\newcommand{\spacedcdot}{{\,\cdot\,}}
\newcommand{\bbA}{{\mathbb{A}}}
\newcommand{\bbQ}{{\mathbb{Q}}}
\newcommand{\bbC}{{\mathbb{C}}}
\newcommand{\bbR}{{\mathbb{R}}}
\newcommand{\bbZ}{{\mathbb{Z}}}
\newcommand{\End}{{\mathrm{End}}}
\DeclareMathOperator{\trace}{{\mathbf{tr}}}
\DeclareMathOperator{\Cl}{{\mathrm{Cl}}}
\DeclareMathOperator{\supp}{{\mathrm{supp}}}
\DeclareMathOperator{\Ad}{{\mathrm{Ad}}}
\DeclareMathOperator{\vol}{{\mathrm{vol}}}
\DeclareMathOperator{\aconv}{{\mathrm{AConv}}}
\newcommand{\calA}{{\mathcal{A}}}
\newcommand{\calC}{{\mathcal{C}}}
\newcommand{\calG}{{\mathcal{G}}}
\newcommand{\calH}{{\mathcal{H}}}
\newcommand{\calO}{{\mathcal{O}}}
\newcommand{\calS}{{\mathcal{S}}}
\newcommand{\calU}{{\mathcal{U}}}
\newcommand{\frakg}{{\mathfrak{g}}}
\def\SL#1{{\mathrm{SL}}_{#1}}
\providecommand{\abs}[1]{\left\lvert#1\right\rvert}
\providecommand{\norm}[1]{\left\lVert#1\right\rVert}
\providecommand{\scal}[2]{\left<#1,#2\right>}
\numberwithin{equation}{section}
\newtheorem{Prop}[equation]{Proposition}
\newtheorem{Lem}[equation]{Lemma}
\newtheorem{Thm}[equation]{Theorem}
\newtheorem{Cor}[equation]{Corollary}
\theoremstyle{definition}
\newtheorem{Def}[equation]{Definition}
\theoremstyle{remark}
\newtheorem{Rem}[equation]{Remark}
\title{ On the Schwartz Space $ \calS(G({\MakeLowercase k})\backslash G(\bbA)) $ }
\author{Goran Mui\'c and Sonja \v Zunar}
\address{ Department of Mathematics,
	Faculty of Science,
	University of Zagreb,
	Bijeni\v cka 30,
	10000 Zagreb,
	Croatia}
\email{gmuic@math.hr}
\email{szunar@math.hr}
\subjclass[2010]{11F70, 22E50}
\thanks{The authors acknowledge Croatian Science Foundation grant No.~3628.}
\keywords{Schwartz space, automorphic forms}
\begin{document}
\maketitle

\begin{abstract}
	For a connected reductive group $ G $ defined over a number field $ k $, we construct the Schwartz space $ \calS(G(k)\backslash G(\bbA)) $. This space is an adelic version of Casselman's Schwartz space $ \calS(\Gamma\backslash G_\infty) $, where $ \Gamma $ is a discrete subgroup of $ G_\infty:=\prod_{v\in V_\infty}G(k_v) $. We study the space of tempered distributions $ \calS(G(k)\backslash G(\bbA))' $ and investigate applications to automorphic forms on $ G(\bbA) $. In particular, we study the representation $ \left(r',\calS(G(k)\backslash G(\bbA))'\right) $ contragredient to the right regular representation $ (r,\calS(G(k)\backslash G(\bbA))) $ of $ G(\bbA) $ and describe the closed irreducible admissible subrepresentations of $ \calS(G(k)\backslash G(\bbA))' $ assuming that $ G $ is semisimple.
\end{abstract}

\section{Introduction}

Let $ G_\infty $ be the group of $ \bbR $-rational points of a reductive group defined over $ \bbQ $, and let $ \Gamma $ be a discrete subgroup of $ G_\infty $. In \cite{casselman}, Casselman introduced the Schwartz space $ \calS(\Gamma\backslash G_\infty) $ and used it to study the cohomology of arithmetic subgroups. The right regular representation $ r_\Gamma $ of $ G_\infty $ on $ \calS(\Gamma\backslash G_\infty) $ is a smooth Fr\'echet representation of moderate growth, and the contragredient representation $ r_\Gamma' $ on the strong dual $ \calS(\Gamma\backslash G_\infty)' $ has the following remarkable property: the G\aa rding subspace of $ \calS(\Gamma\backslash G_\infty)' $ is the space $ \calA_{umg}(\Gamma\backslash G_\infty) $ of functions in $ C^\infty(\Gamma\backslash G_\infty) $ of uniformly moderate growth \cite[Theorem 1.16]{casselman}. Thus, the standardly defined $ K_\infty $-finite automorphic forms $ \varphi\in\calA(\Gamma\backslash G_\infty) $ (see \eqref{eq:098}; cf.\ \cite[\S1]{BJ}) are simply the $ Z(\frakg_\infty) $-finite and $ K_\infty $-finite vectors in the G\aa rding subspace of $ \calS(\Gamma\backslash G_\infty)' $. Here $ K_\infty $ is a maximal compact subgroup of $ G_\infty $, and $ Z(\frakg_\infty) $ is the center of the universal enveloping algebra of the complexified Lie algebra of $ G_\infty $. 

\vskip .2in
 
In this paper, we extend Casselman's construction of the Schwartz space $ \calS(\Gamma\backslash G_\infty) $ to the adelic setting. More precisely, let $ G $ be a (Zariski) connected reductive group defined over a number field $ k $. Let $ V_\infty $ (resp., $ V_f $) be the set of archimedean (resp., non-trivial non-archimedean) places of $ k $. Let $ k_v $ be the completion of $ k $ at a place $ v\in V_\infty\cup V_f $, and let $ \bbA $ (resp., $ \bbA_f $) denote the ring of adeles (resp., finite adeles) of $ k $. We embed the group $ G(k) $ diagonally into the groups $ G_\infty:=\prod_{v\in V_\infty}G(k_v) $, $ G(\bbA_f)=\prod_{v\in V_f}'G(k_v) $ and $ G(\bbA)=G_\infty\times G(\bbA_f) $. Here $ \prod'_{v\in V_f} $ denotes the restricted product, over all $ v\in V_f $, with respect to certain open compact subgroups $ K_v $ of $ G(k_v) $. For an open compact subgroup $ L $ of $ G(\bbA_f) $ and $ c\in G(\bbA_f) $, let $ \Gamma_{c,L} $ be the congruence subgroup of $ G(k) $ that, embedded diagonally into $ G(\bbA_f) $, equals $ G(k)\cap cLc^{-1} $. In Section \ref{sec:104}, we construct a Fr\'echet space $ \calS^L\subseteq C^\infty(G(k)\backslash G(\bbA)) $ with the following properties: 
\begin{enumerate}[label=\textup{(\arabic*)},leftmargin=*,align=left]
	\item Every $ f\in\calS^L $ is $ L $-invariant on the right.
	\item\label{enum:105:2} For every (finite) set $ C\subseteq G(\bbA_f) $ such that $ G(\bbA_f)=\bigsqcup_{c\in C}G(k)cL $, $ \calS^L $ is isomorphic to the direct sum of Fr\'echet spaces
	\[ \bigoplus_{c\in C}\calS(\Gamma_{c,L}\backslash G_\infty) \]
	via the map $ f\mapsto \left(f(\spacedcdot, c)\right)_{c\in C} $.
	\item\label{enum:105:3} If $ L'\subseteq L $, then $ \calS^{L} $ is a closed subspace of $ \calS^{L'} $.	
\end{enumerate}
The decomposition \ref{enum:105:2} is analogous to the well-known decomposition \cite[\S4.3(2)]{BJ} of spaces of adelic automorphic forms, and it allows for a simple adelic reformulation of the above-mentioned Casselman's results (see Lemma \hyperref[lem:023:1]{\ref*{lem:023}\ref*{lem:023:1}} and Corollary \ref{cor:073}). 

\vskip .2in

The property \ref{enum:105:3}  enables us to define the Schwartz space $ \calS=\calS(G(k)\backslash G(\bbA)) $ as the strict inductive limit of Fr\'echet spaces $ \calS^L $. In other words, we put 
\[ \calS:=\bigcup_{L}\calS^L \]
and equip $ \calS $ with the finest locally convex topology with respect to which the inclusion maps $ \calS^L\hookrightarrow\calS $ are continuous. The space $ \calS $ is not a Fr\'echet space; however, it is an LF-space, i.e., a strict inductive limit of an increasing sequence of Fr\'echet subspaces (see Definition \ref{def:045}). In particular, $ \calS $ is a complete locally convex (Hausdorff) topological vector space, and we have the following description of bounded sets in $ \calS $ (see Definition \ref{def:017}): a subset $ B $ of $ \calS $ is bounded in $ \calS $ if and only if $ B $ is a bounded subset of $ \calS^L $ for some $ L $ (Lemma \hyperref[lem:041:3]{\ref*{lem:041}\ref*{lem:041:3}}). This description clarifies the structure of the strong dual $ \calS' $---the space of continuous linear functionals $ \calS\to\bbC $ equipped with the topology of uniform convergence on bounded sets in $ \calS $. By analogy with Casselman's terminology in \cite{casselman}, we call $ \calS' $ the space of tempered distributions on $ G(k)\backslash G(\bbA) $.

\vskip .2in

The space $ \calS' $ is our main object of interest. It can be identified with the projective limit of the spaces $ \left(\calS^L\right)' $, as described in Lemma \ref{lem:107}. The right regular representation $ (r,\calS) $ of $ G(\bbA) $ and its contragredient representation $ \left(r',\calS'\right) $ are continuous representations of $ G(\bbA) $ and smooth representations of $ G_\infty $ (Propositions \hyperref[prop:016:1]{\ref*{prop:016}} and \hyperref[prop:044:1]{\ref*{prop:044}}). 

\vskip .2in

In Section \ref{sec:152}, we recall some classical results on the action of Hecke algebra $ \calH(G(\bbA_f)) $ on $ V $, where $ (\pi,V) $ is a continuous representation of $ G(\bbA) $ on a complete complex locally convex topological vector space $ V $. Using these results, we study the subspaces of $ L $-invariants in $ \left(r,\calS\right) $ and $ \left(r',\calS'\right) $, where $ L $ is an open compact subgroup of $ G(\bbA_f) $ (see Lemmas \hyperref[lem:138:1]{\ref*{lem:138}} and \ref{lem:144}). 

\vskip .2in

In Section \ref{sec:117}, we use results of Sections \ref{sec:104} and \ref{sec:152} to prove that the G\aa rding subspace of $ \calS' $ coincides with the space $ \calA_{umg}(G(k)\backslash G(\bbA)) $ of functions in $ C^\infty(G(k)\backslash G(\bbA)) $ of uniformly moderate growth (see \eqref{eq:119} and \eqref{eq:054}). This is the adelic version (Theorem \hyperref[thm:145:2]{\ref*{thm:145}\ref*{thm:145:2}}) of the above-mentioned Casselman's result \cite[Theorem 1.16]{casselman}. Here the G\aa rding subspace of a continuous representation $ \pi $ of $ G(\bbA) $ on a complete complex locally convex topological vector space $ V $ is defined by analogy with the classical definition (see \eqref{eq:086}) of the G\aa rding subspace for representations of Lie groups: we put
\[ V_{G(\bbA)\text{-G\aa rding}}:=\mathrm{span}_\bbC\left\{\pi(\alpha)v:\alpha\in C_c^\infty(G_\infty)\otimes C_c^\infty(G(\bbA_f)),\ v\in V\right\}, \]
where the convolution algebra $ C_c(G(\bbA)) $ and its subalgebra $ C_c^\infty(G_\infty)\otimes C_c^\infty(G(\bbA_f)) $ act on $ V $ in the following standard way:
\[ \pi(\alpha)v:=\int_{G(\bbA)}\alpha(x)\,\pi(x)v\,dx,\qquad\alpha\in C_c(G(\bbA)),\ v\in V, \]
where $ dx $ is a Haar measure on $ G(\bbA) $. As in the classical situation, the G\aa rding subspace $ V_{G(\bbA)\text{-G\aa rding}} $ is dense in $ V $ (see the discussion after \eqref{eq:087}), and it is contained in the subspace $ V^\infty $ of $ G_\infty $- and $ G(\bbA_f) $-smooth vectors in $ V $ (Corollary \hyperref[cor:141:1]{\ref*{cor:141}\ref*{cor:141:1}}); in particular, $ V^\infty $ is dense in $ V $.

\vskip .2in

In Section \ref{sec:131}, we assume that $ G $ is semisimple and, under this assumption, describe closed irreducible admissible subrepresentations of $ \left(r',\calS'\right) $: in Theorem \ref{thm:126} we prove that they are the closures in $ \calS' $ of irreducible (admissible) $ (\frakg_\infty,K_\infty)\times G(\bbA_f) $-submodules of $ \calA(G(k)\backslash G(\bbA)) $. Here the notion of admissibility is defined classically (Definitions \ref{def:152} and \ref{def:129}; cf.~\cite{flath} and \cite[\S4]{BJ}). In relation to this, we note that the closed irreducible admissible subrepresentations of Casselman's Schwartz space $ \calS(\Gamma\backslash G_\infty)' $ were studied in \cite{muicRadHAZU} and \cite{muicMathZ}. 

\vskip .2in

In Sections \ref{sec:107} and \ref{sec:108}, we turn our attention to Poincar\'e series on $ G(\bbA) $.
In Section \ref{sec:107}, we construct an LF-space $ \calS(G(\bbA)) $ such that the linear operator $ P_{G(k)}:\calS(G(\bbA))\to\calS $,
\[ P_{G(k)}f:=\sum_{\delta\in G(k)}f(\delta\spacedcdot), \]
is well-defined, continuous and surjective (Theorem \ref{thm:043}). The proof of Theorem \ref{thm:043} is based on an analogous result \cite[Proposition 1.11 and Theorem 2.2]{casselman} for the operator $ P_\Gamma:\calS(G_\infty)\to\calS(\Gamma\backslash G_\infty) $, where $ \calS(G_\infty):=\calS(\left\{1_{G_\infty}\right\}\backslash G_\infty) $ and $ \Gamma $ is an arithmetic subgroup. 

\vskip .2in

In Section \ref{sec:108}, we use results of previous sections to study the Poincar\'e series $ P_{G(k)}\varphi:=\sum_{\delta\in G(k)}\varphi(\delta\spacedcdot) $ of functions $ \varphi\in L^1(G(\bbA)) $. The main results of this section---Propositions \ref{prop:090} and \ref{prop:091}---may be regarded as the adelic version of \cite[Proposition 6.4]{muicRadHAZU}. Let us describe them in more detail.

\vskip .2in

It is well-known that for every $ \varphi\in L^1(G(\bbA)) $, the series $ P_{G(k)}\varphi $ converges absolutely almost everywhere on $ G(\bbA) $ and defines an element of $ L^1(G(k)\backslash G(\bbA)) $ (e.g., see \cite[\S4]{muicMathAnn}). The functions $ \varphi(\delta\spacedcdot) $, $ \delta\in G(k) $, may be regarded as elements of the strong dual $ \calS(G(\bbA))' $ in a standard way (see \eqref{eq:109}), and in Proposition \ref{prop:091} we study the convergence of the series $ P_{G(k)}\varphi $ in $ \calS(G(\bbA))' $. 

\vskip .2in

Next, let $ r_{L^1(G(\bbA))} $ be the right regular representation of $ G(\bbA) $ on $ L^1(G(\bbA)) $. In Proposition \ref{prop:090}, we use $ r_{L^1(G(\bbA))} $ to describe a construction of adelic automorphic forms using Poincar\'e series. More precisely, we prove that for every $ \varphi\in L^1(G(\bbA))_{G(\bbA)\text{-G\aa rding}} $, the function $ P_{G(k)}\varphi\in L^1(G(k)\backslash G(\bbA)) $ coincides almost everywhere with an element of $ \calA_{umg}(G(k)\backslash G(\bbA)) $. This means that if $ \varphi $ is additionally a $ K_\infty $-finite and $ Z(\frakg_\infty) $-finite vector in $ L^1(G(\bbA)) $, then $ P_{G(k)}\varphi $ is a $ K_\infty $-finite automorphic form on $ G(\bbA) $. Let us mention that the question when the constructed function $ P_{G(k)}\varphi\in\calA_{umg}(G(k)\backslash G(\bbA)) $ vanishes identically is non-trivial. It may be studied using the integral non-vanishing criterion \cite[Theorem 4-1]{muicMathAnn} for Poincar\'e series on unimodular locally compact Hausdorff groups. This criterion was strengthened in \cite[Lemma 2-1]{muicIJNT} and \cite[Theorem 1]{zunarRama}, and it found applications to various classes of cuspidal automorphic forms on $ \SL2(\bbR) $ (resp., on the metaplectic cover of $ \SL2(\bbR) $) in \cite{muicJNT}, \cite{muicIJNT} and \cite{muicLFunk} (resp., in \cite{zunarGlas}, \cite{zunarManu} and \cite{zunarRama}).

\vskip .2in

We end the paper by an appendix in which we collect some well-known facts from functional analysis used in the paper. 

\vskip .2in

In future work, we will study applications to compactly supported Poincar\'e series related to results of \cite{moyMuicTAMS} and \cite{muicCompositio}. We expect our results to find various interesting applications, e.g.\ in cohomology theory, especially to the Eisenstein cohomology of adelic reductive groups (see \cite{grbac}, \cite{grobner}, \cite{ghl}, \cite{liSchwermer} and \cite{schwermer}).

\vskip .2in

We would like to thank H.~Grobner for enlightening discussions on analytic theory of automorphic forms \cite{grobnernotes}. In particular, the second author would like to thank H.~Grobner and the University of Vienna for their hospitality during her visit in May 2019.

\section{Preliminaries}\label{sec:023}                                                                                                                                   

Let $ G $ be a (Zariski) connected reductive group defined over a number field $ k $.
Let $ V_\infty $ (resp., $ V_f $) be the set of archimedean (resp., non-trivial non-archimedean) places of $ k $. Let
$ k_v $ be the completion of $ k $ at a place $ v\in V_\infty\cup V_f $, and let $ \calO_v $ be the ring of integers of $ k_v $. 

\vskip .2in

The group $ G_\infty:=\prod_{v\in V_\infty}G(k_v) $ is a reductive Lie group. Let $ \frakg_\infty $ be the Lie algebra of $G_\infty$, let $ U(\frakg_\infty) $ be the universal enveloping algebra of the complexification of $ \frakg_\infty $, and let $ Z(\frakg_\infty) $ be the center of $ U(\frakg_\infty) $. We fix a maximal compact subgroup $ K_\infty $ of $ G_\infty $.

\vskip .2in 

By a norm (see \cite[\S1.2]{BJ}) on $ G_\infty $ we mean a function $ \norm\spacedcdot:G_\infty\to\bbR_{\geq1} $ of the form

\[
\norm x=\trace{\sigma^*(x)\sigma(x)}+\trace{\sigma^*(x^{-1})\sigma(x^{-1})},
\]
where $ \sigma:G_\infty\to GL(E) $ is a continuous representation of $ G_\infty $ with finite kernel on a finite-dimensional complex Hilbert space $ E $
such that $ \sigma\big|_{K_\infty} $ is unitary. Here $ ^* $ denotes the adjoint on $ \End(E) $ with respect to the Hilbert space structure on $ E $. The following properties of
$ \norm\spacedcdot $ are obvious:

\begin{enumerate}[label=\textup{(N\arabic*)},leftmargin=*,align=left]
	\item\label{enum:001:1} $ \norm{x^{-1}}=\norm x $ for all $ x\in G_\infty $.
	\item\label{enum:001:2} $ \norm{xy}\leq\norm x\norm y $ for all $ x,y\in G_\infty $.
	\item\label{enum:001:3} $ \norm{k_1xk_2}=\norm x $ for all $ k_1,k_2\in K_\infty $ and $ x\in G_\infty $.
\end{enumerate}

\vskip .2in

Any two norms $ \norm\spacedcdot_1 $ and $ \norm\spacedcdot_2 $ on $ G_\infty $ are equivalent in the sense that there exist $ M,r\in\bbR_{>0} $ such that $ \norm x_1\leq M\norm x_2^r $ for all
$ x\in G_\infty $. In the following, we fix a norm $ \norm\spacedcdot $ on $ G_\infty $.

\vskip .2in

By \cite[Lemma 1.10]{casselman}, we have the following lemma.

\vskip .2in

\begin{Lem}\label{lem:099}
	Let $ \Gamma $ be a discrete subgroup of $ G_\infty $. Then, there exist $ N,M\in\bbR_{>0} $ such that 
	\[ \sum_{\gamma\in\Gamma}\norm{\gamma x}^{-N}\leq M,\qquad x\in G_\infty. \]
\end{Lem}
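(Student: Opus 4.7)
The plan is to prove the lemma by a standard volume-packing argument that reduces the uniform series bound to the finiteness of a single integral $\int_{G_\infty}\norm v^{-N}\,dv$. First, exploiting the discreteness of $\Gamma$ in $G_\infty$, I would pick a relatively compact open neighborhood $U$ of $1_{G_\infty}$ satisfying $\Gamma\cap U^{-1}U=\{1_{G_\infty}\}$. This guarantees that the sets $U\gamma$, $\gamma\in\Gamma$, are pairwise disjoint in $G_\infty$, and hence so are their right-translates $U\gamma x$ for any fixed $x\in G_\infty$. Setting $C_0:=\sup_{u\in\overline U}\norm u$, which is finite by continuity of $\norm\spacedcdot$ and compactness of $\overline U$, property (N2) yields
\[ \norm{u\gamma x}\leq C_0\norm{\gamma x},\qquad\text{hence}\qquad\norm{\gamma x}^{-N}\leq C_0^N\norm{u\gamma x}^{-N}, \]
for all $u\in U$, $\gamma\in\Gamma$ and $x\in G_\infty$.

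Integrating this pointwise inequality in $u$ over $U$ against a Haar measure $du$ on the unimodular reductive group $G_\infty$, summing over $\gamma\in\Gamma$, and using right-invariance of $du$ together with the established disjointness, I would obtain
\[ \vol(U)\sum_{\gamma\in\Gamma}\norm{\gamma x}^{-N}\leq C_0^N\sum_{\gamma\in\Gamma}\int_{U\gamma x}\norm v^{-N}\,dv\leq C_0^N\int_{G_\infty}\norm v^{-N}\,dv. \]
Thus $M:=C_0^N\vol(U)^{-1}\int_{G_\infty}\norm v^{-N}\,dv$ would serve as the desired uniform constant, provided this last integral is finite for a suitable choice of $N$.

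The remaining step---and the main technical point---is therefore to establish the integrability of $\norm v^{-N}$ on $G_\infty$ for all $N$ sufficiently large. I would handle this via the Cartan (polar) decomposition $G_\infty=K_\infty A^+ K_\infty$, together with its integration formula $dg=J(a)\,dk_1\,da\,dk_2$ in which the Jacobian $J$ grows at most exponentially on $A^+$ in the logarithmic coordinates. Using property (N3) together with the fact that the defining representation $\sigma$ of $\norm\spacedcdot$ has finite kernel, one shows that $\norm a$ itself grows at least exponentially on $A^+$, so taking $N$ large enough renders the integral absolutely convergent. This integrability is the only genuinely nontrivial ingredient; the volume-packing step above is what converts the resulting pointwise estimate into a bound uniform in $x\in G_\infty$.
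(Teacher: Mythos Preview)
Your argument is correct and is precisely the classical volume-packing proof. Note, however, that the paper does not supply its own proof of this lemma: it is stated with the attribution ``By \cite[Lemma~1.10]{casselman}, we have the following lemma,'' so there is no in-paper proof to compare against. Your proposal is essentially a reconstruction of Casselman's argument: the disjoint-translates trick reducing the sum to $\int_{G_\infty}\norm v^{-N}\,dv$, followed by the Cartan-decomposition estimate for the latter integral, is exactly the route taken there.
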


\vskip .2in 

Next, following Casselman \cite{casselman}, for every discrete subgroup $ \Gamma $ of $ G_\infty $ we define the function $ \norm\spacedcdot_{\Gamma\backslash G_\infty}:G_\infty\to\bbR_{\geq1} $,

\[ \norm x_{\Gamma\backslash G_\infty}:=\inf_{\gamma\in\Gamma}\norm{\gamma x}. \]

\vskip .2in

We note two simple consequences of \ref{enum:001:1} and \ref{enum:001:2} in the following lemma.

\vskip .2in

\begin{Lem}\label{lem:005}
	Let $ \Gamma'\subseteq\Gamma $ be discrete subgroups of $ G_\infty $. Let $ S $ be a complete set of left coset representatives for $ \Gamma' $ in $ \Gamma $, i.e., $ \Gamma=\bigsqcup_{\delta\in S}\delta\Gamma' $. Then, we have the following:
	\begin{enumerate}[label=\textup{(\arabic*)},leftmargin=*,align=left]
		\item\label{lem:005:1} $ \norm x_{\Gamma\backslash G_\infty}\leq\norm x_{\Gamma'\backslash G_\infty}\leq\left(\sup_{\delta\in S}\norm\delta\right)\norm x_{\Gamma\backslash G_\infty} $ for all $ x\in G_\infty $.
		\item\label{lem:005:2} $ \norm x_{g\Gamma g^{-1}\backslash G_\infty}\leq\norm g\norm{g^{-1}x}_{\Gamma\backslash G_\infty} $ for all $ x,g\in G_\infty $.
	\end{enumerate}
\end{Lem}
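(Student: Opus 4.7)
The plan is to derive both inequalities directly from the norm properties \ref{enum:001:1} and \ref{enum:001:2}, with no need for the discreteness of $\Gamma$ or Lemma \ref{lem:099}.

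For part \ref{lem:005:1}, the first inequality $\norm x_{\Gamma\backslash G_\infty}\leq\norm x_{\Gamma'\backslash G_\infty}$ is immediate: since $\Gamma'\subseteq\Gamma$, the infimum defining $\norm x_{\Gamma\backslash G_\infty}$ is taken over a larger set. For the second inequality, I would fix $\gamma\in\Gamma$ and use the coset decomposition to write $\gamma=\delta\gamma'$ with $\delta\in S$, $\gamma'\in\Gamma'$, so that $\gamma'=\delta^{-1}\gamma$. Then by \ref{enum:001:2} and \ref{enum:001:1},
\[ \norm{\gamma' x}=\norm{\delta^{-1}\gamma x}\leq\norm{\delta^{-1}}\norm{\gamma x}=\norm\delta\norm{\gamma x}\leq\Bigl(\sup_{\delta\in S}\norm\delta\Bigr)\norm{\gamma x}. \]
Since $\gamma'\in\Gamma'$, the left-hand side bounds $\norm x_{\Gamma'\backslash G_\infty}$ from above, and taking the infimum over $\gamma\in\Gamma$ on the right-hand side yields the required inequality.

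For part \ref{lem:005:2}, I would simply parametrize the coset space $g\Gamma g^{-1}\backslash G_\infty$ directly: every element of $g\Gamma g^{-1}$ has the form $g\gamma g^{-1}$ with $\gamma\in\Gamma$, and \ref{enum:001:2} gives
\[ \norm{(g\gamma g^{-1})x}=\norm{g(\gamma g^{-1}x)}\leq\norm g\,\norm{\gamma g^{-1}x}. \]
Taking the infimum over $\gamma\in\Gamma$ yields $\norm x_{g\Gamma g^{-1}\backslash G_\infty}\leq\norm g\,\norm{g^{-1}x}_{\Gamma\backslash G_\infty}$.

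There is no real obstacle here; both statements are essentially formal manipulations with the submultiplicativity and inversion-invariance of the norm. The only subtlety worth flagging is making sure that in part \ref{lem:005:1} one decomposes $\gamma$ (rather than $\gamma'$) using the representatives $S$, so that one obtains an upper bound on $\norm{\gamma'x}$ for some $\gamma'\in\Gamma'$ in terms of $\norm{\gamma x}$, not the other way around.
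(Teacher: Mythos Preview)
Your proof is correct and follows exactly the approach indicated in the paper, which simply states that both claims are ``simple consequences of \ref{enum:001:1} and \ref{enum:001:2}'' without giving further details. Your write-up fills in precisely those details.
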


\vskip .2in

Still following Casselman \cite{casselman}, we say that a smooth function $ f:G_\infty\to\bbC $ is of uniformly moderate growth if there exists $ r\in\bbR_{>0} $ such that for every left-invariant differential operator $ u\in U(\frakg_\infty) $ there exists $ M_u\in\bbR_{>0} $ such that
\begin{equation}\label{eq:078}
\abs{(uf)(x)}\leq M_u\norm x^r,\qquad x\in G_\infty.
\end{equation}
Similarly, we say that a continuous representation $ \pi $ of $ G_\infty $ on a locally convex topological vector space $ V $ is of moderate growth if for every continuous seminorm $ \rho $ on $ V $ there exist $ r\in\bbR_{>0} $ and a continuous seminorm $ \nu $ on $ V $ such that
\begin{equation}\label{eq:082}
\norm{\pi(x)v}_\rho\leq \norm x^r\norm v_\nu,\qquad x\in G_\infty,\ v\in V. 
\end{equation}
Here and throughout the paper, all vector spaces are assumed to be complex. All locally convex topological vector spaces are assumed to be Hausdorff.

\vskip .2in

Next, let $ \bbA $ (resp., $ \bbA_f $) be the ring of adeles (resp., of finite adeles) of $ k $. We recall that there exists a finite set $ S_0\subseteq V_f $ such that for all places $ v\in V_f\setminus S_0 $, $ G $ is defined over $ \calO_v $ and $ G(\calO_v) $ is a hyperspecial maximal compact subgroup of $ G(k_v) $ \cite[3.9.1]{tits}. For $ v\in V_f\setminus S_0 $, let $ K_v:=G(\calO_v) $.
We have
\[ G(\bbA_f)={\prod_{v\in V_f}}'G(k_v), \]
where $ \prod_{v\in V_f}' $ is the restricted product over all $ v\in V_f $ with respect to the subgroups $ K_v $ of $ G(k_v) $, $ v\in V_f\setminus S_0 $. We identify the group $ G(\bbA_f) $ (resp., $ G_\infty $) with its image under the canonical inclusion into
\[ G(\bbA)=G_\infty\times G(\bbA_f). \]

\vskip .2in

The group $ G(k) $ embeds diagonally into $ G(\bbA) $ as a discrete subgroup of finite covolume. It also embeds diagonally into $ G_\infty $ and $ G(\bbA_f) $. 
Next, by intersecting the group $ G(k) $ (embedded diagonally into $ G(\bbA_f) $) with an open compact subgroup $ L $ of $ G(\bbA_f) $, we obtain a subgroup $ \Gamma_L $ of $ G(k) $ which is called a congruence subgroup of $ G(k) $. For simplicity, the images of $ \Gamma_L $ under the diagonal embeddings $ G(k)\hookrightarrow G_\infty $, $ G(k)\hookrightarrow G(\bbA_f) $ and $ G(k)\hookrightarrow G(\bbA) $ will also be denoted by $ \Gamma_L $. Embedded into  $ G_\infty $, $ \Gamma_L $ is a discrete subgroup of $ G_\infty $. We write 
\begin{equation}\label{eq:113}
\Gamma_{c,L}:=\Gamma_{cLc^{-1}},\qquad c\in G(\bbA_f).
\end{equation}

\vskip .2in

Throughout the paper we will use the following lemma.

\vskip .2in

\begin{Lem}
	Let $ L $ and $ L' $ be open compact subgroups of $ G(\bbA_f) $, and let $ c,c'\in G(\bbA_f) $. Then, there exists $ M_{c,L,c',L'}\in\bbR_{>0} $ such that
	\begin{equation}\label{eq:039}
	\norm x_{\Gamma_{c,L}\backslash G_\infty}\leq M_{c,L,c',L'}\norm x_{\Gamma_{c',L'}\backslash G_\infty},\qquad x\in G_\infty.
	\end{equation}
\end{Lem}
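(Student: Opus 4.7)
The plan is to reduce the comparison of $\|\cdot\|_{\Gamma_{c,L}\backslash G_\infty}$ and $\|\cdot\|_{\Gamma_{c',L'}\backslash G_\infty}$ to an application of Lemma \ref{lem:005}\ref{lem:005:1} via a common, smaller congruence subgroup. First I would absorb the conjugations into the compact subgroups by setting $L_c:=cLc^{-1}$ and $L'_{c'}:=c'L'c'^{-1}$, both still open compact subgroups of $G(\bbA_f)$, so that $\Gamma_{c,L}=\Gamma_{L_c}$ and $\Gamma_{c',L'}=\Gamma_{L'_{c'}}$.

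Next, set $L_0:=L_c\cap L'_{c'}$, which is again an open compact subgroup of $G(\bbA_f)$, and consider the corresponding congruence subgroup $\Gamma_{L_0}=\Gamma_{L_c}\cap\Gamma_{L'_{c'}}$ sitting inside both $\Gamma_{L_c}$ and $\Gamma_{L'_{c'}}$. The crucial point is that $\Gamma_{L_0}$ has finite index in each: indeed, the natural map $\Gamma_{L'_{c'}}/\Gamma_{L_0}\to L'_{c'}/L_0$ is well-defined and injective because $L_0\cap G(k)=\Gamma_{L_0}$, and $[L'_{c'}:L_0]<\infty$ since $L_0$ is an open subgroup of the compact group $L'_{c'}$.

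Fix a (finite) set $S'\subseteq\Gamma_{L'_{c'}}$ of left coset representatives for $\Gamma_{L_0}$ in $\Gamma_{L'_{c'}}$. Applying Lemma \ref{lem:005}\ref{lem:005:1} to the pair $\Gamma_{L_0}\subseteq\Gamma_{L'_{c'}}$ yields
\[
\norm x_{\Gamma_{L_0}\backslash G_\infty}\leq\bigl(\sup_{\delta\in S'}\norm\delta\bigr)\norm x_{\Gamma_{L'_{c'}}\backslash G_\infty},\qquad x\in G_\infty.
\]
On the other hand, since $\Gamma_{L_0}\subseteq\Gamma_{L_c}$, the inf in the definition of $\norm x_{\Gamma_{L_0}\backslash G_\infty}$ runs over a smaller set, so trivially $\norm x_{\Gamma_{L_c}\backslash G_\infty}\leq\norm x_{\Gamma_{L_0}\backslash G_\infty}$. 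Chaining the two inequalities gives \eqref{eq:039} with
\[
M_{c,L,c',L'}:=\sup_{\delta\in S'}\norm\delta,
\]
which is finite because $S'$ is finite.

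I expect the calculation itself to be entirely routine once the reduction is in place; the only point that needs care is the finite-index assertion for $\Gamma_{L_0}\subseteq\Gamma_{L'_{c'}}$, which is the standard commensurability of congruence subgroups but should be written out explicitly to make the proof self-contained.
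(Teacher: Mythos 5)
Your proof is correct and follows essentially the same route as the paper: the paper simply notes that $cLc^{-1}$ and $c'L'c'^{-1}$ are mutually commensurable, hence so are $\Gamma_{c,L}$ and $\Gamma_{c',L'}$, and then invokes Lemma \hyperref[lem:005:1]{\ref*{lem:005}\ref*{lem:005:1}}. You have merely written out explicitly the commensurability argument (via the intersection $L_0$ and the injection of $\Gamma_{c',L'}/\Gamma_{L_0}$ into $c'L'c'^{-1}/L_0$) that the paper leaves implicit, which is fine.
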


\begin{proof}
	Since the subgroups $ cLc^{-1} $ and $ c'L'c'^{-1} $ are mutually commensurable, so are $ \Gamma_{c,L} $ and $ \Gamma_{c',L'} $, hence the claim follows from Lemma  \hyperref[lem:005:1]{\ref*{lem:005}\ref*{lem:005:1}}.
\end{proof}

\vskip .2in

Throughout the paper we use the following notation. We write elements $ x\in G(\bbA) $ in the form $ x=(x_\infty,x_f) $, where $ x_\infty\in G_\infty $ and $ x_f\in G(\bbA_f) $. Moreover, for $ f:G(\bbA)\to\bbC $ and $ c\in G(\bbA_f) $, we define the function $ f_c:G_\infty\to\bbC $,
\[ f_c(x):=f(x,c). \]

\vskip .2in 

Let us recall that a continuous function $ f:G(\bbA)\to\bbC $ is said to be smooth if it has the following two properties:
\begin{enumerate}[label=\textup{(\arabic*)},leftmargin=*,align=left]
	\item For every $ c\in G(\bbA_f) $, $ f_c $ is a smooth function $ G_\infty\to\bbC $.
	\item For every $ x\in G_\infty $, the function $ f(x,\spacedcdot):G(\bbA_f)\to\bbC $ is locally constant.
\end{enumerate}
We will denote the space of smooth functions $ G(\bbA)\to\bbC $ by $ C^\infty(G(\bbA)) $. For $ u\in U(\frakg_\infty) $ and $ f\in C^\infty(G(\bbA)) $, we define the function $ uf:G(\bbA)\to\bbC $ by
\[ (uf)_c:=uf_c,\qquad c\in G(\bbA_f). \]

\vskip .2in

Next, we define the following function spaces:
\begin{align*}
	C(G(\bbA)):=\big\{&f:G(\bbA)\to\bbC:f\text{ is continuous}\big\},\\
	C_c(G(\bbA)):=\big\{&f\in C(G(\bbA)):\supp f\text{ is a compact subset of }G(\bbA)\big\},\\
	C^\infty(G(k)\backslash G(\bbA)):=\big\{&f\in C^\infty(G(\bbA)):f(\delta\spacedcdot)=f\text{ for all }\delta\in G(k)\big\},\\
	C(G(k)\backslash G(\bbA)):=\big\{&f\in C(G(\bbA)):f(\delta\spacedcdot)=f\text{ for all }\delta\in G(k)\big\},\\
	C_c(G(k)\backslash G(\bbA)):=\big\{&f\in C(G(k)\backslash G(\bbA)):\supp f\text{ is a compact subset of }G(k)\backslash G(\bbA)\big\}.
\end{align*}
If $ X $ is one if these spaces and $ L $ is an open compact subgroup of $ G(\bbA_f) $, we denote
\[ X^L:=\left\{f\in X:f(\spacedcdot(1_{G_\infty},l))=f\text{ for all }l\in L\right\}. \]
Analogously to the above definitions of function spaces on $ G(\bbA) $, we define the space $ C_c(G(\bbA_f)) $ as well as the spaces $ C(G_\infty) $, $ C^\infty(G_\infty) $, $ C_c(G_\infty) $, $ C_c^\infty(G_\infty) $, $ C(\Gamma\backslash G_\infty) $, $ C^\infty(\Gamma\backslash G_\infty) $ and $ C_c(\Gamma\backslash G_\infty) $ of functions $ G_\infty\to\bbC $, where $ \Gamma $ is a discrete subgroup of $ G_\infty $. Finally, we define $ C_c^\infty(G(\bbA_f)) $ to be the subspace of locally constant functions in $ C_c(G(\bbA_f)) $. We have
\begin{equation}\label{eq:114}
  C_c^\infty(G(\bbA_f))=\bigcup_{L}\sum_{c\in G(\bbA_f)}\bbC\mathbbm1_{cL}=\bigcup_{L}\sum_{c\in G(\bbA_f)}\bbC\mathbbm1_{LcL}, 
\end{equation}
where $ L $ goes over all open compact subgroups of $ G(\bbA_f) $, and $ \mathbbm1_A $ denotes the characteristic function of $ A\subseteq G(\bbA_f) $.

\vskip .2in

Let us fix Haar measures $ dx_\infty $ on $ G_\infty $ and $ dx_f $ on $ G(\bbA_f) $. The measure $ dx:=dx_\infty\times dx_f $ on $ G(\bbA) $ given by the formula
\begin{equation}\label{eq:110}
\int_{G(\bbA)}f(x)\,dx=\int_{G_\infty}\int_{G(\bbA_f)}f(x_\infty,x_f)\,dx_f\,dx_\infty,\quad f\in C_c(G(\bbA)),
\end{equation}
is a Haar measure on $ G(\bbA) $.

\vskip .2in

For $ \calG\in\left\{G_\infty,G(\bbA_f),G(\bbA)\right\} $, the space $ C_c(\calG) $ is a complex associative algebra under the convolution
\begin{equation}\label{eq:070}
(\varphi_1*\varphi_2)(x):=\int_{\calG}\varphi_1\left(xy^{-1}\right)\varphi_2(y)\,dy=\int_{\calG}\varphi_1(y)\,\varphi_2\left(y^{-1}x\right)\,dy, 
\end{equation}
where $ \varphi_1,\varphi_2\in C_c(\calG) $ and $ x\in\calG $.
If $ \pi $ is a continuous representation of $ \calG $ on a complete locally convex topological vector space $ V $, then $ V $ is a left $ C_c(\calG) $-module under the action
\begin{equation}\label{eq:085}
\pi(f)v:=\int_{\calG}f(x)\,\pi(x)v\,dx,\qquad f\in C_c(\calG),\ v\in V,
\end{equation}
(see \cite[\S2]{hc}), where the integral is to be interpreted as a Gelfand-Pettis integral, i.e., its value is determined by the condition
\[ \scal T{\pi(f)v}=\int_{\calG}f(x)\,\scal T{\pi(x)v}\,dx \]
for all continuous linear functionals $ T:V\to\bbC $ (see Theorem \ref{thm:148}).
In the case when $ \calG=G_\infty $, the G\aa rding subspace $ V_{G_\infty\textup{-G\aa rding}} $ of $ V $ is standardly defined by
\begin{equation}\label{eq:086}
V_{G_\infty\textup{-G\aa rding}}:=\mathrm{span}_\bbC\left\{\pi(\psi)v:\psi\in C_c^\infty(G_\infty),\ v\in V\right\}.
\end{equation}
In the case when $ \calG=G(\bbA) $, we define the G\aa rding subspace $ V_{G(\bbA)\textup{-G\aa rding}} $ of $ V $ by analogy as follows:
\begin{equation}\label{eq:087}
V_{G(\bbA)\textup{-G\aa rding}}:=\mathrm{span}_\bbC\left\{\pi(\alpha)v:\alpha\in C_c^\infty(G_\infty)\otimes C_c^\infty(G(\bbA_f)),\ v\in V\right\}.
\end{equation}
In both cases, the G\aa rding subspace is dense in $ V $: Let $ (\psi_n)_{n\in\bbZ_{>0}}\subseteq C_c^\infty(G_\infty) $ be an approximation of unity, i.e., such that $ \psi_n\geq0 $, $ \int_{G_\infty}\psi_n(x)\,dx=1 $, $ \supp\psi_n\searrow\left\{1_{G_\infty}\right\} $, and $ \left\{\supp\psi_n:n\in\bbZ_{>0}\right\} $ is a neighborhood basis of $ 1_{G_\infty} $ in $ G_\infty $. Let $ (L_n)_{n\in\bbZ_{>0}} $ be a decreasing sequence of open compact subgroups of $ G(\bbA_f) $ that constitute a neighborhood basis of $ 1_{G(\bbA_f)} $ in $ G(\bbA_f) $. Then, $ \pi(\psi_n)v\to v $ (resp., $ \pi\left(\psi_n\otimes\frac{\mathbbm1_{L_n}}{\vol L_n}\right)v\to v $) for all $ v\in V $. 

\vskip .2in

Next, for functions $ \varphi:G(\bbA)\to\bbC $ and $ f:G_\infty\to\bbC $, we define the Poincar\'e series
\[ P_{G(k)}\varphi:=\sum_{\delta\in G(k)}\varphi(\delta\spacedcdot)\qquad\text{and}\qquad P_\Gamma f:=\sum_{\gamma\in\Gamma}f(\gamma\spacedcdot), \]
where $ \Gamma $ is a discrete subgroup of $ G_\infty $.
An invariant Radon measure $ dx $ on $ G(k)\backslash G(\bbA) $ is defined by the condition
\begin{equation}\label{eq:099}
\int_{G(k)\backslash G(\bbA)}\left(P_{G(k)}\varphi\right)(x)\,dx=\int_{G(\bbA)}\varphi(x)\,dx,\qquad \varphi\in C_c(G(\bbA)),
\end{equation}
and an invariant Radon measure $ dx_\infty $ on $ \Gamma\backslash G_\infty $ is defined analogously.

\vskip .2in 
  
We end this section by proving a useful integration formula given in \eqref{eq:024}. Let us fix an open compact subgroup $ L $ of $ G(\bbA_f) $. We recall the following well-known fact \cite{borel1963}: there exists a finite set $ C\subseteq G(\bbA_f) $ such that 
\begin{equation}\label{eq:111}
G(\bbA_f)=\bigsqcup_{c\in C}G(k)cL.
\end{equation}
Let us fix such a set $ C $.

\vskip .2in 

\begin{Lem}
	Let $ f\in C(G(k)\backslash G(\bbA))^L $ be non-negative or such that $ \int_{G(k)\backslash G(\bbA)}\abs{f(x)}\,dx<\infty $. Then, we have
	\begin{equation}\label{eq:024}
	\int_{G(k)\backslash G(\bbA)}f(x)\,dx=\vol(L)\,\sum_{c\in C}\int_{\Gamma_{c,L}\backslash G_\infty}f_c(x_\infty)\,dx_\infty.
	\end{equation}
\end{Lem}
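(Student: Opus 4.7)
The plan is to reduce to $f \in C_c(G(k)\backslash G(\bbA))^L$ and then verify the formula by unfolding a Poincar\'e series against \eqref{eq:099}. For the reduction, if $f \geq 0$ one approximates by an increasing sequence in $C_c(G(k)\backslash G(\bbA))^L$ (averaging compactly supported truncations over $L$) and invokes monotone convergence on both sides; if $f \in L^1$, one splits into real and imaginary, positive and negative parts. Given $f \in C_c(G(k)\backslash G(\bbA))^L$, pick any non-negative $\chi \in C_c(G(\bbA))^L$ with $P_{G(k)}\chi > 0$ everywhere (which exists by a standard partition-of-unity argument using discreteness of $G(k)$ in $G(\bbA)$), and define $\varphi := f \cdot \chi / P_{G(k)}\chi \in C_c(G(\bbA))^L$. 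Then $P_{G(k)}\varphi = f$, so by \eqref{eq:099} the left-hand side equals $\int_{G(\bbA)} \varphi(x)\, dx$.

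To compute the right-hand side, expand $f_c(x_\infty) = \sum_{\delta \in G(k)} \varphi(\delta x_\infty, \delta c)$ and partition $G(k) = \bigsqcup_{\gamma \in G(k)/\Gamma_{c,L}} \gamma \Gamma_{c,L}$ (left cosets). For $\delta_1 \in \Gamma_{c,L} = G(k) \cap cLc^{-1}$ one has $\delta_1 c \in cL$, so by right $L$-invariance of $\varphi$,
\[
\varphi(\gamma\delta_1 x_\infty, \gamma\delta_1 c) = \varphi(\gamma\delta_1 x_\infty, \gamma c).
\]
For each fixed $\gamma$ the map $x_\infty \mapsto \sum_{\delta_1 \in \Gamma_{c,L}} \varphi(\gamma\delta_1 x_\infty, \gamma c)$ is $\Gamma_{c,L}$-invariant (left translation by $\Gamma_{c,L}$ permutes the summands), so the standard unfolding together with left-invariance of the Haar measure $dx_\infty$ on $G_\infty$ yields
\[
\int_{\Gamma_{c,L}\backslash G_\infty} \sum_{\delta_1 \in \Gamma_{c,L}} \varphi(\gamma\delta_1 x_\infty, \gamma c)\, dx_\infty = \int_{G_\infty} \varphi(y_\infty, \gamma c)\, dy_\infty.
\]

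Summing over $\gamma \in G(k)/\Gamma_{c,L}$ and $c \in C$, and inserting $\varphi(y_\infty, \gamma c) = \vol(L)^{-1} \int_{\gamma c L} \varphi(y_\infty, x_f)\, dx_f$ (again by right $L$-invariance of $\varphi$ and left-invariance of $dx_f$), the decomposition $G(\bbA_f) = \bigsqcup_{c \in C} \bigsqcup_{\gamma \in G(k)/\Gamma_{c,L}} \gamma c L$ reassembles the terms into a single integral over $G(\bbA_f)$. This produces $\vol(L)^{-1} \int_{G(\bbA)} \varphi(x)\, dx$ by \eqref{eq:110}, and multiplying through by $\vol(L)$ matches the right-hand side of \eqref{eq:024} to the left-hand side. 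The main obstacle is careful bookkeeping of left versus right cosets: one must check that the partition $G(k)cL = \bigsqcup_{\gamma \in G(k)/\Gamma_{c,L}} \gamma c L$ does use left cosets (since $\gamma_1 cL = \gamma_2 cL$ iff $\gamma_1^{-1}\gamma_2 \in cLc^{-1} \cap G(k) = \Gamma_{c,L}$) and that this is the same coset system arising from the splitting of $\sum_{\delta \in G(k)}$ into $\sum_\gamma \sum_{\delta_1}$, so that the unfolding step and the final reassembly step are consistent.
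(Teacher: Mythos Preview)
Your proof is correct and follows essentially the same approach as the paper: reduce to $f\in C_c(G(k)\backslash G(\bbA))^L$, write $f=P_{G(k)}\varphi$ for some $\varphi\in C_c(G(\bbA))^L$, and then match both sides of \eqref{eq:024} to $\int_{G(\bbA)}\varphi(x)\,dx$ via the coset decomposition $G(\bbA_f)=\bigsqcup_{c\in C}\bigsqcup_{\delta\in G(k)/\Gamma_{c,L}}\delta cL$ and unfolding over $\Gamma_{c,L}$. The only cosmetic differences are that the paper obtains $\varphi$ by the classical surjectivity of $P_{G(k)}$ followed by $L$-averaging (rather than your explicit $f\cdot\chi/P_{G(k)}\chi$), and it meets both sides at the intermediate expression $\vol(L)\sum_{c}\sum_{\delta}\int_{G_\infty}\varphi(x_\infty,\delta_fc)\,dx_\infty$ rather than reassembling the right-hand side directly back into $\int_{G(\bbA)}\varphi$.
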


\begin{proof}
	By a standard argument, it suffices to prove \eqref{eq:024} in the case when $ f $ is a non-negative function in $ C_c(G(k)\backslash G(\bbA))^L $. In this case, by the classical theory $ f=P_{G(k)}\varphi $ for some non-negative function $ \varphi\in C_c(G(\bbA)) $. In fact, we can assume that $ \varphi\in C_c(G(\bbA))^L $ (by replacing $ \varphi $ by the function $ \varphi^L(x):=\frac1{\vol L}\int_L\varphi(x_\infty,x_fl)\,dl $). For such $ \varphi $, we have
	\begin{equation}\label{eq:026}
	\begin{aligned}
		\int_{G(k)\backslash G(\bbA)}f(x)\,dx
		&\overset{\phantom{\eqref{eq:099}}}=\int_{G(k)\backslash G(\bbA)}\left(P_{G(k)}\varphi\right)(x)\,dx\\
		&\overset{\eqref{eq:099}}=\int_{G(\bbA)}\varphi(x)\,dx\\
		&\underset{\eqref{eq:111}}{\overset{\eqref{eq:110}}=}\sum_{c\in C}\int_{G(k)cL}\int_{G_\infty}\varphi(x_\infty,x_f)\,dx_\infty\,dx_f\\
		&\overset{\phantom{\eqref{eq:099}}}=\sum_{c\in C}\sum_{\delta\in G(k)/\Gamma_{c,L}}\int_{\delta_fcL}\int_{G_\infty}\varphi(x_\infty,x_f)\,dx_\infty\,dx_f\\
		&\overset{\phantom{\eqref{eq:099}}}=\vol(L)\,\sum_{c\in C}\sum_{\delta\in G(k)/\Gamma_{c,L}}\int_{G_\infty}\varphi(x_\infty,\delta_fc)\,dx_\infty,
	\end{aligned}
	\end{equation}
	where the fourth equality holds by the following elementary equivalence: for all $ c\in G(\bbA_f) $ and $ \delta,\delta'\in G(k) $,
	\[ \delta_fcL=\delta_f'cL\quad\Leftrightarrow\quad\delta\Gamma_{c,L}=\delta'\Gamma_{c,L}. \] 
	Now we have
	\begin{align*}
		\sum_{c\in C}&\int_{\Gamma_{c,L}\backslash G_\infty}f_c(x_\infty)\,dx_\infty\\
		&\overset{\phantom{\eqref{eq:026}}}=\sum_{c\in C}\int_{\Gamma_{c,L}\backslash G_\infty}\left(P_{G(k)}\varphi\right)_c(x_\infty)\,dx_\infty\\
		&\overset{\phantom{\eqref{eq:026}}}=\sum_{c\in C}\int_{\Gamma_{c,L}\backslash G_\infty}\sum_{\delta\in G(k)/\Gamma_{c,L}}\sum_{\gamma\in\Gamma_{c,L}}\varphi\big(\delta_\infty\gamma_\infty x_\infty,\delta_fc\underbrace{c^{-1}\gamma_fc}_{\in L}\big)\,dx_\infty\\
		&\overset{\phantom{\eqref{eq:026}}}=\sum_{c\in C}\sum_{\delta\in G(k)/\Gamma_{c,L}}\int_{\Gamma_{c,L}\backslash G_\infty}P_{\Gamma_{c,L}}\left(\varphi(\delta_\infty\spacedcdot,\delta_fc)\right)(x_\infty)\,dx_\infty\\
		&\overset{\phantom{\eqref{eq:026}}}=\sum_{c\in C}\sum_{\delta\in G(k)/\Gamma_{c,L}}\int_{G_\infty}\varphi(\delta_\infty x_\infty,\delta_fc)\,dx_\infty\\
		&\overset{\phantom{\eqref{eq:026}}}=\sum_{c\in C}\sum_{\delta\in G(k)/\Gamma_{c,L}}\int_{G_\infty}\varphi( x_\infty,\delta_fc)\,dx_\infty\\
		&\overset{\eqref{eq:026}}=\frac1{\vol(L)}\int_{G(k)\backslash G(\bbA)}f(x)\,dx,
	\end{align*}
	where the fifth equality holds by the invariance of the Haar measure $ dx_\infty $ on $ G_\infty $.
	This proves \eqref{eq:024}.
\end{proof}

\section{Adelic reformulation of Casselman's results}\label{sec:104}

Let $ \Gamma $ be a discrete subgroup of $ G_\infty $.
Following Casselman \cite{casselman}, we define the Schwartz space $ \calS(\Gamma\backslash G_\infty) $ to consist of the functions $ f\in C^\infty(\Gamma\backslash G_\infty) $ such that for all left-invariant differential operators $ u\in U(\frakg_\infty) $ and $ n\in\bbZ_{\geq0} $ we have
\[ \norm f_{u,-n,\Gamma}:=\sup_{x\in G_\infty}\abs{(uf)(x)}\norm x_{\Gamma\backslash G_\infty}^n<\infty. \]
We equip $ \calS(\Gamma\backslash G_\infty) $ with the locally convex topology generated by the seminorms $ \norm\spacedcdot_{u,-n,\Gamma} $. By \cite[Proposition 1.8]{casselman}, $ \calS(\Gamma\backslash G_\infty) $ is a Fr\'echet space, and the right regular representation $ r_\Gamma $ of $ G_\infty $ on $ \calS(\Gamma\backslash G_\infty) $ is a smooth representation of moderate growth (see \eqref{eq:082}). 

\vskip .2in

Let $ \calS(\Gamma\backslash G_\infty)' $ be the strong dual of $ \calS(\Gamma\backslash G_\infty) $. We recall that this means that $ \calS(\Gamma\backslash G_\infty)' $ is the space of continuous linear functionals $ \calS(\Gamma\backslash G_\infty)\to\bbC $ equipped with the locally convex topology generated by the seminorms 
\[ \norm T_B:=\sup_{f\in B}\abs{\scal Tf}, \]
where $ B $ goes over all bounded sets in $ \calS(\Gamma\backslash G_\infty) $
(see Definitions \ref{def:017} and \ref{def:018}). By Lemma \ref{lem:019}, $ \calS(\Gamma\backslash G_\infty)' $ is a complete locally convex topological vector space. 

\vskip .2in

To the best of our knowledge, the proof of the following lemma is missing from the literature, so we include a sketch of it here.

\vskip .2in

\begin{Lem}\label{lem:136}
	The contragredient representation $ \left(r_\Gamma',\calS(\Gamma\backslash G_\infty)'\right) $,
	\[ r_\Gamma'(x)T:=T\circ r_\Gamma\left(x\right)^{-1},\qquad x\in G_\infty,\ T\in\calS(\Gamma\backslash G_\infty)', \]
	is a smooth represetation of $ G_\infty $. 
\end{Lem}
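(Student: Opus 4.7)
Write $V := \calS(\Gamma\backslash G_\infty)$ for brevity. The plan is to bootstrap smoothness of $(r_\Gamma',V')$ from that of $(r_\Gamma,V)$, using the moderate growth property of $r_\Gamma$ to upgrade pointwise regularity on $V$ into regularity that is uniform on bounded subsets---equivalently, regularity in the strong dual topology on $V'$. The first step is to verify continuity of $r_\Gamma'$. The key input is that for every compact $K \subseteq G_\infty$, the family $\{r_\Gamma(g): g \in K\}$ is equicontinuous on $V$; this follows from the moderate growth estimate \eqref{eq:082} combined with the Banach--Steinhaus theorem on the Fréchet space $V$. Using this equicontinuity together with the continuity of each orbit map $g \mapsto r_\Gamma(g)f$, one shows that if $g_n \to g_0$ then $r_\Gamma(g_n^{-1})f - r_\Gamma(g_0^{-1})f \to 0$ in $V$ uniformly for $f$ in a bounded set $B \subseteq V$. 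For any $T \in V'$, dominated by $\abs{\scal T h} \leq C\,\rho(h)$ for some continuous seminorm $\rho$ on $V$, this gives $\sup_{f\in B}\abs{\scal{r_\Gamma'(g_n)T - r_\Gamma'(g_0)T}{f}} \to 0$, i.e., continuity of the orbit map $g \mapsto r_\Gamma'(g)T$ in the strong dual topology.

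For smoothness, since $(r_\Gamma,V)$ is smooth, each infinitesimal operator $r_\Gamma(X): V \to V$, $X \in \frakg_\infty$, is well-defined and continuous. The candidate directional derivative of the orbit map is $r_\Gamma'(X)T := -T \circ r_\Gamma(X) \in V'$, which is well-defined by continuity of $r_\Gamma(X)$. To prove actual differentiability, I would use the integrated Taylor identity
\[ r_\Gamma(\exp(-tX))f - f + t\, r_\Gamma(X)f = \int_0^t (t-s)\, r_\Gamma(\exp(-sX))\, r_\Gamma(X)^2 f \, ds, \]
valid for all $f \in V$ because every vector is smooth. Pairing with $T$, equicontinuity of $\{r_\Gamma(\exp(-sX)) : s \in [-1,1]\}$ together with the continuity of $r_\Gamma(X)^2$ bounds the right-hand side by an $O(t^2)$ quantity, uniformly for $f$ in any bounded $B$; hence the difference quotient $\frac{r_\Gamma'(\exp(tX))T - T}{t}$ converges to $r_\Gamma'(X)T$ in the strong topology. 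Since $r_\Gamma'(X)T$ is again a continuous functional of the same shape $-(\spacedcdot) \circ r_\Gamma(X)$, iterating produces all higher directional derivatives as continuous functionals, and a second application of the continuity argument from the first step shows that these derivatives depend continuously on $g$. This yields the smoothness of $g \mapsto r_\Gamma'(g)T$ from $G_\infty$ to $V'$.

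The technical heart of both steps is the uniformity in $f$ over bounded sets, which is what upgrades weak-$*$ statements to strong-dual statements. This uniformity is supplied precisely by the moderate growth hypothesis on $r_\Gamma$; without it the difference quotient above would only converge when paired against individual $f \in V$, and the orbit map would at best be smooth into the weak-$*$ dual, not the strong dual.
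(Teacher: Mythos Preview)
Your overall strategy is sound and the smoothness argument via the integrated Taylor remainder is correct, but one step in the continuity part is stated in a way that does not quite hold. You write that equicontinuity of $\{r_\Gamma(g):g\in K\}$ together with continuity of each orbit map yields $r_\Gamma(g_n^{-1})f-r_\Gamma(g_0^{-1})f\to 0$ \emph{uniformly for $f$ in a bounded set $B$}. In a general Fr\'echet space this inference fails: equicontinuity plus pointwise convergence gives uniform convergence only on precompact sets, not on bounded sets (think of coordinate projections on $\ell^2$). What actually works---and what the paper does---is the same mean-value/Taylor estimate you use later: since every vector is smooth and $r_\Gamma(X)$ is continuous, one has $\rho\big(r_\Gamma(\exp(tX))f-f\big)\le |t|\cdot C\cdot\nu(f)$ for suitable seminorms, and this is bounded uniformly for $f\in B$. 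So the fix is simply to invoke the first-order version of your Taylor identity already at the continuity stage, rather than abstract equicontinuity.

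For smoothness, your route genuinely differs from the paper's. The paper argues indirectly: it shows that the subspace $\big(V'\big)_{G_\infty\text{-smooth}}$ of smooth vectors in $V'$ is closed (because the seminorms $T\mapsto\norm{r_\Gamma'(u)T}_B$ defining its natural topology equal $\norm{T}_{r_\Gamma(u^\#)B}$, and $r_\Gamma(u^\#)B$ is again bounded, so the smooth-vector topology coincides with the relative strong topology) and dense (by the G\aa rding density result of Harish-Chandra), hence equals all of $V'$. Your direct Taylor-expansion proof avoids the density input entirely and produces the derivatives explicitly; the paper's argument is shorter and more conceptual but relies on an external density theorem. Both approaches exploit the same underlying fact---that $r_\Gamma(u)$ maps bounded sets to bounded sets---which is exactly the content of your final paragraph.
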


\begin{proof}
	By the discussion at the beginning of Section \ref{sec:050}, the continuity of the representation $ r_\Gamma' $ is not an immediate consequence of the continuity of $ r_\Gamma $. On the bright side, to prove it, we only need to show that for every $ T\in\calS(\Gamma\backslash G_\infty)' $ the map $ r_\Gamma'(\spacedcdot)T:G_\infty\to\calS(\Gamma\backslash G_\infty)' $ is continuous (at $ 1_{G_\infty} $) (see \eqref{eq:112}). In other words, we need to prove that for every bounded set $ B $ in $ \calS(\Gamma\backslash G_\infty) $ and for every $ T\in\calS(\Gamma\backslash G_\infty)' $, 
	\[ \norm{r_\Gamma'(x)T-T}_B\xrightarrow{x\to 1_{G_\infty}}0. \]
	By the continuity of $ T $, there exist $ m\in\bbZ_{>0} $, $ u_1,\ldots,u_m\in U(\frakg_\infty) $ and $ n_1,\ldots,n_m\in\bbZ_{\geq0} $ such that
	\[ \norm{r_\Gamma'(x)T-T}_B=\sup_{f\in B}\abs{\scal T{r_\Gamma\left(x^{-1}\right)f-f}}\leq\sup_{f\in B}\sum_{i=1}^{m}\norm{r_\Gamma\left(x^{-1}\right)f-f}_{u_i,-n_i,\Gamma} \]
	for all $ x\in G_\infty $. The right-hand side tends to $ 0 $ when $ x\to1_{G_\infty} $ by a standard estimate that uses the definition of seminorms $ \norm\spacedcdot_{u_i,-n_i,\Gamma} $ and the mean value theorem; we leave the details to the reader.
	
	The smoothness of $ r_\Gamma' $ can now be proved as in the proof of Proposition \hyperref[prop:044:2]{\ref*{prop:044}\ref*{prop:044:2}} (see Section \ref{sec:050}).
\end{proof}

\vskip .2in

Still following Casselman \cite{casselman}, as well as \cite{BJ}, \cite{muicRadHAZU} and \cite{muicMathZ}, we define the space 
\begin{equation}\label{eq:154}
\calA_{umg}(\Gamma\backslash G_\infty):=\left\{f\in C^\infty(\Gamma\backslash G_\infty):f\text{ is of uniformly moderate growth}\right\} 
\end{equation}
(see \eqref{eq:078}), its subspace of $ Z(\frakg_\infty) $-finite vectors (smooth automorphic forms for $ \Gamma $) 
\begin{equation}\label{eq:153}
\calA^\infty(\Gamma\backslash G_\infty):=\left\{f\in \calA_{umg}(\Gamma\backslash G_\infty):\dim_\bbC\left\{zf:z\in Z(\frakg_\infty)\right\}<\infty\right\},
\end{equation}
and its subspace of $ K_\infty $-finite vectors (($ K_\infty $-finite) automorphic forms for $ \Gamma $)
\begin{equation}\label{eq:098}
\calA(\Gamma\backslash G_\infty):=\left\{f\in \calA^\infty(\Gamma\backslash G_\infty):\dim_\bbC\mathrm{span}_\bbC\left\{f(\spacedcdot k):k\in K_\infty\right\}<\infty\right\}.
\end{equation}
The spaces $ \calA_{umg}(\Gamma\backslash G_\infty) $ and $ \calA^\infty(\Gamma\backslash G_\infty) $ are $ G_\infty $-modules under the right translations, and $ \calA(\Gamma\backslash G_\infty) $ is a $ (\frakg_\infty,K_\infty) $-module. All these spaces embed canonically into $ \calS(\Gamma\backslash G_\infty)' $ by identifying $ \varphi\in\calA_{umg}(\Gamma\backslash G_\infty) $ with the linear functional $ T_\varphi\in \calS(\Gamma\backslash G_\infty)' $,
\begin{equation}\label{eq:145}
\scal{T_\varphi}f:=\int_{\Gamma\backslash G_\infty}\varphi(x)f(x)\,dx,\qquad f\in\calS(\Gamma\backslash G_\infty).
\end{equation}

\vskip .2in

Adelic analogues of the spaces \eqref{eq:154}--\eqref{eq:098} can be defined in the following standard way (cf.~\cite[\S4]{BJ}). For an open compact subgroup $ L $ of $ G(\bbA_f) $, we define the space
\begin{equation}\label{eq:054}
	\begin{aligned}
		\calA_{umg}(G(k)\backslash G(\bbA))^L:=\big\{&f\in C^\infty(G(k)\backslash G(\bbA))^L:\\
		&f_c\text{ is of uniformly moderate growth for every }c\in G(\bbA_f)\big\},
	\end{aligned}
\end{equation}
its subspace of $ Z(\frakg_\infty) $-finite vectors
\begin{equation}\label{eq:055}
	\calA^\infty(G(k)\backslash G(\bbA))^L:=\big\{f\in \calA_{umg}(G(k)\backslash G(\bbA))^L:\dim_\bbC\left\{zf:z\in Z(\frakg_\infty)\right\}<\infty\big\},
\end{equation}
and its subspace of $ K_\infty $-finite vectors
\begin{equation}\label{eq:056}
	\begin{aligned}
		\calA(G(k)\backslash G(\bbA))^L:=\big\{&f\in\calA^\infty(G(k)\backslash G(\bbA))^L:\\
		&\dim_\bbC\mathrm{span}_\bbC\left\{f\left(\spacedcdot\left(k,1_{G(\bbA_f)}\right)\right):k\in K_\infty\right\}<\infty\big\}.
	\end{aligned}
\end{equation}

\vskip .2in 

Next, we define an adelic analogue of the space $ \calS(\Gamma\backslash G_\infty) $. Let $ L $ be an open compact subgroup of $ G(\bbA_f) $. We define a vector space $ \calS^L=\calS(G(k)\backslash G(\bbA))^L $ to consist of the functions $ f\in C^\infty(G(k)\backslash G(\bbA))^L $ such that
\[ \norm f_{u,-n,c,L}:=\sup_{x\in G_\infty}\abs{(uf)(x,c)}\norm x^n_{\Gamma_{c,L}\backslash G_\infty}<\infty \]
for all $ u\in U(\frakg_\infty) $, $ n\in\bbZ_{\geq0} $ and $ c\in G(\bbA_f) $, where the group $ \Gamma_{c,L} $ is defined by \eqref{eq:113}.
Equipped with the locally convex topology generated by the seminorms $ \norm\spacedcdot_{u,-n,c,L} $, $ \calS^L $ is, by a standard argument, a complete locally convex topological vector space. Moreover, we will show in Lemma \ref{lem:003} that its topology is generated by a countable family of seminorms, hence it is a Fr\'echet space. Thus, by Lemma \ref{lem:019}, its strong dual $ \left(\calS^L\right)' $ is a complete locally convex topological vector space.

\vskip .2in 

As in Section \ref{sec:023}, let $ C $ be a finite subset of $ G(\bbA_f) $ such that $ G(\bbA_f)=\bigsqcup_{c\in C}G(k)cL $.

\vskip .2in 

\begin{Lem}\label{lem:003}
	 The topology of $ \calS^L $ is generated by the seminorms 
	\begin{equation}\label{eq:004}
	\norm\spacedcdot_{u,-n,c,L},\qquad u\in U(\frakg_\infty),\ n\in\bbZ_{\geq0},\ c\in C.
	\end{equation}
\end{Lem}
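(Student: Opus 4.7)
The plan is to show that every seminorm $\norm\spacedcdot_{u,-n,c',L}$ with $c'\in G(\bbA_f)$ is dominated by a single seminorm of the form \eqref{eq:004}; since the full family of such seminorms generates the topology of $\calS^L$ by definition, this is enough.

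Given $c'\in G(\bbA_f)$, I would use the decomposition $G(\bbA_f)=\bigsqcup_{c\in C}G(k)cL$ to write $c'=\delta_f\,c\,l$ for some uniquely determined $c\in C$, some $\delta\in G(k)$ and some $l\in L$. Using the left $G(k)$-invariance of $f$ (applied to $\delta^{-1}$) together with the right $L$-invariance, a direct computation gives
\[ f(x_\infty,c')=f(\delta_\infty^{-1}x_\infty,c)=f_c(\delta_\infty^{-1}x_\infty),\qquad x_\infty\in G_\infty. \]
Since $u\in U(\frakg_\infty)$ is a left-invariant differential operator and the operator acts only on the archimedean variable, this identity passes through differentiation:
\[ (uf)(x_\infty,c')=(uf_c)(\delta_\infty^{-1}x_\infty). \]

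Next, I would identify the congruence subgroups attached to $c$ and $c'$. Since $c'Lc'^{-1}=\delta_f(cLc^{-1})\delta_f^{-1}$, conjugation by $\delta$ gives $\Gamma_{c',L}=\delta\,\Gamma_{c,L}\,\delta^{-1}$; embedded into $G_\infty$, this becomes $\Gamma_{c',L}=\delta_\infty\Gamma_{c,L}\delta_\infty^{-1}$. Applying Lemma \hyperref[lem:005:2]{\ref*{lem:005}\ref*{lem:005:2}} with $g=\delta_\infty$ then yields
\[ \norm{x_\infty}_{\Gamma_{c',L}\backslash G_\infty}\leq\norm{\delta_\infty}\,\norm{\delta_\infty^{-1}x_\infty}_{\Gamma_{c,L}\backslash G_\infty}. \]

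Combining these two estimates and taking the supremum over $x_\infty\in G_\infty$ (equivalently, over $y_\infty:=\delta_\infty^{-1}x_\infty\in G_\infty$) gives
\[ \norm f_{u,-n,c',L}\leq\norm{\delta_\infty}^n\,\norm f_{u,-n,c,L}. \]
Since the constant $\norm{\delta_\infty}^n$ depends only on $c'$ (once the choices of $\delta$, $c$, $l$ are fixed), this exhibits $\norm\spacedcdot_{u,-n,c',L}$ as continuous with respect to the topology generated by the seminorms in \eqref{eq:004}, completing the proof.

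There is no serious obstacle here: the argument is essentially bookkeeping with the twin invariance properties of $f$ and the conjugation behavior of $\Gamma_{c,L}$. The only mild subtlety is remembering that $u\in U(\frakg_\infty)$ commutes with the substitution $x_\infty\mapsto\delta_\infty^{-1}x_\infty$ by left-invariance, so no extra differential factors appear on the right-hand side.
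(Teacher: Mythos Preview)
Your proof is correct and follows essentially the same approach as the paper's own proof: write $c'=\delta_f c l$ via the decomposition of $G(\bbA_f)$, use left $G(k)$- and right $L$-invariance together with the left-invariance of $u$ to rewrite $(uf)(x_\infty,c')$, identify $\Gamma_{c',L}=\delta_\infty\Gamma_{c,L}\delta_\infty^{-1}$, and apply Lemma \hyperref[lem:005:2]{\ref*{lem:005}\ref*{lem:005:2}} to obtain $\norm f_{u,-n,c',L}\leq\norm{\delta_\infty}^n\norm f_{u,-n,c,L}$. Your write-up is in fact slightly more explicit about the intermediate steps (the conjugation identity for $\Gamma_{c',L}$ and the commutation of $u$ with left translation), but the argument is identical.
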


\begin{proof}
	Let $ u\in U(\frakg_\infty) $, $ n\in\bbZ_{\geq0} $ and $ c_0\in G(\bbA_f) $. It suffices to show that the seminorm $ \norm\spacedcdot_{u,-n,c_0,L} $ is continuous with respect to the locally convex topology on $ \calS^L $ generated by the seminorms \eqref{eq:004}. By definition of $ C $, there exist $ \delta=(\delta_\infty,\delta_f)\in G(k) $, $ c\in C $ and $ l\in L $ such that $ c_0=\delta_fcl $. The claim now follows from the inequality
	\begin{align*}
	\norm f_{u,-n,c_0,L}&\overset{\phantom{\text{Lemma\,\ref{lem:005}\ref{lem:005:2}}}}=\sup_{x\in G_\infty}\abs{(uf)(x,\delta_fcl)}\norm x^n_{\Gamma_{\delta_fcl,L}\backslash G_\infty}\\
	&\overset{\phantom{\text{Lemma\,\ref{lem:005}\ref{lem:005:2}}}}=\sup_{x\in G_\infty}\abs{(uf)\left(\delta_\infty^{-1}x,c\right)}\norm x^n_{\delta_\infty\Gamma_{c,L}\delta_\infty^{-1}\backslash G_\infty}\\
	&\overset{\text{Lemma\,\hyperref[lem:005:2]{\ref*{lem:005}\ref*{lem:005:2}}}}\leq\norm{\delta_\infty}^n\sup_{x\in G_\infty}\abs{(uf)\left(\delta_\infty^{-1}x,c\right)}\norm{\delta_\infty^{-1}x}^n_{\Gamma_{c,L}\backslash G_\infty}\\
	&\overset{\phantom{\text{Lemma\,\ref{lem:005}\ref{lem:005:2}}}}=\norm{\delta_\infty}^n\norm f_{u,-n,c,L},\qquad\quad f\in\calS^L.\qedhere
	\end{align*}
\end{proof} 

\vskip .2in

To clarify the relation between the real and the adelic situation, we note that for $ C\subseteq G(\bbA_f) $ as above, the direct sum of Fr\'echet spaces $ \bigoplus_{c\in C}\calS(\Gamma_{c,L}\backslash G_\infty) $ is itself a Fr\'echet space. Its topology is generated by the seminorms
\[ (f_a)_{a\in C}\mapsto\norm{f_c}_{u,-n,\Gamma_{c,L}},\qquad u\in U(\frakg_\infty),\ n\in\bbZ_{\geq0},\ c\in C. \]
The representation
\[ \left(\bigoplus_{c\in C}r_{\Gamma_{c,L}},\bigoplus_{c\in C}\calS(\Gamma_{c,L}\backslash G_\infty)\right) \]
is a smooth representation of $ G_\infty $ of moderate growth, being a direct sum of such representations. 

\vskip .2in

\begin{Lem}\label{lem:009}
	\begin{enumerate}[label=\textup{(\arabic*)},leftmargin=*,align=left]
		\item\label{lem:009:1} The representation $ \left(r_L,\calS^L\right) $ of $ G_\infty $ defined by
		\[ r_L(x)f:=f\left(\spacedcdot(x,1_{G(\bbA_f)})\right),\qquad x\in G_\infty,\ f\in\calS^L, \]
		is a smooth Fr\'echet representation of moderate growth.
		\item\label{lem:009:2} The rule $ f\mapsto(f_c)_{c\in C} $ defines an equivalence 
		\[ \Phi:\calS^L\to\bigoplus_{c\in C}\calS(\Gamma_{c,L}\backslash G_\infty) \]
		of representations $ r_L $ and $ \bigoplus_{c\in C}r_{\Gamma_{c,L}} $.
	\end{enumerate}
\end{Lem}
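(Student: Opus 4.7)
The plan is to prove part \ref{lem:009:2} first, and then deduce part \ref{lem:009:1} directly from it by transporting the structure across the isomorphism $\Phi$.

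\textbf{Step 1: $\Phi$ is well-defined.} For $f\in\calS^L$ and $c\in C$, I would show $f_c\in\calS(\Gamma_{c,L}\backslash G_\infty)$. The growth condition is immediate since the seminorm $\norm{f_c}_{u,-n,\Gamma_{c,L}}$ is exactly $\norm{f}_{u,-n,c,L}$. The $\Gamma_{c,L}$-invariance is the key computation: for $\gamma\in\Gamma_{c,L}$, one has $\gamma_f\in cLc^{-1}$, so writing $l:=c^{-1}\gamma_f^{-1}c\in L$ and using left $G(k)$-invariance followed by right $L$-invariance,
\[ f_c(\gamma_\infty x_\infty)=f(\gamma_\infty x_\infty,c)=f(x_\infty,\gamma_f^{-1}c)=f(x_\infty,cl)=f_c(x_\infty). \]

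\textbf{Step 2: $\Phi$ is a topological isomorphism.} Injectivity follows from the decomposition $G(\bbA_f)=\bigsqcup_{c\in C}G(k)cL$ together with the $G(k)$-left and $L$-right invariance of $f$. For surjectivity, given $(g_c)_{c\in C}$ with $g_c\in\calS(\Gamma_{c,L}\backslash G_\infty)$, I would define $f$ on $G(\bbA)$ by: for $x_f=\delta_fcl$ with $\delta\in G(k)$, $c\in C$, $l\in L$, set $f(x_\infty,x_f):=g_c(\delta_\infty^{-1}x_\infty)$. A short check (using that $\delta_fcl=\delta_f'cl'$ forces $\delta'^{-1}\delta\in\Gamma_{c,L}$, then using $\Gamma_{c,L}$-invariance of $g_c$) shows $f$ is well-defined; it is manifestly $G(k)$-invariant on the left and $L$-invariant on the right, and smooth on each $G_\infty\times\{\text{coset}\}$. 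Continuity of $\Phi$ and its inverse follows from the identity $\norm{f_c}_{u,-n,\Gamma_{c,L}}=\norm{f}_{u,-n,c,L}$ for $c\in C$, combined with Lemma \ref{lem:003} which states that the seminorms indexed by $c\in C$ generate the topology of $\calS^L$.

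\textbf{Step 3: $\Phi$ intertwines the representations.} This is a direct computation: for $x\in G_\infty$, $(r_L(x)f)_c(y_\infty)=f(y_\infty x,c)=f_c(y_\infty x)=(r_{\Gamma_{c,L}}(x)f_c)(y_\infty)$, so $\Phi\circ r_L(x)=\bigl(\bigoplus_{c\in C}r_{\Gamma_{c,L}}(x)\bigr)\circ\Phi$.

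\textbf{Step 4: Deduction of part \ref{lem:009:1}.} By \cite[Proposition 1.8]{casselman} each $(r_{\Gamma_{c,L}},\calS(\Gamma_{c,L}\backslash G_\infty))$ is a smooth Fr\'echet representation of moderate growth. A finite direct sum of such representations retains these properties: the space is Fr\'echet, smoothness is preserved coordinatewise, and if $\norm{r_{\Gamma_{c,L}}(x)h_c}_{u,-n,\Gamma_{c,L}}\leq\norm{x}^{r_c}\norm{h_c}_{\nu_c}$ for appropriate seminorms $\nu_c$, then taking the maximum over the finite set $C$ gives the moderate growth bound for the direct sum. Transporting this back through $\Phi$ yields that $(r_L,\calS^L)$ is a smooth Fr\'echet representation of moderate growth.

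\textbf{Main obstacle.} The only genuinely delicate point is the well-definedness in Step 2, where one must track that the diagonal embedding $G(k)\hookrightarrow G(\bbA)$ interacts correctly with the archimedean/finite factorization, so that the ambiguity in writing $x_f=\delta_fcl$ is absorbed by $\Gamma_{c,L}$-invariance of $g_c$. Everything else is a matter of unwinding definitions and invoking Lemma \ref{lem:003}.
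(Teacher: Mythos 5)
Your proposal is correct and follows essentially the same route as the paper: the paper also establishes the isomorphism $f\mapsto(f_c)_{c\in C}$ with the explicit inverse $f(x,\delta_fcl)=f_c(\delta_\infty^{-1}x)$, gets the topological identification from Lemma \ref{lem:003} together with the identity $\norm f_{u,-n,c,L}=\norm{f_c}_{u,-n,\Gamma_{c,L}}$, and deduces part (1) by transporting the (smooth, moderate-growth) direct sum $\bigoplus_{c\in C}r_{\Gamma_{c,L}}$ through $\Phi$. Your write-up merely spells out the invariance and well-definedness checks that the paper leaves to the reader.
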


\begin{proof}
	One checks easily that $ f\mapsto (f_c)_{c\in C} $ is a linear isomorphism $ C^\infty(G(k)\backslash G(\bbA))^L\to\bigoplus_{c\in C}C^\infty(\Gamma_{c,L}\backslash G_\infty) $. Its inverse maps $ (f_c)_{c\in C}\in\bigoplus_{c\in C}C^\infty(\Gamma_{c,L}\backslash G_\infty) $ to the function $ f\in C^\infty(G(k)\backslash G(\bbA))^L $ defined by
	\[ f(x,\delta_fcl)=f_c\left(\delta_\infty^{-1}x\right),\qquad x\in G_\infty,\ \delta\in G(k),\ c\in C,\ l\in L.  \]
	That these linear isomorphisms by restriction become isomorphisms of Fr\'echet spaces $ \calS^L $ and $ \bigoplus_{c\in C}\calS(\Gamma_{c,L}\backslash G_\infty) $, follows from Lemma \ref{lem:003} and the fact that
	\begin{equation}\label{eq:008}
	\norm f_{u,-n,c,L}=\norm{f_c}_{u,-n,\Gamma_{c,L}}
	\end{equation}
	for all $ f\in C^\infty(G(k)\backslash G(\bbA))^L $, $ u\in U(\frakg_\infty) $, $ n\in\bbZ_{\geq0} $ and $ c\in G(\bbA_f) $. Finally, one checks easily that by pulling back the representation $ \bigoplus_{c\in C}r_{\Gamma_{c,L}} $ to $ \calS^L $ via the isomorphism $ \Phi $, one obtains the representation $ r_L $. The lemma follows.
\end{proof}

\vskip .2in 

\begin{Lem}\label{lem:022}
	\begin{enumerate}[label=\textup{(\arabic*)},leftmargin=*,align=left]
		\item\label{lem:022:1} The contragredient representation $ \left(r_L',\left(\calS^L\right)'\right) $ is a smooth representation of $ G_\infty $.
		\item\label{lem:022:2} The linear operator $ \Psi:\bigoplus_{c\in C}\calS(\Gamma_{c,L}\backslash G_\infty)'\to\left(\calS^L\right)' $, $ (T_c)_{c\in C}\mapsto T $, where
		\[ \scal Tf:=\sum_{c\in C}\scal{T_c}{f_c},\qquad f\in\calS^L, \]
		is an equivalence of representations $ \bigoplus_{c\in C}r_{\Gamma_{c,L}}' $ and $ r_L' $. 
	\end{enumerate}
\end{Lem}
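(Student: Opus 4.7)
The plan is to deduce both parts of the lemma from the topological equivalence $\Phi$ of Lemma \ref{lem:009} combined with the smoothness result of Lemma \ref{lem:136}. I would prove part \ref{lem:022:2} first, and then obtain part \ref{lem:022:1} by transport of structure.

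For part \ref{lem:022:2}, the starting observation is a standard fact from functional analysis: for a finite direct sum of Fr\'echet spaces, the strong dual is canonically topologically isomorphic to the direct sum of strong duals, with the pairing $\bigl((T_c)_c,(g_c)_c\bigr)\mapsto\sum_{c\in C}\scal{T_c}{g_c}$. Since $\Phi$ is a topological isomorphism of Fr\'echet spaces, its transpose is a topological isomorphism of strong duals---continuous linear maps have continuous transposes with respect to the strong topology because they send bounded sets to bounded sets---and composing this transpose with the identification above yields precisely the map $\Psi$ of the statement. To verify the intertwining property, I would compute directly: since $(r_L(x^{-1})f)_c=r_{\Gamma_{c,L}}(x^{-1})f_c$ for every $c\in C$, we have
\[
\scal{r_L'(x)\,\Psi\bigl((T_c)_{c\in C}\bigr)}{f}=\sum_{c\in C}\scal{T_c}{r_{\Gamma_{c,L}}(x^{-1})f_c}=\scal{\Psi\bigl((r_{\Gamma_{c,L}}'(x)T_c)_{c\in C}\bigr)}{f},
\]
which is the required intertwining identity.

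Part \ref{lem:022:1} is then an immediate corollary. By Lemma \ref{lem:136}, each representation $\left(r_{\Gamma_{c,L}}',\calS(\Gamma_{c,L}\backslash G_\infty)'\right)$ is smooth, and a finite direct sum of smooth representations of $G_\infty$ is smooth; transporting along the equivalence $\Psi$ of part \ref{lem:022:2} yields the smoothness of $\left(r_L',(\calS^L)'\right)$.

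The main technical point I expect to dwell on is the careful verification that under the canonical duality identification $\bigl(\bigoplus_{c\in C}V_c\bigr)'\cong\bigoplus_{c\in C}V_c'$ for finite direct sums of Fr\'echet spaces, the transpose $\Phi^{*}$ really coincides with the explicit formula defining $\Psi$. Everything else amounts to invoking the standard duality between Fr\'echet spaces and their strong duals, together with Lemma \ref{lem:136}.
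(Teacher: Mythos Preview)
Your proposal is correct and follows essentially the same route as the paper: the paper also proves \ref{lem:022:2} first by writing $\Psi=\Phi^*\circ\Lambda^{-1}$, where $\Phi^*$ is the transpose of the equivalence $\Phi$ from Lemma~\ref{lem:009} and $\Lambda$ is the canonical identification $\left(\bigoplus_{c\in C}\calS(\Gamma_{c,L}\backslash G_\infty)\right)'\cong\bigoplus_{c\in C}\calS(\Gamma_{c,L}\backslash G_\infty)'$ (recorded as Lemma~\ref{lem:020}\ref{lem:020:2}), and then deduces \ref{lem:022:1} from \ref{lem:022:2} together with Lemma~\ref{lem:136}. The only cosmetic difference is that the paper gets the intertwining property for free from the $G_\infty$-equivariance of $\Phi$, whereas you verify it by a direct computation.
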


\begin{proof}
	For every $ c\in C $, let $ \iota_{c,L} $ be the canonical inclusion $ \calS(\Gamma_{c,L}\backslash G_\infty)\hookrightarrow\bigoplus_{a\in C}\calS(\Gamma_{a,L}\backslash G_\infty) $. Using Lemma \hyperref[lem:020:2]{\ref*{lem:020}\ref*{lem:020:2}}, one sees easily that the rule $ T\mapsto \left(T\circ\iota_{c,L}\right)_{c\in C} $ defines an equivalence $ \Lambda $ of representations
	\begin{equation}\label{eq:021}
	\left(\left(\bigoplus_{c\in C}r_{\Gamma_{c,L}}\right)',\left(\bigoplus_{c\in C}\calS(\Gamma_{c,L}\backslash G_\infty)\right)'\right) \text{ and } \left(\bigoplus_{c\in C}r_{\Gamma_{c,L}}',\bigoplus_{c\in C}\calS(\Gamma_{c,L}\backslash G_\infty)'\right).
	\end{equation}
	In turn, by Lemma \hyperref[lem:009:2]{\ref*{lem:009}\ref*{lem:009:2}} the linear operator $ \Phi^*:\left(\bigoplus_{c\in C}\calS(\Gamma_{c,L}\backslash G_\infty)\right)'\to\left(\calS^L\right)' $,
	\[ \Phi^*(T):=T\circ\Phi, \]
	is an equivalence of representations $ \left(\bigoplus_{c\in C}r_{\Gamma_{c,L}}\right)' $ and $ r_L' $. Since $ \Psi=\Phi^*\circ\Lambda^{-1} $, the claim \ref{lem:022:2} follows. The claim \ref{lem:022:1} now follows from \ref{lem:022:2} and Lemma \ref{lem:136}.
\end{proof}

\vskip .2in

\begin{Lem}\label{lem:023}
	\begin{enumerate}[label=\textup{(\arabic*)},leftmargin=*,align=left]
		\item\label{lem:023:1} We have the following embedding of $ G_\infty $-modules: $ \calA_{umg}(G(k)\backslash G(\bbA))^L\hookrightarrow\left(\calS^L\right)' $, $ \varphi\mapsto T_\varphi $, where 
		\begin{equation}\label{eq:027}
		\begin{aligned}
		\scal{T_\varphi}f&:=\int_{G(k)\backslash G(\bbA)}\varphi(x)\,f(x)\,dx=\vol(L)\sum_{c\in C}\int_{\Gamma_{c,L}\backslash G_\infty}\varphi_c(x)f_c(x)\,dx
		\end{aligned}
		\end{equation}
		for all $ \varphi\in\calA_{umg}(G(k)\backslash G(\bbA))^L $ and $ f\in\calS^L $.
		\item\label{lem:023:2} The inverse of $ \Psi $ restricts to the following isomorphisms given by the rule $ \varphi\mapsto ((\vol L)\varphi_c)_{c\in C} $:
		\begin{enumerate}[label=\textup{(\roman*)},leftmargin=*,align=left]
			\item\label{lem:023:2:1} an isomorphism of $ G_\infty $-modules $ \calA_{umg}(G(k)\backslash G(\bbA))^L\to\bigoplus_{c\in C}\calA_{umg}(\Gamma_{c,L}\backslash G_\infty) $
			\item\label{lem:023:2:2} an isomorphism of $ G_\infty $-modules $ \calA^\infty(G(k)\backslash G(\bbA))^L\to\bigoplus_{c\in C}\calA^\infty(\Gamma_{c,L}\backslash G_\infty) $ 
			\item\label{lem:023:2:3} an isomorphism of $ (\frakg_\infty, K_\infty) $-modules $ \calA(G(k)\backslash G(\bbA))^L\to\bigoplus_{c\in C}\calA(\Gamma_{c,L}\backslash G_\infty) $.
		\end{enumerate}
	\end{enumerate}
\end{Lem}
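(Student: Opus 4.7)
The plan is to reduce both parts of the lemma to the classical picture on $\Gamma_{c,L}\backslash G_\infty$ via the equivalence $\Psi$ of Lemma \hyperref[lem:022:2]{\ref*{lem:022}\ref*{lem:022:2}} and the integration formula \eqref{eq:024}, so that the classical embedding \eqref{eq:145} does most of the analytic work.

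First, for any $\varphi\in\calA_{umg}(G(k)\backslash G(\bbA))^L$ and any $c\in G(\bbA_f)$, I would check that $\varphi_c\in\calA_{umg}(\Gamma_{c,L}\backslash G_\infty)$: smoothness and uniform moderate growth are inherited directly from the definitions \eqref{eq:054} and \eqref{eq:078}, while $\Gamma_{c,L}$-invariance comes from the observation that for $\gamma\in\Gamma_{c,L}$ one has $\gamma_fc=cl$ for some $l\in L$, whence $\varphi_c(\gamma_\infty x)=\varphi(\gamma_\infty x,c)=\varphi(\gamma_\infty x,cl)=\varphi(\gamma_\infty x,\gamma_fc)=\varphi(x,c)=\varphi_c(x)$, using $L$- and $G(k)$-invariance of $\varphi$. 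With this in hand, the classical embedding \eqref{eq:145} produces $T_{\varphi_c}\in\calS(\Gamma_{c,L}\backslash G_\infty)'$ for every $c\in C$, so that the tuple $((\vol L)T_{\varphi_c})_{c\in C}$ lies in $\bigoplus_{c\in C}\calS(\Gamma_{c,L}\backslash G_\infty)'$ and its image $S:=\Psi\bigl(((\vol L)T_{\varphi_c})_{c\in C}\bigr)$ is a well-defined element of $(\calS^L)'$. Applying \eqref{eq:024} first to $|\varphi f|$---which is legitimate because each $\int_{\Gamma_{c,L}\backslash G_\infty}|\varphi_cf_c|\,dx_\infty<\infty$ by the absolute convergence underlying \eqref{eq:145}---and then to $\varphi f$ itself, one obtains $S(f)=\sum_{c\in C}(\vol L)\,T_{\varphi_c}(f_c)=\int_{G(k)\backslash G(\bbA)}\varphi f\,dx$ for every $f\in\calS^L$. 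This simultaneously establishes $T_\varphi=S\in(\calS^L)'$, both equalities in \eqref{eq:027}, and the identification $\Psi^{-1}(T_\varphi)=((\vol L)\varphi_c)_{c\in C}$ under \eqref{eq:145}. Injectivity of $\varphi\mapsto T_\varphi$ then follows from the injectivity of $\Psi$ combined with that of each classical map $\varphi_c\mapsto T_{\varphi_c}$, and $G_\infty$-equivariance is inherited from $\Psi$ and the classical $G_\infty$-equivariance of \eqref{eq:145}.

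To complete \ref{lem:023:2:1} I still need the reverse inclusion: given $(f_c)_{c\in C}\in\bigoplus_c\calA_{umg}(\Gamma_{c,L}\backslash G_\infty)$, the formula $f(x,\delta_fcl):=f_c(\delta_\infty^{-1}x)$ from Lemma \hyperref[lem:009:2]{\ref*{lem:009}\ref*{lem:009:2}} defines an element of $C^\infty(G(k)\backslash G(\bbA))^L$ whose fiber at any $c'\in G(\bbA_f)$ is, by construction, a left translate of some $f_c$ and is therefore of uniformly moderate growth (the norm axioms \ref{enum:001:1}--\ref{enum:001:2} control the change of norm under left translation). The refinements \ref{lem:023:2:2} and \ref{lem:023:2:3} then follow from the identities $(z\varphi)_c=z\varphi_c$ for $z\in Z(\frakg_\infty)$ and $(r_L(k)\varphi)_c=r_{\Gamma_{c,L}}(k)\varphi_c$ for $k\in K_\infty$: since $\varphi$ is determined by the tuple $(\varphi_c)_{c\in C}$ and conversely each $\varphi_c$ is recovered from $\varphi$, finite-dimensionality of $\{z\varphi:z\in Z(\frakg_\infty)\}$ (resp.\ of $\mathrm{span}_\bbC\{r_L(k)\varphi:k\in K_\infty\}$) is equivalent to the analogous finite-dimensionality on every coordinate, so the isomorphism of \ref{lem:023:2:1} restricts to the two finer ones.

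The main obstacle I anticipate is organizational rather than technical: the temptation to reprove Casselman's analytic estimates directly in the adelic setting, instead of routing everything through $\Psi$ and \eqref{eq:024}. Once the invariance calculation for $\varphi_c$ above is in place, the analytic content is entirely imported from \eqref{eq:145}, and the rest of the proof is bookkeeping.
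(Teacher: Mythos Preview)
Your proposal is correct and follows essentially the same route as the paper's proof: reduce everything to the classical picture via the integration formula \eqref{eq:024}, the equivalence $\Psi$, and the embedding \eqref{eq:145}. The paper's argument is a two-sentence sketch pointing to exactly these ingredients, and you have faithfully unpacked the bookkeeping it leaves implicit.
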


\begin{proof}
	The equality in \eqref{eq:027} follows from \eqref{eq:024}. The rest of the lemma follows easily from \eqref{eq:145} and the definitions of $ \Psi $ and of the function spaces \eqref{eq:154}--\eqref{eq:098} and \eqref{eq:054}--\eqref{eq:056}.
\end{proof}

\vskip .2in 

As a corollary of the above results and Casselman's \cite[Theorem 1.16]{casselman}, in Corollary \ref{cor:073} we determine the G\aa rding subspace of $ \left(\calS^L\right)' $. The following proposition is a special case of \cite[Theorem 1.16]{casselman}.

\begin{Prop}\label{prop:147}
	For every $ c\in G(\bbA_f) $, we have 
	\[ \left(\calS(\Gamma_{c,L}\backslash G_\infty)'\right)_{G_\infty\textup{-G\aa rding}}=\calA_{umg}(\Gamma_{c,L}\backslash G_\infty). \]
\end{Prop}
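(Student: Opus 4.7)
The plan is to derive this directly from Casselman's \cite[Theorem 1.16]{casselman}, which asserts precisely that for any congruence/arithmetic subgroup $\Gamma$ of a reductive Lie group, the G\aa rding subspace of $\calS(\Gamma\backslash G_\infty)'$ coincides with the space $\calA_{umg}(\Gamma\backslash G_\infty)$ of smooth functions of uniformly moderate growth. My only task is therefore to verify that $\Gamma=\Gamma_{c,L}$ falls within the hypotheses of that theorem.

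To this end, I would first recall from Section \ref{sec:023} (specifically the discussion surrounding \eqref{eq:113}) that $\Gamma_{c,L}=G(k)\cap cLc^{-1}$ is a congruence subgroup of $G(k)$ and that, embedded diagonally, it is a discrete subgroup of $G_\infty$ of finite covolume. Since Casselman works with reductive groups over $\bbQ$, a small bookkeeping step is to pass from $G/k$ to the Weil restriction $\mathbf{H}:=\mathrm{Res}_{k/\bbQ}G$; under the canonical identification $\mathbf{H}(\bbR)=G_\infty$, the group $\Gamma_{c,L}$ becomes an arithmetic subgroup of $\mathbf{H}(\bbQ)$. Moreover, the seminorms $\norm\spacedcdot_{u,-n,\Gamma_{c,L}}$ defining $\calS(\Gamma_{c,L}\backslash G_\infty)$ depend only on the Lie group $G_\infty$ and on the fixed norm $\norm\spacedcdot$, and by the equivalence of any two norms on $G_\infty$ noted after \ref{enum:001:3} they match Casselman's conventions up to a harmless rescaling of the indices $n$. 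With these identifications in place, \cite[Theorem 1.16]{casselman} applies verbatim and yields the claim. I anticipate no substantive obstacle: all the analytic content is in Casselman's theorem, and the present statement is essentially a translation of its hypotheses into the adelic notation of Section \ref{sec:023}.
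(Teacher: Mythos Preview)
Your proposal is correct and matches the paper's approach exactly: the paper simply states that this proposition is a special case of \cite[Theorem 1.16]{casselman} and gives no further argument. Your additional remarks on Weil restriction and norm equivalence are reasonable justifications for why Casselman's hypotheses are met, though the paper does not spell these out.
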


\vskip .2in

\begin{Cor}\label{cor:073}
	We have
	\[ \left(\left(\calS^L\right)'\right)_{G_\infty\textup{-G\aa rding}}=\calA_{umg}(G(k)\backslash G(\bbA))^L. \]
\end{Cor}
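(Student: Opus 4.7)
The plan is to transport the computation from the real setting (Proposition \ref{prop:147}) to the adelic setting via the $G_\infty$-equivariant isomorphism $\Psi$ of Lemma \hyperref[lem:022:2]{\ref*{lem:022}\ref*{lem:022:2}}. Two formal facts about G\aa rding subspaces will do the work.

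\textbf{Step 1: G\aa rding subspaces are preserved by equivalences of continuous representations.} If $T : V \to W$ is a continuous linear bijection intertwining two continuous representations of $G_\infty$ whose inverse is also continuous, then for every $\psi \in C_c^\infty(G_\infty)$ and $v \in V$ one has $T(\pi_V(\psi)v) = \pi_W(\psi)(Tv)$, by pulling $T$ inside the Gelfand--Pettis integral via any continuous linear functional on $W$. Hence $T$ sends $V_{G_\infty\text{-G\aa rding}}$ onto $W_{G_\infty\text{-G\aa rding}}$.

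\textbf{Step 2: G\aa rding subspaces commute with finite direct sums.} For a finite direct sum $\bigoplus_{c \in C} V_c$ of continuous $G_\infty$-representations, the action of $C_c^\infty(G_\infty)$ is diagonal, so
\[ \Bigl(\bigoplus_{c \in C} V_c\Bigr)_{G_\infty\text{-G\aa rding}} = \bigoplus_{c \in C} (V_c)_{G_\infty\text{-G\aa rding}}. \]

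\textbf{Step 3: Assemble.} Applying Step 1 to $\Psi$ and Step 2 to its domain, we get
\[ \bigl((\calS^L)'\bigr)_{G_\infty\text{-G\aa rding}} = \Psi\Bigl(\bigoplus_{c \in C} \bigl(\calS(\Gamma_{c,L}\backslash G_\infty)'\bigr)_{G_\infty\text{-G\aa rding}}\Bigr). \]
By Proposition \ref{prop:147}, each summand on the right equals $\calA_{umg}(\Gamma_{c,L}\backslash G_\infty)$, so the right-hand side is $\Psi\bigl(\bigoplus_{c \in C}\calA_{umg}(\Gamma_{c,L}\backslash G_\infty)\bigr)$. Finally, Lemma \hyperref[lem:023:2:1]{\ref*{lem:023}\ref*{lem:023:2}\ref*{lem:023:2:1}} identifies this image, via $\Psi$, with $\calA_{umg}(G(k)\backslash G(\bbA))^L$ (the factor $\vol(L)$ being an irrelevant nonzero scalar). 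This yields the claimed equality.

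The only nontrivial step is Step 1, and even there the work is purely formal once one notes that $\Psi$ is an isomorphism of topological vector spaces (so both $\Psi$ and $\Psi^{-1}$ commute with Gelfand--Pettis integrals). Steps 2 and 3 are bookkeeping. I do not foresee a genuine obstacle; the corollary is a direct transport of Casselman's theorem through the decomposition already established in Lemmas \ref{lem:022} and \ref{lem:023}.
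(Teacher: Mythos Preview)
Your proof is correct and follows essentially the same route as the paper's: both use Proposition~\ref{prop:147} on each summand, the compatibility of G\aa rding subspaces with finite direct sums, and then transport the result through the equivalence $\Psi$ via Lemma~\hyperref[lem:023:2:1]{\ref*{lem:023}\ref*{lem:023:2}\ref*{lem:023:2:1}}. You have simply made explicit the two formal facts (Steps 1 and 2) that the paper's proof uses implicitly.
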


\begin{proof}
	By Proposition \ref{prop:147},
	\[ \left(\bigoplus_{c\in C}\calS(\Gamma_{c,L}\backslash G_\infty)'\right)_{G_\infty\textup{-G\aa rding}}=\bigoplus_{c\in C} \calA_{umg}(\Gamma_{c,L}\backslash G_\infty). \]
	The claim follows from this by applying the equivalence $ \Psi $ and using Lemma \hyperref[lem:023:2:1]{\ref*{lem:022}\ref*{lem:023:2}\ref*{lem:023:2:1}}. 
\end{proof}

\section{The space $ \calS(G(k)\backslash G(\bbA)) $}

In this section we define the space $ \calS=\calS(G(k)\backslash G(\bbA)) $. First, we make the following observation.

\vskip .2in 

\begin{Lem}\label{lem:015}
	Let $ L'\subseteq L $ be open compact subgroups of $ G(\bbA_f) $. Then, $ \calS^L $ is a closed subspace of $ \calS^{L'} $.
\end{Lem}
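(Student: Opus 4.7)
The plan is to prove three things in order: (i) that $\calS^L \subseteq \calS^{L'}$ as sets, (ii) that the intrinsic Fréchet topology on $\calS^L$ coincides with the subspace topology inherited from $\calS^{L'}$, and (iii) that $\calS^L$ is closed in $\calS^{L'}$.

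Step (i) is immediate: if $f$ is right-invariant under $L$, it is certainly right-invariant under the subgroup $L'$, and both spaces sit inside $C^\infty(G(k)\backslash G(\bbA))$ with the same smoothness and growth requirements at each $c \in G(\bbA_f)$.

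For step (ii), the essential input is Lemma \hyperref[lem:005:1]{\ref*{lem:005}\ref*{lem:005:1}}. Since $L' \subseteq L$ and both are open compact in $G(\bbA_f)$, the index $[L : L']$ is finite, which forces $\Gamma_{c,L'} \subseteq \Gamma_{c,L}$ to have finite index for every $c \in G(\bbA_f)$ (the quotient injects into $cLc^{-1}/cL'c^{-1}$). Picking a finite set $S_c$ of left coset representatives, Lemma \hyperref[lem:005:1]{\ref*{lem:005}\ref*{lem:005:1}} yields a constant $M_c := \sup_{\delta \in S_c}\norm\delta < \infty$ with
\[ \norm x_{\Gamma_{c,L}\backslash G_\infty} \leq \norm x_{\Gamma_{c,L'}\backslash G_\infty} \leq M_c \norm x_{\Gamma_{c,L}\backslash G_\infty}, \qquad x \in G_\infty. \]
Raising to the $n$th power and taking the supremum against $|(uf)(x,c)|$, one obtains
\[ \norm f_{u,-n,c,L} \leq \norm f_{u,-n,c,L'} \leq M_c^n \norm f_{u,-n,c,L},\qquad f \in \calS^L. \]
Since the families $\{\norm\spacedcdot_{u,-n,c,L}\}$ and $\{\norm\spacedcdot_{u,-n,c,L'}\}$ (as $c$ ranges over $G(\bbA_f)$, or equivalently over a finite set of representatives by Lemma \ref{lem:003}) generate the respective topologies, this two-sided comparison shows that the subspace topology from $\calS^{L'}$ agrees with the native topology of $\calS^L$.

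For step (iii), I would use that $\calS^{L'}$ is Fréchet, hence metrizable, so closedness can be tested on sequences. If $(f_n) \subseteq \calS^L$ converges to $f$ in $\calS^{L'}$, then in particular the seminorm $\norm{f_n - f}_{1,0,c,L'} = \sup_{x \in G_\infty} |(f_n - f)(x,c)|$ tends to $0$ for every $c \in G(\bbA_f)$, so $f_n \to f$ pointwise on $G(\bbA)$. Each $f_n$ satisfies $f_n\left(\spacedcdot\,(1_{G_\infty}, l)\right) = f_n$ for every $l \in L$, and this right-$L$-invariance passes to the pointwise limit $f$, which therefore belongs to $\calS^L$.

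No step looks genuinely hard: the comparison of growth weights attached to commensurable congruence lattices is already packaged in Lemma \ref{lem:005}, and the stability of right-$L$-invariance under pointwise limits is routine. The one point requiring mild attention is that the index $[\Gamma_{c,L} : \Gamma_{c,L'}]$ really is finite uniformly in the relevant sense (it is bounded by $[L:L']$), which justifies choosing the finite sets $S_c$ used above.
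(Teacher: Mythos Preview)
Your proposal is correct and, for the seminorm comparison in step~(ii), essentially identical to the paper's argument: the paper invokes \eqref{eq:039}, which was itself proved via Lemma~\hyperref[lem:005:1]{\ref*{lem:005}\ref*{lem:005:1}}, so your direct appeal to that lemma is the same thing. One small imprecision: in step~(i) you say the two spaces have ``the same growth requirements,'' but in fact the defining seminorms use different weights $\norm\spacedcdot_{\Gamma_{c,L}\backslash G_\infty}$ versus $\norm\spacedcdot_{\Gamma_{c,L'}\backslash G_\infty}$; the set inclusion $\calS^L\subseteq\calS^{L'}$ really needs the inequality from step~(ii), so step~(i) is not quite ``immediate'' on its own.

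The only genuine difference is step~(iii). The paper dispatches closedness in one line: $\calS^L$ is already known to be a Fr\'echet space, hence complete, and by step~(ii) its intrinsic topology agrees with the subspace topology from $\calS^{L'}$; a complete subspace of a Hausdorff topological vector space is automatically closed. Your sequential argument via pointwise limits and preservation of right-$L$-invariance is perfectly valid but does more work than necessary given what is already on the table.
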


\begin{proof}
	Let $ u\in U(\frakg_\infty) $, $ n\in\bbZ_{\geq0} $ and $ c\in G(\bbA_f) $. We have
	\[ \norm x_{\Gamma_{c,L}\backslash G_\infty}\leq\norm x_{\Gamma_{c,L'}\backslash G_\infty}\overset{\eqref{eq:039}}\leq M_{c,L',c,L}\norm x_{\Gamma_{c,L}\backslash G_\infty},\qquad x\in G_\infty, \]
	hence
	\begin{equation}\label{eq:135}
	\norm f_{u,-n,c,L}\leq\norm f_{u,-n,c,L'}\leq M_{c,L',c,L}^n\norm f_{u,-n,c,L},\qquad f\in\calS^L.
	\end{equation}
	This shows that $ \calS^L $ is a topological subspace of $ \calS^{L'} $, and it is closed in $ \calS^{L'} $ since it is complete.
\end{proof}

\vskip .2in 

Let us define a vector space
\begin{equation}\label{eq:106}
\calS=\calS(G(k)\backslash G(\bbA)):=\bigcup_{L}\calS^L,
\end{equation}
where $ L $ goes over all open compact subgroups of $ G(\bbA_f) $. We equip $ \calS $ with the inductive limit topology determined by the inclusion maps $ \iota_L:\calS^L\hookrightarrow\calS $, i.e., with the finest locally convex topology on $ \calS $ with respect to which the maps $ \iota_L $ are continuous \cite[Definition 12.2.1]{narici}. Using Lemma \ref{lem:015}, one sees easily that it suffices to require that the maps $ \iota_{L_m} $, $ m\in\bbZ_{>0} $, be continuous for some decreasing sequence $ (L_m)_{m\in\bbZ_{>0}} $ such that $ \left\{L_m:m\in\bbZ_{>0}\right\} $ is a neighborhood basis of $ 1_{G(\bbA_f)} $ in $ G(\bbA_f) $. In other words, $ \calS $ is an LF-space with a defining sequence $ \left(\calS^{L_m}\right)_{m\in\bbZ_{>0}} $ (see Definition \ref{def:045}). In particular, we have the following lemma.

\vskip .2in

\begin{Lem}\label{lem:041}
	\begin{enumerate}[label=\textup{(\arabic*)},leftmargin=*,align=left]
		\item\label{lem:041:1} The space $ \calS $ is a complete locally convex topological vector space.
		\item\label{lem:041:2} For every open compact subgroup $ L $ of $ G(\bbA_f) $, $ \calS^L $ is a closed subspace of $ \calS $.
		\item\label{lem:041:3} A subset $ B $ of $ \calS $ is bounded if and only if there exists an open compact subgroup $ L $ of $ G(\bbA_f) $ such that $ B\subseteq\calS^L $ and $ B $ is bounded in $ \calS^L $.
		\item\label{lem:041:4} Let $ V $ be a locally convex topological vector space. A linear operator $ A:\calS\to V $ is continuous if and only if for every open compact subgroup $ L $ of $ G(\bbA_f) $, the restriction $ A\big|_{\calS^L}:\calS^L\to V $ is continuous.		
		\item\label{lem:041:5} Let $ A:\calS\to\calS $ be a linear operator such that for every open compact subgroup $ L $ of $ G(\bbA_f) $, there exists an open compact subgroup $ L' $ of $ G(\bbA_f) $ such that $ A\left(\calS^L\right)\subseteq\calS^{L'} $. Then, $ A $ is continuous if and only if the restrictions $ A\big|_{\calS^L}:\calS^L\to\calS^{L'} $ are continuous.
	\end{enumerate}
\end{Lem}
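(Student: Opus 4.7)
The plan is to derive all five claims from the general theory of LF-spaces applied to the defining sequence $\left(\calS^{L_m}\right)_{m\in\bbZ_{>0}}$ identified in the paragraph preceding the statement, together with Lemma \ref{lem:015}. Parts \ref{lem:041:1}, \ref{lem:041:2}, \ref{lem:041:4} and \ref{lem:041:5} are essentially bookkeeping consequences of the universal/topological properties of strict inductive limits of Fréchet spaces; the real content is part \ref{lem:041:3}, which requires the Dieudonné--Schwartz theorem on bounded sets in strict LF-spaces, presumably stated in the appendix.

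For \ref{lem:041:1}, I invoke the standard fact (to be cited from the appendix) that every LF-space is complete, applied to the defining sequence $\left(\calS^{L_m}\right)_m$. For \ref{lem:041:2}, given an arbitrary open compact subgroup $L$ of $G(\bbA_f)$, I choose $m$ large enough that $L_m\subseteq L$; then Lemma \ref{lem:015} gives $\calS^L\subseteq\calS^{L_m}$ as a closed subspace. Since in a strict inductive limit each defining Fréchet step $\calS^{L_m}$ is already closed in $\calS$ (again a property from the appendix), transitivity yields that $\calS^L$ is closed in $\calS$.

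For \ref{lem:041:3}, the ``if'' direction is immediate because the inclusion $\iota_L:\calS^L\hookrightarrow\calS$ is continuous and continuous linear maps send bounded sets to bounded sets. For the ``only if'' direction I invoke the Dieudonné--Schwartz theorem: every bounded subset $B$ of a strict LF-space is contained in, and bounded in, some step of a defining sequence, so $B\subseteq\calS^{L_m}$ and $B$ is bounded in $\calS^{L_m}$ for some $m$; then $L:=L_m$ works. Part \ref{lem:041:4} is the universal property of inductive limits of locally convex spaces: a linear map out of $\calS$ is continuous iff its composition with each $\iota_L$ is continuous (this reduces to the defining sequence via Lemma \ref{lem:015} exactly as in the paragraph before the statement). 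Finally, part \ref{lem:041:5} follows from \ref{lem:041:4}: given $A:\calS\to\calS$ with $A\left(\calS^L\right)\subseteq\calS^{L'}$ and $A|_{\calS^L}:\calS^L\to\calS^{L'}$ continuous, compose with the continuous inclusion $\iota_{L'}:\calS^{L'}\hookrightarrow\calS$ to obtain continuity of $A|_{\calS^L}:\calS^L\to\calS$, and conclude by \ref{lem:041:4}.

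The main obstacle is \ref{lem:041:3}, whose ``only if'' direction is not a formal consequence of the inductive limit topology but rather the nontrivial Dieudonné--Schwartz theorem for strict LF-spaces. Everything else is a direct application of general facts about inductive limits once the defining sequence has been exhibited; the care needed is simply to pass from the countable defining sequence $\left(\calS^{L_m}\right)_m$ to arbitrary open compact subgroups $L$, which is handled uniformly by choosing $L_m\subseteq L$ and invoking Lemma \ref{lem:015}.
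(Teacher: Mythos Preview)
Your proposal is correct and matches the paper's approach exactly: the paper simply cites the corresponding general LF-space facts from the appendix (Lemma~\ref{lem:006}\ref{lem:006:1},\ref{lem:006:3},\ref{lem:006:4},\ref{lem:006:7}) for \ref{lem:041:1}--\ref{lem:041:4}, and your reduction from arbitrary $L$ to the defining sequence via Lemma~\ref{lem:015} is precisely the implicit step. One small omission in \ref{lem:041:5}: you only argue the ``if'' direction; for the ``only if'' direction (continuity of $A$ implies continuity of each $A\big|_{\calS^L}:\calS^L\to\calS^{L'}$ with the intrinsic Fr\'echet topology on $\calS^{L'}$) you need that $\calS^{L'}$ carries the subspace topology from $\calS$, which is exactly \ref{lem:041:2}---this is why the paper derives \ref{lem:041:5} from both \ref{lem:041:2} and \ref{lem:041:4}.
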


\begin{proof}
	The claims \ref{lem:041:1}, \ref{lem:041:2}, \ref{lem:041:3} and \ref{lem:041:4} follow from Lemma \hyperref[lem:006:1]{\ref*{lem:006}\ref*{lem:006:1}}, Lemma \hyperref[lem:006:3]{\ref*{lem:006}\ref*{lem:006:3}}, Lemma \hyperref[lem:006:4]{\ref*{lem:006}\ref*{lem:006:4}} and Lemma \hyperref[lem:006:7]{\ref*{lem:006}\ref*{lem:006:7}}, respectively. The claim \ref{lem:041:5} follows from \ref{lem:041:2} and \ref{lem:041:4}.
\end{proof}

\vskip .2in

The following proposition provides a foundation for doing harmonic analysis on $ \calS $. The proof of its first part is quite tedious, so we postpone it until Section \ref{sec:049}.

\vskip .2in

\begin{Prop}\label{prop:016}
	\begin{enumerate}[label=\textup{(\arabic*)},leftmargin=*,align=left]
		\item\label{prop:016:1} The right regular representation $ (r,\calS) $ of $ G(\bbA) $ is well-defined and continuous, in the sense that the map $ G(\bbA)\times\calS\to\calS $, $ (x,f)\mapsto r(x)f $, is continuous.
		\item\label{prop:016:2} The representation $ \left(r\big|_{G_\infty},\calS\right) $ is a smooth representation of $ G_\infty $. 
	\end{enumerate}
\end{Prop}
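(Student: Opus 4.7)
The plan is to dispose of part (2) as an immediate consequence of Lemma \hyperref[lem:009:1]{\ref*{lem:009}\ref*{lem:009:1}} together with the LF-space structure, and to attack part (1) in two stages: first well-definedness and continuity of each operator $r(x)$, then joint continuity at an arbitrary point $(x_0,f_0)$, which is the delicate step.

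For part (2), given $f\in\calS$ pick an open compact subgroup $L$ with $f\in\calS^L$. Since $G_\infty$ and $G(\bbA_f)$ commute inside $G(\bbA)$, the subspace $\calS^L$ is stable under $r(G_\infty)$, and the restricted action coincides with $r_L$, which by Lemma \hyperref[lem:009:1]{\ref*{lem:009}\ref*{lem:009:1}} is a smooth Fr\'echet representation of moderate growth. Hence $x_\infty\mapsto r(x_\infty)f$ is smooth into $\calS^L$, and composition with the continuous linear inclusion $\calS^L\hookrightarrow\calS$ gives the desired smoothness into $\calS$.

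For the first step of part (1), fix $x=(x_\infty,x_f)\in G(\bbA)$ and an open compact subgroup $L$, and set $L'_x:=x_fLx_f^{-1}$. A direct check shows that $r(x)f$ is right $L'_x$-invariant for $f\in\calS^L$. To establish the Schwartz condition and the continuity $r(x)\colon\calS^L\to\calS^{L'_x}$, I would work through Lemma \hyperref[lem:009:2]{\ref*{lem:009}\ref*{lem:009:2}}: with representatives $C_0$ for $G(\bbA_f)=\bigsqcup G(k)cL$ and $C'$ for $G(\bbA_f)=\bigsqcup G(k)cL'_x$, for each $c\in C'$ one writes $cx_f=\delta_f\tilde c\,l$ with $\delta\in G(k)$, $\tilde c\in C_0$ and $l\in L$, and computes
\[
(r(x)f)_c(y)=f_{\tilde c}\bigl(\delta_\infty^{-1}\,y\,x_\infty\bigr).
\]
One then verifies the identity $\Gamma_{c,L'_x}=\delta_\infty\Gamma_{\tilde c,L}\delta_\infty^{-1}$ and applies Lemma \ref{lem:005} and \eqref{eq:039} to bound each seminorm $\norm{r(x)f}_{u,-n,c,L'_x}$ by finitely many seminorms of $f$. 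Continuity of $r(x)\colon\calS\to\calS$ then follows from Lemma \hyperref[lem:041:4]{\ref*{lem:041}\ref*{lem:041:4}}.

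For joint continuity at $(x_0,f_0)$, choose $L$ with $f_0\in\calS^L$ and put $L'':=x_{0,f}Lx_{0,f}^{-1}$. The set $V:=x_0\cdot(G_\infty\times L)$ is an open neighborhood of $x_0$, and every $x\in V$ factors as $x_0(y_\infty,y_f)$ with $y_f\in L$; since $r(y_f)$ acts trivially on $\calS^L$, one has $r(x)f=r(x_0)r(y_\infty)f\in\calS^{L''}$ for all $f\in\calS^L$. Writing
\[
r(x)f-r(x_0)f_0=r(x_0)\,r(y_\infty)(f-f_0)+r(x_0)\bigl(r(y_\infty)f_0-f_0\bigr),
\]
the joint continuity of $r_L$ on $G_\infty\times\calS^L$ together with the continuity of $r(x_0)\colon\calS^L\to\calS^{L''}$ from the previous step shows that the restricted map $V\times\calS^L\to\calS^{L''}$ is jointly continuous at $(x_0,f_0)$. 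The remaining issue is to lift this to joint continuity of $V\times U\to\calS$ for a full LF-neighborhood $U$ of $f_0$; here one invokes the characterization of bounded subsets of $\calS$ given by Lemma \hyperref[lem:041:3]{\ref*{lem:041}\ref*{lem:041:3}} and the defining-sequence description of $0$-neighborhoods in an LF-space, reducing the statement term-by-term to the Fr\'echet case handled above.

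The main obstacle is precisely this last upgrade, because LF-neighborhoods of $f_0$ are not contained in any single $\calS^{L'}$; the bookkeeping of how $r(x)$ maps between different Fr\'echet subspaces as one varies both $x$ near $x_0$ and the defining-sequence level of $f$ near $f_0$ is what makes the argument tedious and is why the authors defer it to Section \ref{sec:049}.
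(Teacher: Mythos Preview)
Your treatment of part (2) and the overall skeleton of part (1) match the paper. Two points of comparison are worth recording.

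For the continuity of each $r(x)$, the paper bypasses coset representatives and Lemma \ref{lem:009}\ref{lem:009:2} entirely, proving instead the direct seminorm estimate (Lemma \ref{lem:011})
\[
\norm{r(x,a)f}_{u,-n,c,aLa^{-1}}\le\norm{x}^n\norm{f}_{\Ad(x^{-1})u,-n,ca,L},
\]
which is shorter and, more importantly, is exactly the inequality fed into the hard step below. For joint continuity the paper reduces explicitly to three claims---continuity of each $r(x)$, joint continuity at $(1_{G(\bbA)},0)$, and continuity of each orbit map at $1_{G(\bbA)}$---rather than working at a general $(x_0,f_0)$; your decomposition is the same one, and the two formulations are equivalent after composing with the continuous operator $r(x_0)$.

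On the LF step itself: your appeal to bounded sets via Lemma \ref{lem:041}\ref{lem:041:3} is misplaced---bounded sets govern the strong dual topology on $\calS'$, not continuity of maps into $\calS$. The correct tool, which you also mention, is the $0$-neighborhood basis $\aconv\bigl(\bigcup_m B_{\sigma_m}(\varepsilon_m)\bigr)$ of Lemma \ref{lem:006}\ref{lem:006:2b}. The paper's actual mechanism is a bit more than bookkeeping: one chooses the defining sequence $(L_m)$ so that $L_{m+1}\subseteq\bigcap_{a\in L_1/L_m}aL_ma^{-1}$, which forces $r(x,a)\calS^{L_m}\subseteq\calS^{L_{m+1}}$ for all $(x,a)$ in a fixed neighborhood $W\times L_1$ of the identity. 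Given a target neighborhood $U_{(\sigma_m),(\varepsilon_m)}$, one then builds seminorms $\sigma_m'$ on $\calS^{L_m}$ (using the estimate above and the expansion of $\Ad(x^{-1})u$ in a fixed basis of $U^N(\frakg_\infty)$) so that $\sigma_m'(f)<\varepsilon_m'$ implies $\sigma_{m+1}(r(x,a)f)<\varepsilon_{m+1}$ uniformly in $(x,a)\in W\times L_1$. The level shift $m\mapsto m+1$ is the device that lets a \emph{single} neighborhood of $1_{G(\bbA)}$ work for all Fr\'echet levels simultaneously, and is the point your ``term-by-term'' phrase glosses over.
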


\begin{proof}[Proof of Proposition \ref{prop:016}(2)]
	The claim follows from the fact that the representation $ \left(r\big|_{G_\infty},\calS\right) $ is the union of its smooth subrepresentations $ \left(r_L,\calS^L\right) $ (see Lemma \hyperref[lem:009:1]{\ref*{lem:009}\ref*{lem:009:1}} and Lemma \hyperref[lem:041:2]{\ref*{lem:041}\ref*{lem:041:2}}).
\end{proof}

\vskip .2in

Next, let $ \calS'=\calS(G(k)\backslash G(\bbA))' $ be the strong dual of $ \calS $. We recall that this means that $ \calS' $ is the space of continuous linear functionals $ \calS\to\bbC $, and it is equipped with the locally convex topology generated by the seminorms $ \norm\spacedcdot_B:\calS'\to\bbR_{\geq0} $,
\begin{equation}\label{eq:051}
 \norm T_B:=\sup_{f\in B}\abs{\scal Tf}, 
\end{equation}
where $ B $ goes over bounded sets in $ \calS $ (see Definitions \ref{def:017} and \ref{def:018}). By Lemma \ref{lem:019}, $ \calS' $ is a complete locally convex topological vector space. By analogy with Casselman's terminology in \cite{casselman}, we call $ \calS' $ the space of tempered distributions on $ G(k)\backslash G(\bbA) $.

\vskip .2in 

\begin{Rem}\label{rem:054}
	\begin{enumerate}[label=\textup{(\arabic*)},leftmargin=*,align=left]
		\item\label{rem:054:1} By Lemma \hyperref[lem:041:4]{\ref*{lem:041}\ref*{lem:041:4}}, a linear functional $ T:\calS\to\bbC $ is continuous if and only if for every open compact subgroup $ L $ of $ G(\bbA_f) $, the restriction $ T\big|_{\calS^L}:\calS^L\to\bbC $ is continuous.
		\item By Lemma \hyperref[lem:041:3]{\ref*{lem:041}\ref*{lem:041:3}}, the topology on $ \calS' $ is simply the topology of uniform convergence on bounded subsets of subspaces $ \calS^L $. 
	\end{enumerate}
\end{Rem}

\vskip .2in

In the following lemma we describe $ \calS' $ as the projective limit of the spaces $ \left(\calS^L\right)' $. 
This is a special case of the well-known result \cite[Theorem 1.3.5]{gilsdorf} exhibiting a canonical isomorphism, in the category of locally convex topological vector spaces, from the strong dual of a regular inductive limit to the projective limit of strong duals. In our situation a short proof is easily attainable, and we present it for the reader's convenience.

\vskip .2in

Let $ \varprojlim\left(\calS^L\right)' $ denote the projective limit of spaces $ \left(\calS^L\right)' $, taken over the directed set of open compact subgroups $ L $ of $ G(\bbA_f) $ ordered by reverse inclusion, with restriction maps as transition morphisms. Let us equip $ \varprojlim\left(\calS^L\right)' $ with the coarsest (locally convex) topology with respect to which the canonical projections $ p_L $ onto the spaces $ \left(\calS^{L}\right)' $ are continuous.

\vskip .2in

\begin{Lem}\label{lem:107}
	The map $ \calS'\to\varprojlim\left(\calS^L\right)' $
	\begin{equation}\label{eq:053}
	T\mapsto\Big(T\big|_{\calS^L}\Big)_{L},
	\end{equation}
	is an isomorphism of topological vector spaces.
\end{Lem}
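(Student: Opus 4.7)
My plan is to verify that the map $ \Theta:T\mapsto\bigl(T\big|_{\calS^L}\bigr)_L $ is well-defined, linear, bijective, and bicontinuous, by exploiting Lemma \hyperref[lem:041:3]{\ref*{lem:041}\ref*{lem:041:3}} (description of bounded sets in $ \calS $) and Lemma \hyperref[lem:041:4]{\ref*{lem:041}\ref*{lem:041:4}} (continuity criterion for linear functionals out of $ \calS $). First I would check well-definedness: continuity of each inclusion $ \iota_L:\calS^L\hookrightarrow\calS $ (by the very definition of the inductive limit topology) ensures that $ T\big|_{\calS^L}=T\circ\iota_L $ lies in $ \left(\calS^L\right)' $, and Lemma \ref{lem:015} provides the compatibility with the transition morphisms for $ L'\subseteq L $, since $ \calS^L $ is then a topological subspace of $ \calS^{L'} $.

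Next I would establish bijectivity. Injectivity is immediate from $ \calS=\bigcup_L\calS^L $. For surjectivity, given a coherent family $ (T_L)_L\in\varprojlim\left(\calS^L\right)' $, I would define $ T:\calS\to\bbC $ piecewise by $ T(f):=T_L(f) $ whenever $ f\in\calS^L $. To see that this is well-defined, suppose $ f\in\calS^L\cap\calS^{L'} $; then $ L\cap L' $ is itself an open compact subgroup, and Lemma \ref{lem:015} gives $ \calS^L\cup\calS^{L'}\subseteq\calS^{L\cap L'} $, so the compatibility condition forces $ T_L(f)=T_{L\cap L'}(f)=T_{L'}(f) $. The resulting functional $ T $ is linear, and Lemma \hyperref[lem:041:4]{\ref*{lem:041}\ref*{lem:041:4}} promotes it to an element of $ \calS' $, because its restriction to each $ \calS^L $ equals $ T_L $, which is continuous by hypothesis. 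By construction $ \Theta(T)=(T_L)_L $.

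The final and most delicate step is matching the topologies. The strong dual topology on $ \calS' $ is generated by the seminorms $ \norm\spacedcdot_B $ indexed by bounded subsets $ B\subseteq\calS $, while the projective limit topology on $ \varprojlim\left(\calS^L\right)' $ is generated by the pullbacks along $ p_L\circ\Theta $ of the analogous seminorms indexed by bounded subsets of the individual $ \calS^L $. By Lemma \hyperref[lem:041:3]{\ref*{lem:041}\ref*{lem:041:3}}, every bounded set $ B\subseteq\calS $ is contained in some $ \calS^L $ and is bounded there; conversely, continuity of $ \iota_L $ shows that every bounded subset of $ \calS^L $ is bounded in $ \calS $. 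The identity
\[ \norm T_B=\sup_{f\in B}\abs{\scal Tf}=\sup_{f\in B}\abs{\scal{T|_{\calS^L}}f}=\norm{T\big|_{\calS^L}}_B \]
then shows that the two families of seminorms coincide under $ \Theta $, which makes $ \Theta $ a homeomorphism onto its image; combined with bijectivity this completes the argument. I do not anticipate any real obstacle: the entire proof rests on the two lemmas above, which themselves rely on the fact that $ \calS $ is a strict LF-space with defining sequence $ \bigl(\calS^{L_m}\bigr)_{m\in\bbZ_{>0}} $.
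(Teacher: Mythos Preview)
Your proposal is correct and follows essentially the same approach as the paper: both arguments use Lemma~\hyperref[lem:041:4]{\ref*{lem:041}\ref*{lem:041:4}} (equivalently Remark~\hyperref[rem:054:1]{\ref*{rem:054}\ref*{rem:054:1}}) to see that $\Theta$ is a well-defined linear isomorphism, and then invoke Lemma~\hyperref[lem:041:3]{\ref*{lem:041}\ref*{lem:041:3}} together with the identity $\norm{T}_B=\norm{T|_{\calS^L}}_B$ to match the generating seminorms on both sides. Your treatment of bijectivity is simply more explicit than the paper's one-line reference.
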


\begin{proof}
	By Remark \hyperref[rem:054:1]{\ref*{rem:054}\ref*{rem:054:1}}, the map \eqref{eq:053} is a well-defined linear isomorphism. To prove that it is a homeomorphism, we recall that the topology of $ \calS' $ is generated by the seminorms 
	\[ \norm T_B:=\sup_{f\in B}\abs{\scal Tf},\qquad T\in\calS', \]
	where $ B $ goes over all bounded sets in $ \calS $. On the other hand, by \cite[Example 5.11.6]{narici} the topology of $ \varprojlim\left(\calS^L\right)' $ is generated by the seminorms of the form $ \nu\circ p_{L_0} $, where $ L_0 $ is an open compact subgroup of $ G(\bbA_f) $ and $ \nu $ is a continuous seminorm on $ \left(\calS^{L_0}\right)' $. Thus, the topology of $ \varprojlim\left(\calS^L\right)' $ is generated by the seminorms
	\[ \norm{\left(T_L\right)_L}_{L_0,B}:=\norm{T_{L_0}}_B,\qquad (T_L)_{L}\in\varprojlim\left(\calS^L\right)', \]
	where $ L_0 $ goes over all open compact subgroups of $ G(\bbA_f) $ and $ B $ goes over all bounded subsets of $ \calS^{L_0} $. For all such $ L_0 $ and $ B $ we have
	\[ \norm{\left(T\big|_{\calS^L}\right)_L}_{L_0,B}=\norm{T\big|_{\calS^{L_0}}}_B=\norm T_B,\qquad T\in\calS', \]
	which proves the claim by Lemma \hyperref[lem:041:3]{\ref*{lem:041}\ref*{lem:041:3}}.
\end{proof}

\vskip .2in 

Next, recall that by Proposition \ref{prop:016}\ref{prop:016:1} the right regular representation $ r $ of $ G(\bbA) $ on $ \calS $ is continuous. Its contragredient representation $ \left(r',\calS'\right) $ is defined by the formula
\[ r'(x)T:=T\circ r\left(x^{-1}\right),\qquad x\in G(\bbA),\ T\in\calS'. \]
In Section \ref{sec:050} we prove the following proposition.

\vskip .2in

\begin{Prop}\label{prop:044}
	\begin{enumerate}[label=\textup{(\arabic*)},leftmargin=*,align=left]
		\item\label{prop:044:1} The contragredient representation $ \left(r',\calS'\right) $ is a continuous representation of $ G(\bbA) $.
		\item\label{prop:044:2} The representation $ \left(r'\big|_{G_\infty},\calS'\right) $ is a smooth representation of $ G_\infty $.
	\end{enumerate}
\end{Prop}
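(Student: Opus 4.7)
The plan for part~(1) is to work directly with the seminorms $\|\cdot\|_B$ generating the strong dual topology on $\calS'$: by Lemma~\ref{lem:041}\ref{lem:041:3} one may restrict attention to bounded sets $B$ lying in some $\calS^L$. Fixing $(x_0, T_0)\in G(\bbA)\times\calS'$, I split
\begin{equation*}
\|r'(x)T - r'(x_0)T_0\|_B \;\leq\; \|r'(x)(T-T_0)\|_B \;+\; \|(r'(x)-r'(x_0))T_0\|_B,
\end{equation*}
and handle each summand using the smoothness and moderate growth of $(r_L, \calS^L)$ from Lemma~\ref{lem:009}\ref{lem:009:1}. The key device is an invariance trick: choosing an open compact subgroup $L^{*} \subseteq L \cap x_{0,f}^{-1}Lx_{0,f}$, one verifies directly from the definitions that $r(x^{-1})f = r(x_\infty^{-1})\,r(x_{0,f}^{-1})f$ for every $f \in \calS^L$ and every $x_f \in x_{0,f}L^{*}$, which pushes all the estimates into the single Fr\'echet space $\calS^{L'}$ with $L' := x_{0,f}^{-1}Lx_{0,f}$.

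For the first summand, since $r(x_{0,f}^{-1})B$ is bounded in $\calS^{L'}$, the moderate growth of $(r_{L'},\calS^{L'})$ implies that $B'' := \bigcup_{x_\infty\in K} r(x_\infty^{-1})\,r(x_{0,f}^{-1})B$ is a bounded subset of $\calS^{L'}$ for any compact neighborhood $K$ of $x_{0,\infty}$. Consequently $\|r'(x)(T-T_0)\|_B \leq \|T-T_0\|_{B''}$ on the neighborhood $K\times x_{0,f}L^{*}$ of $x_0$, which is made arbitrarily small by choosing $T$ in a basic $\|\cdot\|_{B''}$-neighborhood of $T_0$. For the second summand the invariance trick gives $r(x^{-1})f - r(x_0^{-1})f = (r(x_\infty^{-1})-r(x_{0,\infty}^{-1}))\,r(x_{0,f}^{-1})f$; continuity of $T_0$ on $\calS^{L'}$ yields finitely many seminorms $\|\cdot\|_{u_i,-n_i,c_i,L'}$ dominating $|T_0|$, so the remaining task is to show that each of them, applied to $(r_{L'}(x_\infty^{-1}) - r_{L'}(x_{0,\infty}^{-1}))g$, tends to $0$ as $x_\infty\to x_{0,\infty}$ \emph{uniformly} for $g$ in the bounded set $r(x_{0,f}^{-1})B$.

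This last uniformity is the \textbf{main obstacle}. I would overcome it by a fundamental-theorem-of-calculus argument along a smooth path $\gamma\colon [0,1]\to G_\infty$ from $x_{0,\infty}$ to $x_\infty$, rewriting the difference as an integral of terms of the form $r_{L'}(\gamma(t)^{-1})(X_t\cdot g)$ with $X_t\in\frakg_\infty$. The sup-definition of the seminorms $\|\cdot\|_{u_i X_t,-n_i,c_i,L'}$, together with the boundedness in $\calS^{L'}$ of each set $\{X_t g : g\in r(x_{0,f}^{-1})B\}$ (which follows again from the smoothness and moderate growth of $r_{L'}$), gives the required estimate, linear in the length of $\gamma$ and uniform in $g$, exactly in the spirit of the argument sketched at the end of the proof of Lemma~\ref{lem:136}. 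This completes the joint continuity on $G(\bbA)\times \calS'$.

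For part~(2), once part~(1) is in hand I would follow the template of Lemma~\ref{lem:136}: for each $T\in\calS'$ and $X\in\frakg_\infty$, a Taylor expansion in $\calS^{L'}$ combined with the same uniform seminorm estimate shows that the difference quotient $t^{-1}(r'(\exp(tX))T - T)$ converges in $\calS'$ to the candidate limit $-T\circ X$, where $X$ acts on $\calS^{L'}$ via the smooth structure of $r_{L'}$ supplied by Lemma~\ref{lem:009}\ref{lem:009:1}. Iterating the argument produces derivatives of every order, establishing that $r'\big|_{G_\infty}$ is a smooth $G_\infty$-representation.
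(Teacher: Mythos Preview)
Your proposal is correct, but both parts take a more hands-on route than the paper.

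For part~(1), you establish joint continuity of $(x,T)\mapsto r'(x)T$ directly by seminorm estimates: the invariance trick freezes the non-archimedean variable, and the uniform bound over bounded $B\subseteq\calS^{L'}$ is obtained via a mean-value/fundamental-theorem-of-calculus argument. The paper instead observes (citing Warner) that it suffices to prove continuity of each orbit map $x\mapsto r'(x)T$ at $1_{G(\bbA)}$, and then reduces this to the already-known continuity of $r_L'$ on $\left(\calS^L\right)'$ via the restriction $T\mapsto T|_{\calS^L}$: for $(x,a)\in G_\infty\times L$ and $f\in B\subseteq\calS^L$ one has $r(x,a)^{-1}f=r_L(x^{-1})f$, so $\|r'(x,a)T-T\|_B=\|r_L'(x)(T|_{\calS^L})-T|_{\calS^L}\|_B$, and Lemma~\ref{lem:022}\ref{lem:022:1} finishes. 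This bypasses the uniform estimate you identify as the main obstacle.

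For part~(2), you differentiate orbit maps directly by Taylor expansion, iterating to get smoothness. The paper instead argues indirectly: using $\langle r'(u)T,f\rangle=\langle T,r(u^\#)f\rangle$, it shows that the natural ``smooth'' topology on $\left(\calS'\right)_{G_\infty\text{-smooth}}$ (generated by $\|r'(u)T\|_B$) coincides with the subspace topology (since $r(u^\#)B$ is again bounded), so the smooth vectors form a complete, hence closed, subspace; density (Harish-Chandra) then gives $\left(\calS'\right)_{G_\infty\text{-smooth}}=\calS'$. Your approach is closer in spirit to the proof of Lemma~\ref{lem:136} and is entirely self-contained; the paper's is shorter but leans on the completeness of the smooth-vector space and the density result.
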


\section{Action of Hecke algebra}\label{sec:152}

Let us fix an open compact subgroup $ L $ of $ G(\bbA_f) $. For example, $ L $ could be of the form
\[ L=\prod_{v\in S}L_v\times\prod_{v\in V_f\setminus S}K_v, \]
where $ S $ is a finite subset of $ V_f $ such that $ S_0\subseteq S $, and $ L_v $ is an open compact subgroup of $ G(k_v) $ for every $ v\in S $. Here $ S_0\subseteq V_f $ is defined as in Section \ref{sec:023}; in particular, for every $ v\in V_f\setminus S_0 $, $ K_v:=G(\calO_v) $ is a hyperspecial maximal compact subgroup of $ G(k_v) $.

\vskip .2in

The subalgebra
\begin{align}
\calH\left(G(\bbA_f),L\right)&:=\big\{\varphi\in C_c^\infty(G(\bbA_f)):\varphi(l_1xl_2)=\varphi(x)\text{ for all }l_1,l_2\in L\text{ and }x\in G(\bbA_f)\big\}\nonumber\\
&=\sum_{a\in G(\bbA_f)}\bbC\mathbbm1_{LaL}\label{eq:028}
\end{align}
of the convolution algebra $ C_c(G(\bbA_f)) $ (see \eqref{eq:070}) has a unit element $ e_L:=\frac{\mathbbm1_L}{\vol L} $. 
The algebra
\[ \calH(G(\bbA_f)):=\bigcup_{L}\calH(G(\bbA_f),L)\overset{\eqref{eq:114}}=C_c^\infty(G(\bbA_f)), \]
where $ L $ goes over all open compact subgroups of $ G(\bbA_f) $, is called the Hecke algebra of $ G(\bbA_f) $. The algebra $ \calH(G(\bbA_f)) $ is a special case of the classically defined Hecke algebra of a locally compact totally disconnected group (e.g., see \cite{flath}). In the following, we recall some classical results on the action of $ \calH(G(\bbA_f)) $ in representations of $ G(\bbA) $; these results will provide useful information (Lemmas \hyperref[lem:138:1]{\ref*{lem:138}} and \ref{lem:144}) on representations $ \left(r,\calS\right) $ and $ \left(r',\calS'\right) $.

\vskip .2in

Let $ \pi $ be a continuous representation of $ G(\bbA) $ on a complete locally convex topological vector space $ V $. For $ \calG\in\left\{G(\bbA),G_\infty,G(\bbA_f)\right\} $, the continuous representation $ \pi\big|_\calG $ of $ \calG $ on $ V $ induces a representation of the convolution algebra $ C_c(\calG) $ on $ V $ by continuous linear operators $ \pi(\alpha):V\to V $,
\[ \pi(\alpha)v:=\int_\calG\alpha(y)\,\pi(y)v\,dy,\qquad\alpha\in C_c(\calG),\ v\in V \] 
(see \cite[\S2]{hc}).
In particular, the algebra $ \calH(G(\bbA_f)) $ acts on $ V $ by continuous linear operators $ \pi(\varphi):V\to V $,
\[ \pi(\varphi)v:=\int_{G(\bbA_f)}\varphi(y)\,\pi(1_{G_\infty},y)v\,dy,\qquad\varphi\in\calH(G(\bbA_f)),\ v\in V. \]

\vskip .2in

Let $ L $ be an open compact subgroup of $ G(\bbA_f) $. We define the subspace $ V^L $ of $ L $-invariants in $ V $ by
\[ V^L:=\left\{v\in V:\pi(1_{G_\infty},l)v=v\text{ for all }l\in L\right\}. \]
The space $ V^L $ is obviously a closed $ G_\infty $-invariant subspace of $ V $.

\vskip .2in

\begin{Lem}
	\begin{enumerate}[label=\textup{(\arabic*)},leftmargin=*,align=left]\label{lem:133}
		\item\label{lem:133:1} We have
		\begin{equation}\label{eq:132}
		\pi\big(\calH(G(\bbA_f),L)\big)V\subseteq V^L.
		\end{equation}
		\item\label{lem:133:2} The space $ V^L $ is an $ \calH(G(\bbA_f),L) $-invariant subspace of $ V $, and we have
		\begin{equation}\label{eq:134}
		\pi\left(\frac{\mathbbm1_{LaL}}{\vol L}\right)v=\sum_{b\in LaL/L}\pi(1_{G_\infty},b)v,\qquad a\in G(\bbA_f),\ v\in V^L.
		\end{equation}
		\item\label{lem:133:4} The map $ \spacedcdot^L:V\to V $,
		\[ v^L:=\pi(e_L)v=\frac1{\vol L}\int_L\pi(1_{G_\infty},l)v\,dl, \]
		is a continuous $ G_\infty $-equivariant projection operator onto $ V^L $.
	\end{enumerate}
\end{Lem}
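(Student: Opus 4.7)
The plan is to deduce all three parts from a single technique—moving continuous linear operators through Gelfand--Pettis integrals via Theorem \ref{thm:148}—combined with left invariance of the Haar measure on $G(\bbA_f)$.

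For \ref{lem:133:1}, I would fix $\varphi\in\calH(G(\bbA_f),L)$, $v\in V$ and $l\in L$, and show that
\[
\pi(1_{G_\infty},l)\,\pi(\varphi)v=\int_{G(\bbA_f)}\varphi(y)\,\pi(1_{G_\infty},ly)v\,dy.
\]
The justification is to pair both sides against an arbitrary $T\in V'$ and apply the defining property of the Gelfand--Pettis integral, noting that $T\circ\pi(1_{G_\infty},l)\in V'$. The substitution $y\mapsto l^{-1}y$, which preserves the Haar measure $dy$, together with the left $L$-invariance of $\varphi$, turns the right-hand side back into $\pi(\varphi)v$, giving $\pi(\varphi)v\in V^L$.

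For \ref{lem:133:2}, the invariance $\pi\bigl(\calH(G(\bbA_f),L)\bigr)V^L\subseteq V^L$ is immediate from \ref{lem:133:1}. For the formula, I would decompose $\mathbbm1_{LaL}=\sum_{b\in LaL/L}\mathbbm1_{bL}$ (a finite sum because $LaL$ is compact and the cosets $bL$ are open), substitute into the definition \eqref{eq:085} of $\pi(\mathbbm1_{LaL}/\vol L)v$, and use the substitution $y=bl$ on each piece to obtain
\[
\pi\!\left(\frac{\mathbbm1_{LaL}}{\vol L}\right)v=\frac{1}{\vol L}\sum_{b\in LaL/L}\int_L\pi(1_{G_\infty},bl)v\,dl.
\]
For $v\in V^L$ the integrand equals the constant $\pi(1_{G_\infty},b)v$, which can be pulled out of the Gelfand--Pettis integral by the same duality argument, yielding the claimed identity.

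For \ref{lem:133:4}, the element $e_L$ lies in $\calH(G(\bbA_f),L)$, so \ref{lem:133:1} gives $v^L\in V^L$ for every $v\in V$; when $v\in V^L$ the integrand is constantly $v$, so $v^L=v$, and $\spacedcdot^L$ is a projection onto $V^L$. The continuity of $\pi(e_L):V\to V$ is part of the general formalism of the convolution action \eqref{eq:085}. For $G_\infty$-equivariance, I would exploit that $(x,1_{G(\bbA_f)})$ and $(1_{G_\infty},l)$ commute in $G(\bbA)=G_\infty\times G(\bbA_f)$ for every $x\in G_\infty$ and $l\in L$, so that $\pi(x,1_{G(\bbA_f)})$ passes through the integral defining $v^L$ to give $\pi(x,1_{G(\bbA_f)})v^L=\bigl(\pi(x,1_{G(\bbA_f)})v\bigr)^L$. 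The only recurring technical point—justifying the interchange of continuous linear operators with Gelfand--Pettis integrals—is standard once Theorem \ref{thm:148} is in place; everything else is bookkeeping with translation invariance and the coset decomposition of $LaL$.
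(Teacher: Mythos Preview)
Your proof is correct and follows essentially the same approach as the paper's own proof: passing continuous linear operators through the Gelfand--Pettis integral (the paper cites \eqref{eq:149} directly rather than pairing against $T\in V'$), substituting $y\mapsto l^{-1}y$ and using left $L$-invariance for \ref{lem:133:1}, decomposing $LaL$ into right $L$-cosets for \ref{lem:133:2}, and exploiting the commutation of $G_\infty$ and $G(\bbA_f)$ for the equivariance in \ref{lem:133:4}.
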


\begin{proof}
	\ref{lem:133:1} We have
	\begin{align*}
	\pi(1_{G_\infty},l)\pi(\varphi)v&\overset{\eqref{eq:149}}=\int_{G(\bbA_f)}\varphi(y)\,\pi(1_{G_\infty},ly)v\,dy\\
	&\overset{\phantom{\eqref{eq:149}}}=\int_{G(\bbA_f)}\varphi\left(l^{-1}y\right)\pi(1_{G_\infty},y)v\,dy\\
	&\overset{\phantom{\eqref{eq:149}}}=\int_{G(\bbA_f)}\varphi(y)\,\pi(1_{G_\infty},y)v\,dy\\
	&\overset{\phantom{\eqref{eq:149}}}=\pi(\varphi)v,\qquad\varphi\in\calH(G(\bbA_f),L),\ v\in V,
	\end{align*}
	where the third equality holds since $ \varphi $ is $ L $-invariant on the left. This proves \eqref{eq:132}. 
	
	\vskip .2in
	
	\ref{lem:133:2} The space $ V^L $ is an $ \calH(G(\bbA_f),L) $-invariant subspace of $ V $ by \ref{lem:133:1}. To prove \eqref{eq:134}, note that for all $ a\in G(\bbA_f) $ and $ v\in V^L $,
	\begin{align*}
	\pi\left(\frac{\mathbbm1_{LaL}}{\vol L}\right)v&=\frac1{\vol L}\int_{LaL}\pi(1_{G_\infty},y)v\,dy\\
	&=\frac1{\vol L}\sum_{b\in LaL/L}\int_{bL}\pi(1_{G_\infty},y)v\,dy\\
	&=\sum_{b\in LaL/L}\pi(1_{G_\infty},b)v.
	\end{align*}
	
	\ref{lem:133:4} We have that $ \pi(e_L)V\subseteq V^L $ by \ref{lem:133:1}, and one checks easily that $ v^L=v $ for all $ v\in V^L $ and, using \eqref{eq:149}, that the (continuous) operator $ \pi(e_L) $ is $ G_\infty $-equivariant.
\end{proof}

\vskip .2in

As our notation suggests, the subspace of $ L $-invariants in $ \calS $ coincides with the space $ \calS^L $ defined in Section \ref{sec:104}. This follows easily from the definitions and \eqref{eq:135}. In particular, we have the following lemma.

\vskip .2in

\begin{Lem}\label{lem:138}
	\begin{enumerate}[label=\textup{(\arabic*)},leftmargin=*,align=left]
		\item\label{lem:138:1} The space $ \calS^L $ is an $ \calH(G(\bbA_f),L) $-module under the action defined by
		\[ r\left(\frac{\mathbbm1_{LaL}}{\vol L}\right)f=\sum_{b\in LaL/L}r(1_{G_\infty},b)f,\qquad a\in G(\bbA_f),\ f\in\calS^L. \]
		\item\label{lem:138:2} The linear operator $ \spacedcdot^L:\calS\to\calS $ defined by the formula
		\begin{equation}\label{eq:136}
		f^L=\frac1{\vol L}\int_Lr(1_{G_\infty},l)f\,dl,\quad f\in\calS,
		\end{equation}
		or equivalently by the formula
		\begin{equation}\label{eq:137}
		f^L(x,c)=\frac1{\vol L}\int_Lf(x,cl)\,dl,\quad f\in\calS,\ x\in G_\infty,\ c\in G(\bbA_f),
		\end{equation}
		is a continuous $ G_\infty $-equivariant projection operator onto $ \calS^L $.
	\end{enumerate}
\end{Lem}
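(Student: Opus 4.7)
My plan is to reduce both parts to the preceding Lemma~\ref{lem:133}, applied with $\pi = r$ and $V = \calS$. This is legitimate because $\calS$ is a complete locally convex space (Lemma~\hyperref[lem:041:1]{\ref*{lem:041}\ref*{lem:041:1}}) and $(r, \calS)$ is a continuous representation of $G(\bbA)$ (Proposition~\hyperref[prop:016:1]{\ref*{prop:016}\ref*{prop:016:1}}). However, the reduction requires me first to identify the subspace of $L$-invariants of $(r, \calS)$, in the sense of Section~\ref{sec:152}, with the Fréchet subspace $\calS^L \subseteq \calS$ introduced in Section~\ref{sec:104}. That every $f \in \calS^L$ is $L$-invariant is built into the definition; conversely, any $f \in \calS$ lies in some $\calS^{L'}$ by \eqref{eq:106}, and the inequality \eqref{eq:135} together with Lemma~\ref{lem:015} shows that the $L$-fixed vectors in $\calS^{L'}$ are precisely the elements of $\calS^L$.

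Once this identification is in place, part~\ref{lem:138:1} is a direct application of Lemma~\hyperref[lem:133:2]{\ref*{lem:133}\ref*{lem:133:2}}, whose formula \eqref{eq:134} specializes to exactly the asserted action of $\mathbbm1_{LaL} / \vol L$ on $\calS^L$. Similarly, Lemma~\hyperref[lem:133:4]{\ref*{lem:133}\ref*{lem:133:4}} supplies formula \eqref{eq:136} for $f^L = r(e_L) f$ together with the continuity, the $G_\infty$-equivariance, and the projection property onto $\calS^L$ needed in part~\ref{lem:138:2}.

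The only remaining task is to verify that the Gelfand--Pettis definition \eqref{eq:136} agrees with the pointwise description \eqref{eq:137}. The integral \eqref{eq:136} is characterized (Theorem~\ref{thm:148}) by the identity $\scal T{f^L} = \frac{1}{\vol L} \int_L \scal T{r(1_{G_\infty}, l) f}\, dl$ for every $T \in \calS'$, so I would specialize this to the point-evaluation functional $\mathrm{ev}_{(x, c)} : f \mapsto f(x, c)$. The one step I would flag as the main (though minor) obstacle is checking that $\mathrm{ev}_{(x, c)} \in \calS'$: by Lemma~\hyperref[lem:041:4]{\ref*{lem:041}\ref*{lem:041:4}} it suffices to establish continuity on each Fréchet subspace $\calS^{L'}$, and there the one-line bound $\abs{f(x, c)} \leq \norm f_{1, 0, c, L'}$ (valid since $\norm x_{\Gamma_{c, L'} \backslash G_\infty} \geq 1$) does it. With that in hand, applying $\mathrm{ev}_{(x, c)}$ to \eqref{eq:136} produces exactly \eqref{eq:137}, completing the proof.
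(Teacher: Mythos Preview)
Your proposal is correct and essentially identical to the paper's proof: both parts are reduced to Lemma~\ref{lem:133}, and the equivalence of \eqref{eq:136} and \eqref{eq:137} is established by showing $\mathrm{ev}_{(x,c)}\in\calS'$ via the same one-line estimate $\abs{f(x,c)}\leq\norm f_{1,0,c,L'}$ on each $\calS^{L'}$. The identification of the $L$-invariants in $\calS$ with the space $\calS^L$, which you include in your argument, is stated in the paper just before the lemma rather than in its proof, but this is only a difference in placement.
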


\begin{proof}
	\ref{lem:138:1} This follows from Lemma \hyperref[lem:133:2]{\ref*{lem:133}\ref*{lem:133:2}}.
	
	\vskip .2in
	
	\ref{lem:138:2}	The equality \eqref{eq:137} follows from \eqref{eq:136} by applying the evaluation map $ \mathrm{ev}_{(x,c)}:\calS\to\bbC $, $ f\mapsto f(x,c) $, and using \eqref{eq:150}. To justify this last step, note that $ \mathrm{ev}_{(x,c)} $ is a continuous linear functional on $ \calS $ by Remark \hyperref[rem:054:1]{\ref*{rem:054}\ref*{rem:054:1}} and the following estimate: for every open compact subgroup $ L' $ of $ G(\bbA_f) $,
	\[ \abs{\mathrm{ev}_{(x,c)}(f)}=\abs{f(x,c)}\leq\norm f_{1,0,c,L'},\qquad f\in\calS^{L'}. \]
	All the other claims in \ref{lem:138:2} follow from Lemma \hyperref[lem:133:4]{\ref*{lem:133}\ref*{lem:133:4}}.
\end{proof}

\vskip .2in

By applying Lemma \hyperref[lem:133:4]{\ref*{lem:133}\ref*{lem:133:4}} to the representation $ \left(r',\calS'\right) $, we obtain the following lemma.

\vskip .2in

\begin{Lem}\label{lem:144}
	The linear operator $ \spacedcdot^L:\calS'\to\calS' $ defined by the formula
	\begin{equation}\label{eq:139}
	T^L=\frac1{\vol L}\int_Lr'(1_{G_\infty},l)T\,dl,\qquad T\in\calS',
	\end{equation}
	or equivalently by the formula
	\begin{equation}\label{eq:140}
	\scal{T^L}f=\scal T{f^L},\qquad T\in\calS',\ f\in\calS,
	\end{equation}
	is a continuous $ G_\infty $-equivariant projection operator onto $ \left(\calS'\right)^L $.
\end{Lem}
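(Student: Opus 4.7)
The plan is to deduce this lemma from Lemma \hyperref[lem:133:4]{\ref*{lem:133}\ref*{lem:133:4}} applied to the contragredient representation $(r',\calS')$. By Proposition \hyperref[prop:044:1]{\ref*{prop:044}\ref*{prop:044:1}}, $(r',\calS')$ is a continuous representation of $G(\bbA)$, and by Lemma \ref{lem:019} the space $\calS'$ is complete. Thus the hypotheses of Lemma \ref{lem:133} are satisfied with $\pi=r'$ and $V=\calS'$; specializing its formula $\pi(e_L)v=\frac{1}{\vol L}\int_L\pi(1_{G_\infty},l)v\,dl$ to this setting shows immediately that \eqref{eq:139} defines a continuous, $G_\infty$-equivariant projection of $\calS'$ onto $(\calS')^L$. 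All that is left is to check the equivalent description \eqref{eq:140}.

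To pass from \eqref{eq:139} to \eqref{eq:140} I would fix $f\in\calS$ and test both formulas against it. The evaluation functional $\mathrm{ev}_f\colon\calS'\to\bbC$, $S\mapsto\scal{S}{f}$, is continuous because $\{f\}$ is a bounded subset of $\calS$, so $|\mathrm{ev}_f(S)|=\norm{S}_{\{f\}}$. Applying $\mathrm{ev}_f$ to both sides of \eqref{eq:139}, the defining property of the Gelfand--Pettis integral (Theorem \ref{thm:148}), combined with the duality $\scal{r'(1_{G_\infty},l)T}{f}=\scal{T}{r(1_{G_\infty},l^{-1})f}$, gives
\[ \scal{T^L}{f}=\frac{1}{\vol L}\int_L\scal{T}{r(1_{G_\infty},l^{-1})f}\,dl. \]
Since $T$ is a continuous linear functional on $\calS$ and $l\mapsto r(1_{G_\infty},l^{-1})f$ is continuous from $L$ into $\calS$ by Proposition \hyperref[prop:016:1]{\ref*{prop:016}\ref*{prop:016:1}}, I can pull $T$ back inside the integral (again via Theorem \ref{thm:148}) to obtain $\scal{T^L}{f}=\scal{T}{\tfrac{1}{\vol L}\int_L r(1_{G_\infty},l^{-1})f\,dl}$. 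A change of variable $l\mapsto l^{-1}$ on the compact, hence unimodular, group $L$ identifies the inner integral with $f^L$ as defined in \eqref{eq:136}, yielding \eqref{eq:140}.

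No substantive obstacle is anticipated: the argument is a mechanical transfer of Lemma \ref{lem:133} to the dual side, combined with the routine duality $\scal{r'(x)T}{f}=\scal{T}{r(x^{-1})f}$. The only two small points requiring care are the continuity of $\mathrm{ev}_f$ on the strong dual, which is immediate from the very definition of the seminorms $\norm{\spacedcdot}_B$, and the change of variable on $L$, which is legitimate because every compact group is unimodular. In particular, the proof does not need to revisit the Fr\'echet structure of the spaces $\calS^{L'}$ or the inductive-limit topology on $\calS$.
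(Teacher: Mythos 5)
Your proof is correct and follows essentially the same route as the paper: the paper likewise applies Lemma \hyperref[lem:133:4]{\ref*{lem:133}\ref*{lem:133:4}} to $(r',\calS')$ for continuity, $G_\infty$-equivariance and the projection property, and derives \eqref{eq:140} from \eqref{eq:139} by testing against the continuous evaluation functionals $\mathrm{ev}_f$ and invoking the Gelfand--Pettis property \eqref{eq:150} together with \eqref{eq:136}. Your only additions -- explicitly checking the hypotheses of Lemma \ref{lem:133} via Proposition \hyperref[prop:044:1]{\ref*{prop:044}\ref*{prop:044:1}} and Lemma \ref{lem:019}, and spelling out the change of variable $l\mapsto l^{-1}$ on the compact group $L$ -- are details the paper leaves implicit.
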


\begin{proof}
	To derive \eqref{eq:140} from \eqref{eq:139}, we note that for every $ f\in\calS $, the evaluation map $ \mathrm{ev}_f:\calS'\to\bbC $, $ \mathrm{ev}_f(T):=\scal Tf $, is a continuous linear functional on $ \calS' $; namely,
	\[ \abs{\mathrm{ev}_f(T)}=\abs{\scal Tf}=\norm T_{\{f\}},\qquad T\in\calS'. \]
	Thus, by applying $ \mathrm{ev}_f $ to \eqref{eq:139}, we obtain
	\begin{align*}
	\scal{T^L}f&\overset{\eqref{eq:150}}=\frac1{\vol L}\int_L\scal{r'(1_{G_\infty},l)T}f\,dl\\
	&\overset{\phantom{\eqref{eq:150}}}=\frac1{\vol L}\int_L\scal{T}{r\left(1_{G_\infty},l^{-1}\right)f}\,dl\\
	&\overset{\eqref{eq:136}}{\underset{\eqref{eq:150}}=}\scal T{f^L},\qquad T\in\calS',\ f\in\calS.
	\end{align*}
	The rest of the lemma follows from Lemma \hyperref[lem:133:4]{\ref*{lem:133}\ref*{lem:133:4}}.
\end{proof}

\section{G\aa rding subspace of $ \calS(G(k)\backslash G(\bbA))' $}\label{sec:117}

The goal of this section is to determine the G\aa rding subspace of $ \left(r',\calS'\right) $ (see \eqref{eq:087}). We start by studying in more detail the G\aa rding subspace of a general continuous representation $ \pi $ of $ G(\bbA) $ on a complete locally convex topological vector space $ V $. Let such $ \pi $ be fixed, and let us define the following $ G(\bbA) $-invariant subspaces of $ V $:
\begin{align*}
	V_{G_\infty\textup{-smooth}}&:=\left\{v\in V:\pi(\spacedcdot,1_{G(\bbA_f)})v\in C^\infty(G_\infty)\right\},\\
	V_{G(\bbA_f)\textup{-smooth}}&:=\bigcup_{L}V^L,
\end{align*}
where $ L $ goes over all open compact subgroups of $ G(\bbA_f) $, and
\[ V^\infty:=V_{G_\infty\textup{-smooth}}\cap V_{G(\bbA_f)\textup{-smooth}}. \]

\vskip .2in

\begin{Rem}
	Since $ \left(r'\big|_{G_\infty},\calS'\right) $ is a smooth representation of $ G_\infty $ (Proposition \hyperref[prop:044:2]{\ref*{prop:044}\ref*{prop:044:2}}), we have
	\begin{equation}\label{eq:151}
	\left(\calS'\right)^\infty=\left(\calS'\right)_{G(\bbA_f)\text{-smooth}}.
	\end{equation}
\end{Rem}

\vskip .2in

By the discussion after \eqref{eq:087}, the G\aa rding subspace
\begin{equation}\label{eq:143}
V_{G(\bbA)\textup{-G\aa rding}}:=\mathrm{span}_\bbC\left\{\pi(\alpha)v:\alpha\in C_c^\infty(G_\infty)\otimes\calH(G(\bbA_f)),\ v\in V\right\}
\end{equation}
is dense in $ V $. On the other hand, for every open compact subgroup $ L $ of $ G(\bbA_f) $,
\begin{equation}\label{eq:144}
\left(V^L\right)_{G_\infty\textup{-G\aa rding}}:=\mathrm{span}_\bbC\left\{\pi(\psi)v:\psi\in C_c^\infty(G_\infty),\ v\in V^L\right\} 
\end{equation}
is a dense subspace of $ V^L $. A simple relation between the spaces \eqref{eq:143} and \eqref{eq:144} is given in the following lemma.

\vskip .2in

\begin{Lem}
	We have
	\begin{equation}\label{eq:142}
	V_{G(\bbA)\textup{-G\aa rding}}=\bigcup_L\left(V^L\right)_{G_\infty\textup{-G\aa rding}}. 
	\end{equation}
\end{Lem}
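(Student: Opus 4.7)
The plan is to prove both inclusions separately by reducing to a single generator on each side, using the factorization
\[ \pi(\psi \otimes \varphi) v = \pi(\psi)\,\pi(\varphi)\,v, \qquad \psi \in C_c^\infty(G_\infty),\ \varphi \in \calH(G(\bbA_f)),\ v \in V, \]
where $\pi(\psi)$ and $\pi(\varphi)$ are the operators attached to the restrictions $\pi\big|_{G_\infty}$ and $\pi\big|_{G(\bbA_f)}$. This identity is obtained by unwinding the Gelfand--Pettis integrals (testing against an arbitrary continuous linear functional on $V$), applying Fubini via \eqref{eq:110}, and using the commuting product $\pi(x_\infty, x_f) = \pi(x_\infty, 1_{G(\bbA_f)})\,\pi(1_{G_\infty}, x_f)$ available from the direct product structure $G(\bbA) = G_\infty \times G(\bbA_f)$.

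For the inclusion $V_{G(\bbA)\textup{-G\aa rding}} \subseteq \bigcup_L (V^L)_{G_\infty\textup{-G\aa rding}}$, it suffices to treat a single generator $\pi(\psi \otimes \varphi)v$ with $\psi \in C_c^\infty(G_\infty)$, $\varphi \in \calH(G(\bbA_f))$, and $v \in V$. By \eqref{eq:114}, the element $\varphi$ lies in $\calH(G(\bbA_f), L)$ for some open compact subgroup $L$ of $G(\bbA_f)$. Lemma \hyperref[lem:133:1]{\ref*{lem:133}\ref*{lem:133:1}} then gives $\pi(\varphi)v \in V^L$, and the factorization yields $\pi(\psi \otimes \varphi)v = \pi(\psi)\,\pi(\varphi)v \in (V^L)_{G_\infty\textup{-G\aa rding}}$, as required.

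For the reverse inclusion, let $\pi(\psi)w$ be a generator of $(V^L)_{G_\infty\textup{-G\aa rding}}$ with $\psi \in C_c^\infty(G_\infty)$ and $w \in V^L$. By Lemma \hyperref[lem:133:4]{\ref*{lem:133}\ref*{lem:133:4}} we can write $w = \pi(e_L)w$, where $e_L = \mathbbm1_L/\vol L \in \calH(G(\bbA_f))$. Applying the factorization once more,
\[ \pi(\psi)w = \pi(\psi)\,\pi(e_L)\,w = \pi(\psi \otimes e_L)\,w, \]
and since $\psi \otimes e_L \in C_c^\infty(G_\infty) \otimes \calH(G(\bbA_f))$, this vector lies in $V_{G(\bbA)\textup{-G\aa rding}}$.

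The only real obstacle is justifying the factorization identity; everything else is a formal manipulation of definitions combined with the lemmas already established in Section \ref{sec:152}. The Fubini step requires the completeness of $V$ (so that the Gelfand--Pettis integrals make sense, cf.~Theorem \ref{thm:148}) and the product decomposition of $G(\bbA)$, but both of these are standing assumptions. No further input is needed.
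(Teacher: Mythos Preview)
Your proof is correct and follows essentially the same approach as the paper: both directions rely on the factorization $\pi(\psi\otimes\varphi)=\pi(\psi)\pi(\varphi)$, with Lemma~\hyperref[lem:133:1]{\ref*{lem:133}\ref*{lem:133:1}} giving $\pi(\varphi)v\in V^L$ for the forward inclusion and Lemma~\hyperref[lem:133:4]{\ref*{lem:133}\ref*{lem:133:4}} supplying $w=\pi(e_L)w$ for the reverse. The paper organizes this by first isolating, for each fixed $L$, the identity $\left(V^L\right)_{G_\infty\textup{-G\aa rding}}=\mathrm{span}_\bbC\{\pi(\alpha)v:\alpha\in C_c^\infty(G_\infty)\otimes\calH(G(\bbA_f),L),\ v\in V\}$, but the content is the same.
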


\begin{proof}
	It suffices to prove the following equality for every open compact subgroup $ L $ of $ G(\bbA_f) $:
	\[ \left(V^L\right)_{G_\infty\textup{-G\aa rding}}=\mathrm{span}_\bbC\left\{\pi(\alpha)v:\alpha\in C_c^\infty(G_\infty)\otimes\calH(G(\bbA_f),L),\ v\in V\right\}. \]
	\fbox{$ \supseteq $}: For $ \psi\in C_c^\infty(G_\infty) $, $ \varphi\in\calH(G(\bbA_f),L) $ and $ v\in V $, we have
	\[ \pi(\psi\otimes\varphi)v=\pi(\psi)\pi(\varphi)v\overset{\text{Lemma \hyperref[lem:133:1]{\ref*{lem:133}\ref*{lem:133:1}}}}\in\pi(\psi)V^L\subseteq\left(V^L\right)_{G_\infty\textup{-G\aa rding}}. \]
	\fbox{$ \subseteq $}: For $ \psi\in C_c^\infty(G_\infty) $ and $ v\in V^L $, we have
	\[ \pi(\psi)v\overset{\text{Lemma \hyperref[lem:133:4]{\ref*{lem:133}\ref*{lem:133:4}}}}=\pi(\psi)\pi(e_L)v=\pi(\psi\otimes e_L)v, \]
	which implies the claim.
\end{proof}

\vskip .2in

\begin{Cor}\label{cor:141}
	\begin{enumerate}[label=\textup{(\arabic*)},leftmargin=*,align=left]
		\item\label{cor:141:1} $ V_{G(\bbA)\textup{-G\aa rding}}\subseteq V^\infty $.
		\item\label{cor:141:2} The subspace $ V^\infty $ is dense in $ V $.
	\end{enumerate}
\end{Cor}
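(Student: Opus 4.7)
The plan is to use the description \eqref{eq:142} of $V_{G(\bbA)\text{-G\aa rding}}$ as the union over all open compact subgroups $L$ of $(V^L)_{G_\infty\text{-G\aa rding}}$. Both parts of the corollary then reduce to standard facts.

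For part \ref{cor:141:1}, I would pick an arbitrary $v_0 \in V_{G(\bbA)\text{-G\aa rding}}$. By \eqref{eq:142}, there exists an open compact subgroup $L \subseteq G(\bbA_f)$ such that $v_0 \in (V^L)_{G_\infty\text{-G\aa rding}}$, so $v_0$ is a finite $\bbC$-linear combination of vectors $\pi(\psi)v$ with $\psi \in C_c^\infty(G_\infty)$ and $v \in V^L$. First I would observe that $V^L$ is $G_\infty$-invariant and that $\pi(\psi)$ preserves $V^L$: indeed, since $G_\infty$ and $G(\bbA_f)$ commute inside $G(\bbA) = G_\infty \times G(\bbA_f)$, for any $l \in L$,
\[
\pi(1_{G_\infty},l)\pi(\psi)v = \int_{G_\infty}\psi(x)\,\pi(x,1_{G(\bbA_f)})\pi(1_{G_\infty},l)v\,dx = \pi(\psi)v.
\]
Hence $\pi(\psi)v \in V^L \subseteq V_{G(\bbA_f)\text{-smooth}}$. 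On the other hand, $\pi(\psi)v \in V_{G_\infty\text{-smooth}}$ by the classical fact that vectors in the $G_\infty$-G\aa rding subspace are $G_\infty$-smooth (this is the differentiation-under-the-integral argument that underlies the classical G\aa rding theorem). Thus $\pi(\psi)v \in V^\infty$, and taking linear combinations gives $v_0 \in V^\infty$.

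For part \ref{cor:141:2}, I would invoke the density statement noted immediately after \eqref{eq:087}: the G\aa rding subspace $V_{G(\bbA)\text{-G\aa rding}}$ is dense in $V$ (via the approximation of unity $\psi_n \otimes \frac{\mathbbm1_{L_n}}{\vol L_n}$). Combined with \ref{cor:141:1}, this forces $V^\infty$ to be dense in $V$.

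The arguments are routine; the only minor point requiring care is the commutation used to show $\pi(\psi)$ preserves $V^L$, which relies on the product decomposition $G(\bbA) = G_\infty \times G(\bbA_f)$ and the fact that the Gelfand--Pettis integral commutes with continuous linear operators (Theorem \ref{thm:148}). No new obstacle arises beyond assembling these observations.
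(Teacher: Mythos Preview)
Your proof is correct and follows essentially the same approach as the paper: both use \eqref{eq:142} to reduce \ref{cor:141:1} to the classical inclusion $V_{G_\infty\text{-G\aa rding}}\subseteq V_{G_\infty\text{-smooth}}$ (which the paper cites as \cite[Lemma 2]{hc}), and both deduce \ref{cor:141:2} from \ref{cor:141:1} together with the density of the G\aa rding subspace noted after \eqref{eq:087}. Your explicit verification that $\pi(\psi)$ preserves $V^L$ is a harmless elaboration of something the paper treats as already implicit in the definition of $(V^L)_{G_\infty\text{-G\aa rding}}$, since $V^L$ is a closed $G_\infty$-invariant subspace.
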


\begin{proof}
	The claim \ref{cor:141:1} follows from \eqref{eq:142}, using that $ V_{G_\infty\text{-G\aa rding}}\subseteq V_{G_\infty\text{-smooth}} $ by \cite[Lemma 2]{hc}, and \ref{cor:141:2} follows from \ref{cor:141:1} since $ V_{G(\bbA)\textup{-G\aa rding}} $ is dense in $ V $. 
\end{proof}

\vskip .2in

The key to determining the G\aa rding subspace $ \left(\calS'\right)_{G(\bbA)\text{-G\aa rding}} $ is the $ G_\infty $-equivalence $ \left(\calS'\right)^L\to\left(\calS^L\right)' $ of the following proposition.

\vskip .2in

\begin{Prop}\label{lem:054}
	Let $ L $ be an open compact subgroup of $ G(\bbA_f) $.
	Then, we have the following commutative diagram of continuous $ G_\infty $-equivariant linear operators:
	\begin{equation}\label{eq:045}
	\begin{tikzcd}[row sep=huge, column sep = huge]
	\left(r'\big|_{G_\infty},\calS'\right) \arrow[r, twoheadrightarrow, "T\mapsto T|_{\calS^L}"] \arrow[d, twoheadrightarrow, "T\mapsto T^L"']
	& \left(r_L',\left(\calS^L\right)'\right) \arrow[d, equal] \\
	\left(r'\big|_{G_\infty},\left(\calS'\right)^L\right) \arrow[r, rightarrow, "T\mapsto T|_{\calS^L}"', "\simeq"]
	& \left(r_L',\left(\calS^L\right)'\right)
	\end{tikzcd}.
	\end{equation}
	The bottom map is an equivalence of representations of $ G_\infty $.
\end{Prop}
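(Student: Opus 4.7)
The plan is to construct the diagram piece by piece, verify commutativity, and then read off the isomorphism of the bottom arrow. All non-trivial work will be pushed onto the projection operator $f\mapsto f^L$ of Lemma~\hyperref[lem:138:2]{\ref*{lem:138}\ref*{lem:138:2}} and its dual of Lemma~\ref{lem:144}, both of which are already in place.

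First, I would treat the top horizontal arrow. The inclusion $\iota_L:\calS^L\hookrightarrow\calS$ is continuous (in fact, by Lemma~\hyperref[lem:041:2]{\ref*{lem:041}\ref*{lem:041:2}}, $\calS^L$ is a closed subspace of $\calS$), so $T\mapsto T\circ\iota_L$ sends $\calS'$ into $(\calS^L)'$. Bounded subsets of $\calS^L$ are bounded in $\calS$, so $\|T|_{\calS^L}\|_B=\|T\|_{\iota_L(B)}$ for any such $B$, which gives continuity between the strong duals. For $G_\infty$-equivariance, note $\calS^L$ is $G_\infty$-stable (since $G_\infty$ commutes with $G(\bbA_f)$ in $G(\bbA)$) and $r|_{\calS^L}=r_L$, so the equivariance is formal: for $x\in G_\infty$ and $f\in\calS^L$,
\[\scal{(r'(x)T)|_{\calS^L}}{f}=\scal{T}{r(x^{-1})f}=\scal{T|_{\calS^L}}{r_L(x^{-1})f}=\scal{r_L'(x)(T|_{\calS^L})}{f}.\]
For surjectivity, given $S\in(\calS^L)'$ I would set $\scal{T_S}{f}:=\scal{S}{f^L}$ for $f\in\calS$; continuity of $T_S$ on $\calS$ follows from continuity of $f\mapsto f^L$, and $f^L=f$ on $\calS^L$ gives $T_S|_{\calS^L}=S$.

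Next, commutativity. By \eqref{eq:140}, for $T\in\calS'$ and $f\in\calS^L$,
\[\scal{T^L|_{\calS^L}}{f}=\scal{T^L}{f}=\scal{T}{f^L}=\scal{T}{f}=\scal{T|_{\calS^L}}{f},\]
since $f^L=f$ on $\calS^L$. The left vertical arrow was already shown in Lemma~\ref{lem:144} to be a continuous $G_\infty$-equivariant projection onto $(\calS')^L$.

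For the bottom map, which is the restriction of the top map to $(\calS')^L$, I would argue bijectivity and then bicontinuity. Injectivity: if $T\in(\calS')^L$ then $T=T^L$ (as the projection is the identity on its image), so $T|_{\calS^L}=0$ forces $\scal{T}{f}=\scal{T}{f^L}=\scal{T|_{\calS^L}}{f^L}=0$ for every $f\in\calS$. Surjectivity: the extension $T_S$ constructed above actually lies in $(\calS')^L$, since for $l\in L$ a change of variables in \eqref{eq:137} gives $(r(1_{G_\infty},l^{-1})f)^L=f^L$, hence
\[\scal{r'(1_{G_\infty},l)T_S}{f}=\scal{T_S}{r(1_{G_\infty},l^{-1})f}=\scal{S}{(r(1_{G_\infty},l^{-1})f)^L}=\scal{S}{f^L}=\scal{T_S}{f}.\]
Continuity of the inverse $S\mapsto T_S$ on the subspace topology of $(\calS')^L$ follows from the identity $\|T_S\|_B=\|S\|_{B^L}$ for any $B\subseteq\calS$ bounded, where $B^L:=\{f^L:f\in B\}$; Lemma~\hyperref[lem:138:2]{\ref*{lem:138}\ref*{lem:138:2}} guarantees $B^L$ is bounded in $\calS^L$, so $\|\cdot\|_{B^L}$ is a continuous seminorm on $(\calS^L)'$.

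No step presents a serious obstacle: all the harmonic-analytic content sits in the projection $f\mapsto f^L$ and its transpose. The mildly delicate point is the continuity bookkeeping for the inverse of the bottom map, which hinges on knowing that $f\mapsto f^L$ maps bounded sets of $\calS$ to bounded sets of $\calS^L$—precisely what continuity of this operator between LF/Fr\'echet spaces (Lemma~\hyperref[lem:138:2]{\ref*{lem:138}\ref*{lem:138:2}}) supplies.
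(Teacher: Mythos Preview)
Your proof is correct and follows essentially the same architecture as the paper's: both hinge on the projection $f\mapsto f^L$ of Lemma~\ref{lem:138} and its transpose from Lemma~\ref{lem:144}, establish commutativity via \eqref{eq:140}, and use the identity $\|T\|_B=\|T|_{\calS^L}\|_{B^L}$ for bicontinuity of the bottom arrow.

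The one noteworthy difference is how surjectivity of the top (and hence bottom) arrow is obtained. The paper invokes the Hahn--Banach theorem to extend any $S\in(\calS^L)'$ to some $T\in\calS'$, and then reaches $(\calS')^L$ by applying the left vertical projection $T\mapsto T^L$, reading off bottom surjectivity from commutativity. You instead write down the explicit section $S\mapsto T_S:=S\circ(\spacedcdot)^L$ and verify directly that $T_S$ is already $L$-invariant. Your route is slightly more elementary (no appeal to Hahn--Banach) and has the virtue that the extension you produce is exactly the inverse of the bottom map, so surjectivity and the formula for the inverse come in a single stroke. The paper's route, on the other hand, makes the logical structure of the diagram do the work: once the top and left arrows are surjective and the square commutes, bottom surjectivity is automatic.
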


\begin{proof}
	The diagram \eqref{eq:045} commutes, i.e., we have
	\[ T^L\big|_{\calS^L}=T\big|_{\calS^L},\qquad T\in\calS',  \]
	since
	\[ \scal{T^L}f\overset{\eqref{eq:140}}=\scal T{f^L}\overset{\text{Lemma \hyperref[lem:138:2]{\ref*{lem:138}\ref*{lem:138:2}}}}=\scal Tf,\qquad T\in\calS',\ f\in\calS^L. \]

	\vskip .2in

	The left-hand side operator in the diagram is continuous, $ G_\infty $-equivariant and surjective by Lemma \ref{lem:144}.
	The operator on the top is obviously $ G_\infty $-equivariant; it is continuous since
	\[ \norm{T\big|_{\calS^L}}_B=\norm T_B,\qquad T\in\calS', \]
	for every bounded set $ B $ in $ \calS^L $; it is surjective since by a corollary \cite[Corollary 7.3.3]{narici} of the Hahn-Banach theorem every $ T\in(\calS^L)' $ extends to an element of $ \calS' $. 
	
	\vskip .2in 
	
	Since the operator on the bottom of the diagram is a restriction of the one on the top, it is continuous and $ G_\infty $-equivariant; it is also surjective by the commutativity of the diagram; it is injective, i.e., its kernel is trivial, since for every $ T\in\left(\calS'\right)^L $ such that $ T\big|_{\calS^L}=0 $, we have
	\[ \scal Tf\overset{\text{Lemma \ref{lem:144}}}=\scal{T^L}f\overset{\eqref{eq:140}}=\scal{T\big|_{\calS^L}}{f^L}=0,\qquad f\in\calS,  \]
	hence $ T=0 $. Finally, the inverse of the bottom map is continuous since for every bounded set $ B $ in $ \calS $ and all $ T\in\left(\calS'\right)^L $, we have 
	\begin{equation}\label{eq:124}
	\norm T_B=\sup_{f\in B}\abs{\scal Tf}\overset{\text{Lemma \ref{lem:144}}}=\sup_{f\in B}\abs{\scal{T^L}f}\overset{\eqref{eq:140}}=\sup_{f\in B}\abs{\scal T{f^L}}=\norm{T\big|_{\calS^L}}_{B^L},
	\end{equation}
	where $ B^L $ is the image of the bounded set $ B $ in $ \calS $ under the continuous linear operator $ \spacedcdot^L:\calS\to\calS^L $, hence a bounded set in $ \calS^L $.
\end{proof}

\vskip .2in

Let us define the spaces (see \eqref{eq:054}--\eqref{eq:056})
\begin{align}
\calA_{umg}(G(k)\backslash G(\bbA))&:=\bigcup_L\calA_{umg}(G(k)\backslash G(\bbA))^L,\label{eq:119}\\
\calA^\infty(G(k)\backslash G(\bbA))&:=\bigcup_L\calA^\infty(G(k)\backslash G(\bbA))^L,\nonumber\\
\calA(G(k)\backslash G(\bbA))&:=\bigcup_L\calA(G(k)\backslash G(\bbA))^L,\nonumber
\end{align}
where $ L $ goes over all open compact subgroups of $ G(\bbA_f) $. 

\vskip .2in

\begin{Lem}\label{lem:060}
	\begin{enumerate}[label=\textup{(\arabic*)},leftmargin=*,align=left]
		\item\label{lem:060:1} The space $ \calA_{umg}(G(k)\backslash G(\bbA)) $ is a left $ G(\bbA) $-module under the right translations, and the rule $ \varphi\mapsto T_\varphi $, where
		\begin{equation}\label{eq:122}
		\scal{T_\varphi}f:=\int_{G(k)\backslash G(\bbA)}\varphi(x)\,f(x)\,dx,\qquad f\in\calS,
		\end{equation}
		defines an embedding of $ G(\bbA) $-modules $ \calA_{umg}(G(k)\backslash G(\bbA))\hookrightarrow\left(\calS'\right)^\infty $. 
		\item\label{lem:060:1.5} Let $ L $ be an open compact subgroup of $ G(\bbA_f) $. If $ \varphi\in\calA_{umg}(G(k)\backslash G(\bbA))^L $, then $ T_\varphi\in\left(\calS'\right)^L $.
		\item\label{lem:060:2} The space
		$ \calA^\infty(G(k)\backslash G(\bbA)) $
		is a $ G(\bbA) $-submodule of $ \calA_{umg}(G(k)\backslash G(\bbA)) $.
		\item\label{lem:060:3} The space 
		$ \calA(G(k)\backslash G(\bbA)) $
		is a $ (\frakg_\infty,K_\infty)\times G(\bbA_f) $-module.
	\end{enumerate}
\end{Lem}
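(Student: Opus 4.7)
The plan is to prove claim (1) in two stages---first establish well-definedness and continuity of the functional $T_\varphi$, then invoke (2) and \eqref{eq:151} to place $T_\varphi$ in $\left(\calS'\right)^\infty$---and then derive (3) and (4) from the preservation of $Z(\frakg_\infty)$- and $K_\infty$-finiteness under translation. The workhorse is the integration formula Lemma \hyperref[lem:023:1]{\ref*{lem:023}\ref*{lem:023:1}} combined with Lemma \ref{lem:099}.

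For well-definedness and continuity of $T_\varphi$, fix $\varphi\in\calA_{umg}(G(k)\backslash G(\bbA))^L$ and a test function $f\in\calS^{L'}$, and set $L'':=L\cap L'$, so both $\varphi$ and $f$ are $L''$-invariant. Lemma \hyperref[lem:023:1]{\ref*{lem:023}\ref*{lem:023:1}} unfolds \eqref{eq:122} into a finite sum of integrals of $\varphi_c f_c$ over $\Gamma_{c,L''}\backslash G_\infty$, indexed by a finite set $C''\subseteq G(\bbA_f)$ with $G(\bbA_f)=\bigsqcup_{c\in C''}G(k)cL''$. On each summand: the uniform moderate growth of $\varphi_c$ yields $\abs{\varphi_c(x)}\leq M_c\norm{x}^{r_c}$; Lemma \ref{lem:099} supplies $N\in\bbZ_{>0}$ such that $\norm{\spacedcdot}_{\Gamma_{c,L''}\backslash G_\infty}^{-N}$ is integrable on $\Gamma_{c,L''}\backslash G_\infty$; and the Schwartz seminorm $\norm f_{1,-(r_c+N),c,L''}$ bounds $\abs{f_c(x)}\norm{x}_{\Gamma_{c,L''}\backslash G_\infty}^{r_c+N}$. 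Combining these gives an estimate of the form $\abs{\scal{T_\varphi}{f}}\leq\mathrm{const}\cdot\sum_{c\in C''}\norm f_{1,-(r_c+N),c,L''}$, so $T_\varphi\big|_{\calS^{L'}}$ is continuous. By Remark \hyperref[rem:054:1]{\ref*{rem:054}\ref*{rem:054:1}}, $T_\varphi\in\calS'$.

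Claim (2) is a direct computation: for $l\in L$, a change of variable in \eqref{eq:122} together with the right $L$-invariance of $\varphi$ gives $r'(1_{G_\infty},l)T_\varphi=T_\varphi$, i.e., $T_\varphi\in\left(\calS'\right)^L$. Since $\left(r'\big|_{G_\infty},\calS'\right)$ is smooth by Proposition \hyperref[prop:044:2]{\ref*{prop:044}\ref*{prop:044:2}}, \eqref{eq:151} then yields $T_\varphi\in\left(\calS'\right)^\infty$, which is the most substantive part of (1). For the $G(\bbA)$-invariance of $\calA_{umg}(G(k)\backslash G(\bbA))$, one computes $(r(g)\varphi)_c(y)=\varphi_{cg_f}(yg_\infty)$; using the commutation relation $u\circ r(g_\infty)=r(g_\infty)\circ\Ad(g_\infty^{-1})u$ for $u\in U(\frakg_\infty)$, the fact that $\Ad(g_\infty^{-1})$ is an algebra automorphism of $U(\frakg_\infty)$, and the submultiplicativity \ref{enum:001:2}, one checks that right translation by $g_\infty$ preserves uniform moderate growth; a further change of variable in \eqref{eq:122} yields $T_{r(g)\varphi}=r'(g)T_\varphi$. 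Injectivity of $\varphi\mapsto T_\varphi$ follows from the inclusion $C_c^\infty(G(k)\backslash G(\bbA))^{L}\subseteq\calS^{L}$ and the continuity of $\varphi$.

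For (3) and (4), the same commutation relation combined with the fact that $\Ad(g_\infty^{-1})$ preserves $Z(\frakg_\infty)\subseteq U(\frakg_\infty)$ (it is an automorphism that preserves the center) shows that right translation by $G(\bbA)$ preserves $Z(\frakg_\infty)$-finiteness, giving (3); meanwhile $K_\infty$-finiteness on the right is preserved by $K_\infty$, by differentiation in $\frakg_\infty$ (a standard Harish-Chandra argument), and by $G(\bbA_f)$ (which commutes with the $G_\infty$-action), yielding the $(\frakg_\infty,K_\infty)\times G(\bbA_f)$-module structure in (4). The chief technical obstacle is the continuity estimate in the first stage of (1)---balancing the positive weight $r_c$ from $\varphi_c$ against a sufficiently negative weight from a Schwartz seminorm of $f$---while the remaining steps reduce to careful but routine manipulations.
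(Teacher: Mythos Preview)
Your argument is correct, but it re-derives from scratch things that the paper has already packaged into earlier lemmas, so it is longer than necessary. For (1)--(2), the paper does not redo any continuity estimate: it simply observes that if $\varphi\in\calA_{umg}(G(k)\backslash G(\bbA))^L$, then $\varphi\in\calA_{umg}(G(k)\backslash G(\bbA))^{L_0}$ for every open compact $L_0\subseteq L$, and Lemma~\ref{lem:023}\ref{lem:023:1} (applied at level $L_0$) already says $T_\varphi\big|_{\calS^{L_0}}\in(\calS^{L_0})'$. By Remark~\ref{rem:054}\ref{rem:054:1} together with Lemma~\ref{lem:015}, checking continuity on $\calS^{L_0}$ for all $L_0\subseteq L$ suffices, so $T_\varphi\in\calS'$ drops out immediately. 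You instead use only the integration formula from Lemma~\ref{lem:023}\ref{lem:023:1} and rebuild the estimate by hand; this works, but note two small gaps in your write-up: (i) the bound $\abs{\varphi_c(x)}\leq M_c\norm{x}^{r_c}$ must be upgraded to $\abs{\varphi_c(x)}\leq M_c\norm{x}_{\Gamma_{c,L''}\backslash G_\infty}^{r_c}$ using the $\Gamma_{c,L''}$-invariance of $\varphi_c$ before it combines correctly with the Schwartz seminorm; and (ii) Lemma~\ref{lem:099} only bounds $\norm{x}_{\Gamma\backslash G_\infty}^{-N}$ uniformly---integrability over $\Gamma_{c,L''}\backslash G_\infty$ then needs the finiteness of $\vol(\Gamma_{c,L''}\backslash G_\infty)$.

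For (3)--(4), the paper again appeals to an earlier result: Lemma~\ref{lem:023}\ref{lem:023:2}\ref{lem:023:2:2}--\ref{lem:023:2:3} already transports the $G_\infty$-module and $(\frakg_\infty,K_\infty)$-module structures from the classical spaces $\calA^\infty(\Gamma_{c,L}\backslash G_\infty)$ and $\calA(\Gamma_{c,L}\backslash G_\infty)$ via the isomorphism $\Psi^{-1}$, and $G(\bbA_f)$-invariance is then observed to be obvious. Your direct approach via the commutation relation $u\circ r(g_\infty)=r(g_\infty)\circ\Ad(g_\infty^{-1})u$ is a perfectly valid alternative and has the advantage of being self-contained, but the paper's route is shorter because the work was already done in Section~\ref{sec:104}.
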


\begin{proof}
	\ref{lem:060:1} \& \ref{lem:060:1.5} The only non-obvious part of \ref{lem:060:1} is that for every $ \varphi\in\calA_{umg}(G(k)\backslash G(\bbA)) $, $ T_\varphi $ is a well-defined element of $ \left(\calS'\right)^\infty $. Let $ L $ be an open compact subgroup of $ G(\bbA_f) $ such that $ \varphi\in\calA_{umg}(G(k)\backslash G(\bbA))^{L} $. By Remark \hyperref[rem:054:1]{\ref*{rem:054}\ref*{rem:054:1}} and Lemma \ref{lem:015}, to prove that $ T_\varphi\in\calS' $, it suffices to prove that $ T_\varphi\big|_{\calS^{L_0}}\in\left(\calS^{L_0}\right)' $ for every open compact subgroup $ L_0\subseteq L $ of $ G(\bbA_f) $, and this holds by Lemma \hyperref[lem:023:1]{\ref*{lem:023}\ref*{lem:023:1}} since for such $ L_0 $ we have that $ \varphi\in\calA_{umg}(G(k)\backslash G(\bbA))^{L_0} $. Now it is straightforward to prove that $ T_\varphi $ is $ L $-invariant, hence 
	\[ T_\varphi\in\left(\calS'\right)^{L}\overset{\eqref{eq:151}}\subseteq\left(\calS'\right)^\infty. \]
	
	\vskip .2in
	
	\ref{lem:060:2} \& \ref{lem:060:3} By Lemma \hyperref[lem:023:2]{\ref*{lem:023}\ref*{lem:023:2}\ref*{lem:023:2:2}--\ref*{lem:023:2:3}}, $ \calA^\infty(G(k)\backslash G(\bbA)) $ is a $ G_\infty $-module, and $ \calA(G(k)\backslash G(\bbA)) $ is a $ (\frakg_\infty,K_\infty) $-module. Both spaces are obviously invariant under the right translations by elements of $ G(\bbA_f) $. The claims follow.
\end{proof}

\vskip .2in

The following theorem is the main result of this section and an adelic analogue of \cite[Theorem 1.16]{casselman}.

\vskip .2in

\begin{Thm}\label{thm:145}
	\begin{enumerate}[label=\textup{(\arabic*)},leftmargin=*,align=left]
		\item\label{thm:145:1} For every open compact subgroup $ L $ of $ G(\bbA_f) $, we have
		\begin{equation}\label{eq:156}
		\left(\left(\calS'\right)^L\right)_{G_\infty\textup{-G\aa rding}}=\calA_{umg}(G(k)\backslash G(\bbA))^L.
		\end{equation}
		\item\label{thm:145:2} We have
		\[ \left(\calS'\right)_{G(\bbA)\textup{-G\aa rding}}=\calA_{umg}(G(k)\backslash G(\bbA)). \]
	\end{enumerate}
\end{Thm}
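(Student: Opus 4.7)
The plan is to bootstrap both parts from the already established Corollary \ref{cor:073}, using the $G_\infty$-equivalence of Proposition \ref{lem:054} together with the general identity \eqref{eq:142} applied to $V=\calS'$.

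For part \ref{thm:145:1}, I would begin with the elementary but essential observation that the G\aa rding subspace construction is functorial under $G_\infty$-equivalences of continuous representations on complete locally convex spaces: if $\Xi:V\xrightarrow{\sim}W$ intertwines $\pi$ and $\rho$, then $\Xi(\pi(\psi)v)=\rho(\psi)\Xi(v)$ for all $\psi\in C_c^\infty(G_\infty)$ and $v\in V$ (immediate from the characterizing property of the Gelfand-Pettis integral), so $\Xi$ carries $V_{G_\infty\textup{-G\aa rding}}$ onto $W_{G_\infty\textup{-G\aa rding}}$. I would apply this to the bottom equivalence
\[ \Xi:\left(r'\big|_{G_\infty},\left(\calS'\right)^L\right)\xrightarrow{\sim}\left(r_L',\left(\calS^L\right)'\right),\qquad T\mapsto T\big|_{\calS^L}, \]
of Proposition \ref{lem:054}, and invoke Corollary \ref{cor:073} to obtain
\[ \Xi\left(\bigl(\left(\calS'\right)^L\bigr)_{G_\infty\textup{-G\aa rding}}\right)=\calA_{umg}(G(k)\backslash G(\bbA))^L, \]
with the right-hand side viewed as a subspace of $\left(\calS^L\right)'$ via \eqref{eq:027}.

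To conclude \ref{thm:145:1}, I would then verify that $\Xi^{-1}$ pulls this copy of $\calA_{umg}(G(k)\backslash G(\bbA))^L$ back to the copy sitting inside $\left(\calS'\right)^L$ defined by \eqref{eq:122} (whose image indeed lies in $\left(\calS'\right)^L$ by Lemma \hyperref[lem:060:1.5]{\ref*{lem:060}\ref*{lem:060:1.5}}). This is a direct unwinding of definitions: for $\varphi\in\calA_{umg}(G(k)\backslash G(\bbA))^L$ and $f\in\calS^L$, the pairings \eqref{eq:027} and \eqref{eq:122} are both equal to $\int_{G(k)\backslash G(\bbA)}\varphi(x)f(x)\,dx$, so $T_\varphi\in\left(\calS'\right)^L$ restricts under $\Xi$ to the corresponding element of $\left(\calS^L\right)'$.

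For part \ref{thm:145:2}, I would simply apply \eqref{eq:142} to the representation $\left(r',\calS'\right)$ of $G(\bbA)$ (justified by Proposition \ref{prop:044}), substitute part \ref{thm:145:1}, and invoke definition \eqref{eq:119}:
\[ \left(\calS'\right)_{G(\bbA)\textup{-G\aa rding}}=\bigcup_L\bigl(\left(\calS'\right)^L\bigr)_{G_\infty\textup{-G\aa rding}}=\bigcup_L\calA_{umg}(G(k)\backslash G(\bbA))^L=\calA_{umg}(G(k)\backslash G(\bbA)). \]
I do not anticipate any serious obstacle: all heavy lifting has been done in Corollary \ref{cor:073}, Proposition \ref{lem:054}, and the lemma producing \eqref{eq:142}. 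The only point requiring attention is the compatibility of the two canonical embeddings of $\calA_{umg}(G(k)\backslash G(\bbA))^L$—one into $\left(\calS'\right)^L$, the other into $\left(\calS^L\right)'$—with the equivalence $\Xi$, and this is a tautology once the defining integrals are written out.
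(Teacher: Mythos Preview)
Your proposal is correct and follows essentially the same route as the paper: transport Corollary \ref{cor:073} through the $G_\infty$-equivalence of Proposition \ref{lem:054}, checking that the two embeddings of $\calA_{umg}(G(k)\backslash G(\bbA))^L$ match under restriction, and then take the union over $L$ via \eqref{eq:142}. Your write-up is in fact slightly more explicit than the paper's about the functoriality of the G\aa rding construction and the compatibility of the embeddings, but these are exactly the points the paper's ``clearly'' is covering.
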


\begin{proof}
	\ref{thm:145:1} The image of the space $ \calA_{umg}(G(k)\backslash G(\bbA))^L\subseteq\left(\calS'\right)^L $ under the $ G_\infty $-equivalence $ \left(\calS'\right)^L\to\left(\calS^L\right)' $ of Proposition \ref{lem:054} is clearly the space $ \calA_{umg}(G(k)\backslash G(\bbA))^L\subseteq\left(\calS^L\right)' $, which equals $ \left(\left(\calS^L\right)'\right)_{G_\infty\textup{-G\aa rding}} $ by Corollary \ref{cor:073}. The claim follows.
	
	\vskip .2in
	
	\ref{thm:145:2} This follows from \ref{thm:145:1} by \eqref{eq:142}.
\end{proof}

\section{Closed irreducible admissible subrepresentations of $ \calS(G(k)\backslash G(\bbA))' $}\label{sec:131}

In this section, let $ G $ be a (Zariski) connected semisimple group defined over $ k $. Under this assumption, in Theorem \ref{thm:126} we prove that the closed irreducible admissible subrepresentations of $ \left(r',\calS'\right) $ are the closures in $ \calS' $ of irreducible (admissible) $ (\frakg_\infty,K_\infty)\times G(\bbA_f) $-submodules of $ \calA(G(k)\backslash G(\bbA)) $. We start with a few definitions and basic observations.

\vskip .2in

For a continuous representation $ \pi $ of $ G_\infty $ on a complete locally convex topological vector space $ V $, let us denote by $ V_{K_\infty} $ the $ (\frakg_\infty,K_\infty) $-module of $ G_\infty $-smooth, $ K_\infty $-finite vectors in $ V $. We recall that if $ \pi $ is admissible, then all $ K_\infty $-finite vectors in $ V $ are $ G_\infty $-smooth.

\vskip .2in

\begin{Lem}\label{lem:129}
	Let $ \pi $ be a continuous representation of $ G(\bbA) $ on a complete locally convex topological vector space $ V $.
	Then, the $ (\frakg_\infty,K_\infty)\times G(\bbA_f) $-module
	\[ V^\infty_{K_\infty}:=V_{K_\infty}\cap V^\infty \]
	is dense in $ V $. 
\end{Lem}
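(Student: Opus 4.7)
The plan is to fix $v \in V$ and a continuous seminorm $\rho$ on $V$, and to produce $w \in V^\infty_{K_\infty}$ with $\rho(v - w)$ arbitrarily small, by a two-step approximation: first land in $V^\infty$ (necessarily in $V^L$ for some open compact $L \subseteq G(\bbA_f)$), then, without leaving $V^\infty \cap V^L$, pass to a $K_\infty$-finite vector.

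For the first step, I would use the density of the G\aa rding subspace $V_{G(\bbA)\text{-G\aa rding}}$ in $V$ (established in the discussion after \eqref{eq:087}) to choose $\psi \in C_c^\infty(G_\infty)$, $\varphi \in C_c^\infty(G(\bbA_f))$ and $v' \in V$ with $\pi(\psi \otimes \varphi) v'$ close to $v$ in $\rho$. By \eqref{eq:114} we may assume $\varphi$ is right-invariant under some open compact subgroup $L$ of $G(\bbA_f)$, so that $w_0 := \pi(\psi \otimes \varphi)v' = \pi(\psi)\pi(\varphi)v'$ lies in $V^L$ and is $G_\infty$-smooth (because $\psi \in C_c^\infty(G_\infty)$); hence $w_0 \in V^\infty$.

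For the second step, I would appeal to the Peter--Weyl theorem: the subalgebra $\bbC[K_\infty] \subseteq C(K_\infty)$ spanned by matrix coefficients of finite-dimensional representations of $K_\infty$ is uniformly dense in $C(K_\infty)$ and consists of bi-$K_\infty$-finite functions. Fix an approximate identity $(\chi_n) \subseteq C(K_\infty)$ (non-negative, mass one, support shrinking to $1_{K_\infty}$), and choose $\eta_n \in \bbC[K_\infty]$ with $\|\eta_n - \chi_n\|_\infty \to 0$. Since the orbit $\{\pi(k, 1_{G(\bbA_f)})w_0 : k \in K_\infty\}$ is the continuous image of the compact group $K_\infty$, it is bounded in every continuous seminorm, so a routine estimate (applied separately to $\pi(\chi_n)w_0 - w_0$ and to $\pi(\eta_n - \chi_n)w_0$) yields $\pi(\eta_n)w_0 \to w_0$ in $V$. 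Set
\[
w_n := \pi(\eta_n) w_0 = \pi\bigl((\eta_n * \psi) \otimes \varphi\bigr)v',
\]
where $\eta_n * \psi \in C_c^\infty(G_\infty)$ denotes the usual convolution on $G_\infty$ (with $\eta_n$ extended by zero off $K_\infty$). From $L_{k}(\eta_n * \psi) = (L_{k}\eta_n) * \psi$ together with the left-$K_\infty$-finiteness of $\eta_n$, the function $\eta_n * \psi$ is itself left-$K_\infty$-finite, which forces $w_n$ to be right-$K_\infty$-finite under $\pi$. Combined with $G_\infty$-smoothness (via $\eta_n * \psi \in C_c^\infty(G_\infty)$) and $L$-invariance inherited from $\varphi$, this places $w_n$ in $V^\infty_{K_\infty}$, and taking $n$ large furnishes the desired approximation.

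The main obstacle is the Peter--Weyl step: producing bi-$K_\infty$-finite kernels $\eta_n$ for which $\pi(\eta_n) w_0 \to w_0$ in the (possibly very general) locally convex topology of $V$. This is classical but requires Peter--Weyl together with the equicontinuity of $k \mapsto \pi(k, 1_{G(\bbA_f)}) w_0$ on the compact set $K_\infty$, verified seminorm by seminorm via Gelfand--Pettis estimates.
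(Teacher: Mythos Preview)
Your argument is correct and follows the same two-step skeleton as the paper: first land in $V^\infty$ (equivalently in some $V^L$), then approximate within $V^L$ by $K_\infty$-finite smooth vectors. The paper's proof is much shorter only because it outsources the second step entirely to Harish-Chandra's result \cite[Lemma~4]{hc}, which asserts directly that $(V^L)_{K_\infty}$ is dense in $V^L$ for any continuous $G_\infty$-representation on a complete locally convex space; it then quotes Corollary~\ref{cor:141}\ref{cor:141:2} for the density of $V^\infty=\bigcup_L V^L$ in $V$. What you have done is essentially reprove that Harish-Chandra lemma in the concrete form you need, via Peter--Weyl and a hand-built $K_\infty$-finite approximate identity $\eta_n$, while simultaneously keeping track of the $C_c^\infty(G_\infty)$-smoothing and the $L$-invariance through the factorization $w_n=\pi((\eta_n*\psi)\otimes\varphi)v'$. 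This buys you a self-contained argument with no black-box citation, at the cost of length; the paper's version buys brevity by invoking the classical result. Either way the content is the same.
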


\begin{proof}
	For every open compact subgroup $ L $ of $ G(\bbA_f) $, $ \left(V^L\right)_{K_\infty} $ is dense in $ V^L $ by \cite[Lemma 4]{hc}. It follows that the space $ V^\infty_{K_\infty}=\bigcup_L\left(V^L\right)_{K_\infty} $ is dense in $ V^\infty=\bigcup_LV^L $. In turn, the space $ V^\infty $ is dense in $ V $ by Corollary \hyperref[cor:141:2]{\ref*{cor:141}\ref*{cor:141:2}}. The claim follows.
\end{proof}

\vskip .2in

\begin{Def}\label{def:152}
	We say that a $ (\frakg_\infty,K_\infty)\times G(\bbA_f) $-module $ W $ is admissible if it has the following two properties:
	\begin{enumerate}[label=\textup{(\arabic*)},leftmargin=*,align=left]
		\item $ W $ is a smooth $ G(\bbA_f) $-module, i.e.,
		\[ W=\bigcup_LW^L, \]
		where $ L $ goes over all open compact subgroups of $ G(\bbA_f) $ and
		\[ W^L:=\left\{w\in W:l.w=w\text{ for all }l\in L\right\}. \]
		\item For every open compact subgroup $ L $ of $ G(\bbA_f) $, $ W^L $ is an admissible $ (\frakg_\infty,K_\infty) $-module, i.e.,
		\[ \dim_\bbC W^L(\delta)<\infty,\qquad\delta\in\widehat K_\infty. \]
		Here $ \widehat K_\infty $ is the set of equivalence classes of irreducible continuous representations of $ K_\infty $, and $ W^L(\delta) $ is the $ \delta $-isotypic component of $ W^L $.
	\end{enumerate}
\end{Def}

\vskip .2in

\begin{Rem}
	In \cite[\S3]{flath}, admissible $ (\frakg_\infty,K_\infty)\times G(\bbA_f) $-modules are called admissible $ G(\bbA) $-modules.
\end{Rem}

\vskip .2in

\begin{Def}\label{def:129}
	Let $ \pi $ be a continuous representation of $ G(\bbA) $ on a complete locally convex topological vector space $ V $. We say that $ \pi $ is admissible if it satisfies one of the following equivalent conditions:
	\begin{enumerate}[label=\textup{(\arabic*)},leftmargin=*,align=left]
		\item\label{def:129:1} The $ (\frakg_\infty,K_\infty)\times G(\bbA_f) $-module $ V_{K_\infty}^\infty $ is admissible.
		\item\label{def:129:2} For every open compact subgroup $ L $ of $ G(\bbA_f) $, $ V^L $ is an admissible representation of $ G_\infty $.
		\item\label{def:129:3} For every open compact subgroup $ L $ of $ G(\bbA_f) $,
		\[ \dim_\bbC V^L(\delta)<\infty,\qquad\delta\in\widehat K_\infty, \]
		where $ V^L(\delta) $ is the $ \delta $-isotypic component of $ V^L $.
	\end{enumerate}
\end{Def}

\vskip .2in

Let $ \pi $ be a continuous representation of $ G(\bbA) $ on a complete locally convex topological vector space $ V $.
Let $ L $ be an open compact subgroup of $ G(\bbA_f) $. By Lemma \hyperref[lem:133:4]{\ref*{lem:133}\ref*{lem:133:4}}, the linear operator $ \spacedcdot^L:V\to V $,
\begin{equation}\label{eq:152}
v^L:=\frac1{\vol L}\int_L\pi(1_{G_\infty},l)v\,dl,
\end{equation}
is a continuous projection onto $ V^L $. Next, by the classical theory (e.g., see \cite[\S2]{hc}), for every $ \delta\in\widehat K_\infty $ the operator $ E_\delta:=\pi\big|_{K_\infty}\left(d(\delta)\overline\xi_\delta\right):V\to V $,
\[ E_\delta v:=\int_{K_\infty}d(\delta)\,\overline{\xi_\delta(k)}\,\pi(k,1_{G(\bbA_f)})v\,dk, \]
is a continuous projection onto $ V(\delta) $ and restricts to a continuous projection of $ V^L $ onto $ V^L(\delta) $. Here $ d(\delta) $ and $ \xi_\delta $ are the degree and character of $ \delta $, respectively, and $ dk $ is the normalized Haar measure on $ K_\infty $. Thus, the operator $ E_\delta^L:V\to V $,
\[ E_\delta^Lv:=E_\delta v^L=\frac{d(\delta)}{\vol L}\int_{K_\infty}\int_L\overline{\xi_\delta(k)}\,\pi(k,l)v\,dl\,dk, \]
is a continuous projection onto $ V^L(\delta) $. Moreover, we have the following lemma.

\vskip .2in

\begin{Lem}
	For every $ (\frakg_\infty,K_\infty)\times G(\bbA_f) $-submodule $ W $ of $ V_{K_\infty}^\infty $, $ E_\delta^L $ restricts to a projection of $ W $ onto $ W^L(\delta) $.
\end{Lem}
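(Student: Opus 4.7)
The plan is to factor $E_\delta^L$ as the composition $E_\delta \circ (\spacedcdot^L)$, where both $\spacedcdot^L$ and $E_\delta$ are continuous projections on $V$ (the former onto $V^L$, the latter onto $V(\delta)$), and then show separately that each factor preserves $W$. The key point is that although these operators are defined by Gelfand-Pettis integrals, for vectors $w \in W \subseteq V_{K_\infty}^\infty$ the integrals actually reduce to finite sums of group translates of $w$, and $W$ is stable under those translates by hypothesis.

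First I would handle the $\spacedcdot^L$ factor. Fix $w \in W$. Since $W \subseteq V^\infty$, there exists an open compact subgroup $L_0$ of $G(\bbA_f)$ with $w \in V^{L_0}$. Setting $L_1 := L \cap L_0$, the compact group $L$ is a finite disjoint union $L = \bigsqcup_i l_i L_1$ of $L_1$-cosets, and since $w$ is $L_1$-invariant,
\[
w^L \;=\; \frac{1}{\vol L}\int_L \pi(1_{G_\infty},l)w\,dl \;=\; \frac{\vol L_1}{\vol L}\sum_i \pi(1_{G_\infty},l_i)w.
\]
Because $W$ is a $G(\bbA_f)$-submodule, each summand lies in $W$, so $w^L \in W$.

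Next I would handle the $E_\delta$ factor. Since $w$ is $K_\infty$-finite, the space $V_0 := \repspan_\bbC\{\pi(k,1_{G(\bbA_f)})w : k \in K_\infty\}$ is a finite-dimensional continuous $K_\infty$-subrepresentation of $V$, and it is contained in $W$ because $W$ is $K_\infty$-stable. On this finite-dimensional space the Gelfand-Pettis integral defining $E_\delta$ collapses to the standard projection onto the $\delta$-isotypic component of $V_0$, which lies in $V_0 \subseteq W$. Since $w^L$ is again $K_\infty$-finite (the actions of $K_\infty$ and $G(\bbA_f)$ commute, so $w^L$ is a linear combination of $K_\infty$-finite vectors in $W$), the same reasoning gives $E_\delta(w^L) \in W$. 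Combining the two steps and invoking the already established fact that $E_\delta^L(V) \subseteq V^L(\delta)$ yields
\[
E_\delta^L(W) \;\subseteq\; W \cap V^L(\delta) \;=\; W^L(\delta).
\]

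Finally, to see that this restriction is a projection onto $W^L(\delta)$, I would check that $E_\delta^L$ acts as the identity on $W^L(\delta)$: for $w \in W^L(\delta)$ we have $w^L = w$ (as $w \in V^L$) and $E_\delta w = w$ (as $w \in V(\delta)$), so $E_\delta^L w = E_\delta(w^L) = w$. The main subtlety in the argument is the first step: one must justify that the Gelfand-Pettis integrals do not take us out of the algebraic submodule $W$ into merely its closure, and this is exactly where the smoothness hypothesis $W \subseteq V_{K_\infty}^\infty$ is essential, via the finite-coset and finite-dimensional reductions above.
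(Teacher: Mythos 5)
Your proof is correct and takes essentially the same route as the paper's: factor $ E_\delta^L=E_\delta\circ(\spacedcdot^L) $, use smoothness of $ W $ under $ G(\bbA_f) $ to write $ w^L $ as a finite sum of translates $ \pi(1_{G_\infty},b)w\in W $ over cosets of $ L\cap L_0 $ in $ L $, and then use $ K_\infty $-finiteness to conclude $ E_\delta(w^L)\in W^L(\delta) $. The only difference is cosmetic: where you spell out the finite-dimensional reduction for the $ E_\delta $ step, the paper simply cites the classical theory of $ (\frakg_\infty,K_\infty) $-modules for the equality $ E_\delta W^L=W^L(\delta) $.
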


\begin{proof}
	By the above discussion, we only need to prove that $ W $ is invariant under the operator $ E_\delta^L $. If $ v\in W $, then $ v\in V^{L'} $ for some $ L'\subseteq L $, and we have
	\[ v^L\overset{\eqref{eq:152}}=\frac1{\vol L}\sum_{b\in L/L'}\int_{bL'}\pi(1_{G_\infty},l)v\,dl=\frac1{\abs{L:L'}}\sum_{b\in L/L'}\pi(1_{G_\infty},b)v, \]
	which implies that $ v^L\in W $ since $ W $ is $ G(\bbA_f) $-invariant. Thus, $ v^L\in W\cap V^L=W^L $, hence $ E_\delta^Lv=E_\delta v^L\in E_\delta W^L=W^L(\delta) $, where the last equality holds by the classical theory of $ (\frakg_\infty,K_\infty) $-modules. The claim follows.
\end{proof}

\vskip .2in

Next, we recall two classical lemmas to be used in the proof of Theorem \ref{thm:126}.

\vskip .2in

\begin{Lem}\label{lem:128}
	Let $ \left(G_\infty\right)_\circ $ be the identity component of $ G_\infty $. Let $ \pi $ be a continuous representation of $ G_\infty $ on a complete locally convex topological vector space $ V $. If $ v\in V $ is $ G_\infty $-smooth, $ Z(\frakg_\infty) $-finite and $ K_\infty $-finite, then $ \Cl_V\pi(U(\frakg_\infty))v $ is $ \left(G_\infty\right)_\circ $-invariant.
\end{Lem}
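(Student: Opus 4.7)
The plan is to set $W := \pi(U(\frakg_\infty))v$ and show that
\[ H := \{x \in G_\infty : \pi(x)(\Cl_V W) \subseteq \Cl_V W\} \]
is topologically closed in $G_\infty$, is a subsemigroup, and contains an open neighborhood of $1_{G_\infty}$. These three properties force $H$ to be clopen in $G_\infty$: given $h \in H$ and an open neighborhood $U \subseteq H$ of $1_{G_\infty}$, the semigroup property yields $hU \subseteq H$, so $H$ is open at each of its points. Then $H \cap (G_\infty)_\circ$ is clopen in $(G_\infty)_\circ$ and contains $1_{G_\infty}$, hence equals all of $(G_\infty)_\circ$ by connectedness.

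First I would check that every $w \in W$ is smooth, $K_\infty$-finite, and $Z(\frakg_\infty)$-finite: smoothness is preserved by the $U(\frakg_\infty)$-action; $Z(\frakg_\infty)$-finiteness is preserved because $Z(\frakg_\infty)$ is central in $U(\frakg_\infty)$; and $K_\infty$-finiteness follows from the identity $\pi(k)\pi(u) = \pi(\Ad(k)u)\pi(k)$ combined with the fact that $\Ad(K_\infty)$ preserves each finite-dimensional filtered piece $U(\frakg_\infty)_{\leq d}$. By the Harish-Chandra analyticity theorem, each such $w$ is then an analytic vector for $\pi$, so for every $X \in \frakg_\infty$ one has the convergent expansion
\[ \pi(\exp X)w = \sum_{n \geq 0}\frac{1}{n!}\pi(X^n)w \]
in $V$ for $X$ in some neighborhood of $0$ in $\frakg_\infty$; since each $\pi(X^n)w$ lies in $W$, the partial sums lie in $W$ and the limit lies in $\Cl_V W$.

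Closedness of $H$ is immediate from the joint continuity of $\pi$ and the closedness of $\Cl_V W$, and the semigroup property is $\pi(xy) = \pi(x)\pi(y)$. The crux is to produce $\epsilon > 0$ such that $\pi(\exp X)w \in \Cl_V W$ holds uniformly for $w \in W$ and all $X \in \frakg_\infty$ with $\norm{X} < \epsilon$; after this, continuity of $\pi(\exp X)$ on $V$ upgrades the inclusion to $\pi(\exp X)(\Cl_V W) \subseteq \Cl_V W$. For the uniformity, I would exploit the Ad-equivariance
\[ \pi(\exp X)\pi(u)v = \pi(\Ad(\exp X)u)\,\pi(\exp X)v, \]
together with the observation that for $u \in U(\frakg_\infty)_{\leq d}$ the series $\Ad(\exp X)u = \sum_k \frac{1}{k!}(\mathrm{ad}\,X)^k u$ lives in the finite-dimensional space $U(\frakg_\infty)_{\leq d}$ and is entire in $X$. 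This reduces the uniformity statement to the single analytic expansion of $\pi(\exp X)v$, whose radius of convergence depends only on $v$.

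The main obstacle will be making this uniformity-in-$w$ step precise, since it requires careful interchange of the $\pi(\Ad(\exp X)u)$ action with the analytic expansion of $\pi(\exp X)v$ (best handled in the smooth topology on $V^\infty$, where $\pi(u')$ for $u' \in U(\frakg_\infty)$ acts continuously). The remaining steps --- closedness and semigroup property of $H$, and the clopen argument on $(G_\infty)_\circ$ --- are formal.
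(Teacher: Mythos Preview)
Your sketch is correct and is precisely the argument the paper invokes: the paper's proof consists of a single citation to the first paragraph of the proof of \cite[Theorem 1]{hc}, and what you have written is a faithful reconstruction of that argument. The core ingredients---Harish-Chandra's theorem that a smooth $K_\infty$-finite $Z(\frakg_\infty)$-finite vector is analytic, the power-series expansion $\pi(\exp X)w=\sum_n\frac{1}{n!}\pi(X^n)w$ landing in $\Cl_V W$, and the closed-semigroup/connectedness argument---are exactly the ones Harish-Chandra uses.

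You have also correctly isolated the one genuinely delicate point: the radius of the analytic expansion a priori depends on $w\in W$, and one needs a single neighborhood of $1_{G_\infty}$ that works for all of $W$ simultaneously. Your $\Ad$-equivariance reduction to the single vector $v$ is the right idea; to make it rigorous one indeed passes to the smooth Fr\'echet representation $(\pi^\infty,V^\infty)$, where $v$ remains $K_\infty$- and $Z(\frakg_\infty)$-finite, so Harish-Chandra's theorem applied there gives convergence of the exponential series in the $C^\infty$-topology, and then the continuous operators $\pi(u')$ on $V^\infty$ commute with the sum. This is exactly the refinement you flagged as ``best handled in the smooth topology on $V^\infty$''.
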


\begin{proof}
	This is proved by the argument from the first paragraph of the proof of \cite[Theorem 1]{hc}.
\end{proof}

\vskip .2in

\begin{Lem}\label{lem:130}
	Let
	\[ J:=\left\{\psi\in C_c^\infty(G_\infty):\psi\left(kxk^{-1}\right)=\psi(x)\text{ for all }x\in G_\infty\text{ and }k\in K_\infty\right\}. \]
	Let $ \pi $ be an admissible continuous representation of $ G_\infty $ on a complete locally convex topological vector space $ V $. Then, for every $ v\in V_{K_\infty} $ there exists $ \psi\in J $ such that
	\[ v=\pi(\psi)v. \]
	In particular,
	\begin{equation}\label{eq:155}
	V_{K_\infty}\subseteq V_{G_\infty\textup{-G\aa rding}}.
	\end{equation}
\end{Lem}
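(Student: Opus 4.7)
My plan is to construct, for each $ v\in V_{K_\infty} $, a single $ \psi\in J $ with $ \pi(\psi)v=v $; the inclusion \eqref{eq:155} then follows at once from the definition of the G\aa rding subspace. The strategy is to reduce to a finite-dimensional stage by admissibility, produce operators close to the identity on that stage via a symmetrized approximate identity, and then invert inside the convolution algebra by Cayley--Hamilton.

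Since $ v $ is $ K_\infty $-finite, its $ K_\infty $-span meets only finitely many isotypic components $ V(\delta_1),\ldots,V(\delta_n) $, and admissibility of $ \pi $ forces $ V':=V(\delta_1)\oplus\cdots\oplus V(\delta_n) $ to be finite-dimensional with $ v\in V' $. Because $ G_\infty $ is unimodular, inner automorphism by elements of $ K_\infty $ preserves Haar measure, and a direct substitution then shows that $ J $ is closed under convolution and that $ \pi(\psi) $ commutes with $ \pi|_{K_\infty} $ for every $ \psi\in J $. Consequently, each $ \pi(\psi) $ with $ \psi\in J $ stabilizes every isotypic component $ V(\delta) $, and in particular stabilizes $ V' $.

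Next I would fix an approximate identity $ (\psi_m)_m\subseteq C_c^\infty(G_\infty) $ as in the paragraph following \eqref{eq:087} and symmetrize it by
\[ \psi_m^{K_\infty}(x):=\int_{K_\infty}\psi_m(kxk^{-1})\,dk\in J. \]
Fubini gives
\[ \pi(\psi_m^{K_\infty})w=\int_{K_\infty}\pi(k^{-1})\,\pi(\psi_m)\,\pi(k)w\,dk,\qquad w\in V, \]
and the standard fact that $ \pi(\psi_m)u\to u $ uniformly for $ u $ in any compact subset of $ V $ (applied to the compact orbit $ \pi(K_\infty)w $) yields $ \pi(\psi_m^{K_\infty})w\to w $ in $ V $. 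Restricting to the finite-dimensional $ V' $, the endomorphisms $ T_m:=\pi(\psi_m^{K_\infty})|_{V'}\in\End(V') $ converge to $ \mathrm{id}_{V'} $, so $ T_{m_0} $ is invertible for some index $ m_0 $.

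Finally, since $ \det T_{m_0}\neq0 $, the constant term in the characteristic polynomial of $ T_{m_0} $ is nonzero, and solving the Cayley--Hamilton relation for $ \mathrm{id}_{V'} $ produces scalars $ b_1,\ldots,b_d\in\bbC $ with $ \mathrm{id}_{V'}=\sum_{i=1}^{d}b_i\,T_{m_0}^i $, expressing $ \mathrm{id}_{V'} $ as a polynomial in $ T_{m_0} $ \emph{with no constant term}. Setting $ \psi:=\sum_{i=1}^{d}b_i\,(\psi_{m_0}^{K_\infty})^{*i}\in J $, where $ (\spacedcdot)^{*i} $ denotes the $ i $-fold convolution power, one obtains $ \pi(\psi)|_{V'}=\mathrm{id}_{V'} $ and in particular $ \pi(\psi)v=v $. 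The main obstacle is precisely this last step: the algebra $ C_c^\infty(G_\infty) $ has no unit, so $ T_{m_0} $ cannot be inverted in $ \pi(J) $ by formally applying $ T_{m_0}^{-1} $; the Cayley--Hamilton trick substitutes higher convolution powers of $ \psi_{m_0}^{K_\infty} $ for the missing identity, and this is what makes the construction go through.
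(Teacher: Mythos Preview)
Your argument is correct and is exactly the classical Harish-Chandra argument that the paper cites from \cite[Theorem~1]{hc}: restrict to the finite-dimensional sum of $K_\infty$-isotypes containing $v$, use a $K_\infty$-symmetrized approximate identity from $J$ to produce an invertible operator on that space, and then invert it inside $\pi(J)$ via Cayley--Hamilton as a polynomial with vanishing constant term. A minor simplification is available: since $\supp\psi_m^{K_\infty}\subseteq\{kxk^{-1}:k\in K_\infty,\ x\in\supp\psi_m\}\searrow\{1_{G_\infty}\}$, the sequence $(\psi_m^{K_\infty})$ is itself an approximate identity, so $\pi(\psi_m^{K_\infty})w\to w$ follows directly without the Fubini step and the appeal to uniform convergence on compacta.
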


\begin{proof}
	This is proved by the argument from the last paragraph of the proof of \cite[Theorem 1]{hc}.
\end{proof}

\vskip .2in

\begin{Thm}\label{thm:126}
	Let $ G $ be a (Zariski) connected semisimple group defined over $ k $. Then, the closed irreducible admissible $ G(\bbA) $-invariant subspaces $ V $ of $ \left(r',\calS'\right) $ stand in one-one correspondence with the irreducible (admissible) $ (\frakg_\infty,K_\infty)\times G(\bbA_f) $-submodules $ W $ of $ \calA(G(k)\backslash G(\bbA)) $, the correspondence $ V\leftrightarrow W $ being
	\[ W=V_{K_\infty}^\infty\qquad\text{and}\qquad V=\Cl_{\calS'}W. \]
\end{Thm}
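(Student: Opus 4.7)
I aim to verify the correspondence in both directions, with $V\mapsto V^\infty_{K_\infty}$ and $W\mapsto\Cl_{\calS'}W$ as candidate inverses. The proof will rest on three tools already available: Theorem \ref{thm:145} identifying the G\aa rding subspace with $\calA_{umg}(G(k)\backslash G(\bbA))$, Lemma \ref{lem:129} for density of $V^\infty_{K_\infty}$ in $V$, and Lemma \ref{lem:128} combined with the reductive-group decomposition $G_\infty=(G_\infty)_\circ K_\infty$ for promoting $(\frakg_\infty,K_\infty)$-invariance to full $G_\infty$-invariance.

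For the forward direction, assume $V$ is closed, irreducible, and admissible. For each open compact subgroup $L$ of $G(\bbA_f)$, $V^L$ is then an admissible $(\frakg_\infty,K_\infty)$-representation; by Lemma \ref{lem:130} each $v\in V^L_{K_\infty}$ is of the form $v=r'(\psi)v$ with $\psi\in C_c^\infty(G_\infty)$, and since $r'(e_L)v=v$ one obtains $v=r'(\psi\otimes e_L)v\in V_{G(\bbA)\text{-G\aa rding}}\subseteq (\calS')_{G(\bbA)\text{-G\aa rding}}=\calA_{umg}(G(k)\backslash G(\bbA))$ by Theorem \ref{thm:145}. Combining this with $K_\infty$-finiteness and the fact that $Z(\frakg_\infty)$ preserves the finite-dimensional $\delta$-isotypic components of $V^L_{K_\infty}$ (hence acts locally finitely there), the vector $v$ is $Z(\frakg_\infty)$-finite, and so $W:=V^\infty_{K_\infty}\subseteq\calA(G(k)\backslash G(\bbA))$. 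Lemma \ref{lem:129} then gives $\Cl_{\calS'}W=V$. For irreducibility of $W$ as a $(\frakg_\infty,K_\infty)\times G(\bbA_f)$-module: for any nonzero submodule $W'\subseteq W$, I will show (in the next paragraph) that $\Cl_{\calS'}W'$ is $G(\bbA)$-invariant, hence equals $V$ by irreducibility of $V$; applying the continuous projection $E_\delta^L$ then yields $V^L(\delta)\subseteq\Cl_{\calS'}(W')^L(\delta)=(W')^L(\delta)$, the closure being redundant because admissibility of $V$ makes $(W')^L(\delta)\subseteq V^L(\delta)$ finite-dimensional and therefore closed; summing over $L,\delta$ gives $W'=W$.

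For the reverse direction, set $V:=\Cl_{\calS'}W$. Admissibility of $W$ itself follows from Lemma \ref{lem:023}\,(2)(iii) combined with the classical admissibility of $\calA(\Gamma\backslash G_\infty)$ as a $(\frakg_\infty,K_\infty)$-module. The continuity of $r'$ (Proposition \ref{prop:044}) gives $G(\bbA_f)$-invariance of $V$ immediately; the $G_\infty$-invariance is the crucial step. Each $v\in W$ is $G_\infty$-smooth, $K_\infty$-finite, and $Z(\frakg_\infty)$-finite as an automorphic form, so Lemma \ref{lem:128} implies $\Cl_V r'(U(\frakg_\infty))v$ is $(G_\infty)_\circ$-invariant; since $r'(U(\frakg_\infty))v\subseteq W\subseteq V$, this proves $(G_\infty)_\circ\cdot W\subseteq V$, and combined with the $K_\infty$-invariance of $W$ and the decomposition $G_\infty=(G_\infty)_\circ K_\infty$, we conclude $r'(G_\infty)W\subseteq V$, hence by continuity $r'(G_\infty)V\subseteq V$. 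Admissibility of $V$ is obtained from the inclusion $V^L(\delta)=E_\delta^L(\Cl_{\calS'}W)\subseteq\Cl_{\calS'}E_\delta^L(W)=\Cl_{\calS'}W^L(\delta)=W^L(\delta)$, the last equality because $W^L(\delta)$ is finite-dimensional (admissibility of $W$) and hence closed; as a byproduct this also gives $V^\infty_{K_\infty}=W$. For irreducibility, if $V'\subseteq V$ is a closed nonzero $G(\bbA)$-invariant subspace, then by Lemma \ref{lem:129} we have $(V')^\infty_{K_\infty}\neq 0$, and $(V')^\infty_{K_\infty}\subseteq V^\infty_{K_\infty}=W$ is a nonzero $(\frakg_\infty,K_\infty)\times G(\bbA_f)$-submodule; irreducibility of $W$ forces $(V')^\infty_{K_\infty}=W$, so by density $V'=\Cl_{\calS'}W=V$.

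The main obstacle is the promotion of $(\frakg_\infty,K_\infty)\times G(\bbA_f)$-invariance of $W$ to genuine $G(\bbA)$-invariance of its closure $V=\Cl_{\calS'}W$: the $(\frakg_\infty,K_\infty)$-action does not a priori encode the global $G_\infty$-action on vectors in the dual space $\calS'$, and it is precisely the hypothesis $W\subseteq\calA(G(k)\backslash G(\bbA))$ (supplying $Z(\frakg_\infty)$-finiteness) together with Lemma \ref{lem:128} that bridges this gap. A secondary technical point is justifying admissibility of $W$ from the mere assumption of irreducibility, which I handle via the transfer to $\calA(\Gamma_{c,L}\backslash G_\infty)$ supplied by Lemma \ref{lem:023}.
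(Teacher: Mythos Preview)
Your proof is essentially correct and follows the same strategy as the paper: both directions hinge on Lemma~\ref{lem:128} to promote $(\frakg_\infty,K_\infty)$-invariance to $(G_\infty)_\circ$-invariance of the closure, on the continuous projectors $E_\delta^L$ together with finite-dimensionality of isotypic pieces to identify $V^\infty_{K_\infty}$ with $W$, and on Lemma~\ref{lem:129} for density. Your organization (treating $V\mapsto W$ first) and your use of the decomposition $G_\infty=(G_\infty)_\circ K_\infty$ in place of the paper's auxiliary space $W_0=\sum_{T\in W}\Cl_{\calS'}r'(U(\frakg_\infty))T$ are cosmetic differences.

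There is, however, one genuine error. You assert that admissibility of $W$ ``follows from Lemma~\ref{lem:023}(2)(iii) combined with the classical admissibility of $\calA(\Gamma\backslash G_\infty)$ as a $(\frakg_\infty,K_\infty)$-module.'' But $\calA(\Gamma\backslash G_\infty)$ is \emph{not} admissible: already for $\SL2(\bbR)$ and $\Gamma=\SL2(\bbZ)$, the trivial $K_\infty$-isotypic component contains infinitely many linearly independent Maass forms. What Harish--Chandra's finiteness theorem gives is that, for each ideal $J\subseteq Z(\frakg_\infty)$ of finite codimension, the subspace $\calA(\Gamma\backslash G_\infty,J)$ is admissible. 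To deduce admissibility of an irreducible $W$ you would first need to know that $Z(\frakg_\infty)$ acts on $W$ through a finite quotient, which in turn typically uses a countable-dimensionality/Dixmier--Schur argument you have not supplied. Fortunately this does not damage the proof: the theorem statement already takes admissibility of $W$ as a hypothesis (the parenthetical ``(admissible)''), exactly as the paper does in its proof, so you can simply drop that sentence and your final paragraph's ``secondary technical point.''
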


\begin{proof}
	Let us prove that for every irreducible admissible $ (\frakg_\infty,K_\infty)\times G(\bbA_f) $-submodule $ W $ of $ \calA(G(k)\backslash G(\bbA)) $, $ \overline W:=\Cl_{\calS'}W $ is an irreducible admissible representation of $ G(\bbA) $, and $ \overline W_{K_\infty}^\infty=W $. We will do this in three steps, by proving the following claims:
	\begin{enumerate}[label=\textup{(\arabic*)},leftmargin=*,align=left]
		\item\label{enum:127:1} $ \overline W $ is $ G(\bbA) $-invariant.
		\item\label{enum:127:2} $ \overline W_{K_\infty}^\infty=W $.
		\item\label{enum:127:3} $ \overline W $ is an irreducible representation of $ G(\bbA) $.
	\end{enumerate}
	
	\vskip .2in
	
	\ref{enum:127:1} The space $ \overline W $ is clearly $ K_\infty $-invariant and $ G(\bbA_f) $-invariant, hence it suffices to prove that it is $ \left(G_\infty\right)_\circ $-invariant, where $ \left(G_\infty\right)_\circ $ is the identity component of $ G_\infty $. Since every $ T\in W $ is a $ G_\infty $-smooth, $ Z(\frakg_\infty) $-finite and $ K_\infty $-finite vector in $ \left(r',\calS'\right) $ (see Proposition \hyperref[prop:044:2]{\ref*{prop:044}\ref*{prop:044:2}} and the definition of $ \calA(G(k)\backslash G(\bbA)) $), by Lemma \ref{lem:128} $ \Cl_{\calS'}r'(U(\frakg_\infty))T $ is $ \left(G_\infty\right)_\circ $-invariant. Thus, the space
	\[ W_0:=\sum_{T\in W}\Cl_{\calS'}r'(U(\frakg_\infty))T \]
	is $ \left(G_\infty\right)_\circ $-invariant. Since $ W\subseteq W_0\subseteq\overline W $, we have that $ \overline W=\Cl_{\calS'}W_0 $, hence $ \overline W $ is also $ \left(G_\infty\right)_\circ $-invariant.
	
	\vskip .2in
	
	\ref{enum:127:2} For every $ \delta\in\widehat K_\infty $ and every open compact subgroup $ L $ of $ G(\bbA_f) $, we have
	\[ \overline W^L(\delta)=E_\delta^L\overline W\subseteq\Cl_{\calS'}E_\delta^LW=\Cl_{\calS'}W^L(\delta)=W^L(\delta), \]
	where the set inclusion is valid by the continuity of $ E_\delta^L $, and the last equality holds because $ W^L(\delta) $ is finite-dimensional. It follows that $ \overline W^\infty_{K_\infty}\subseteq W $, and the reverse inclusion is obvious.
	
	\vskip .2in
	
	\ref{enum:127:3} Let $ U $ be a closed $ G(\bbA) $-invariant subspace of $ \overline W $. Then, $ U_{K_\infty}^\infty $ is a $ (\frakg_\infty,K_\infty)\times G(\bbA_f) $-submodule of $ \overline W^\infty_{K_\infty}=W $, hence $ U_{K_\infty}^\infty\in\left\{0,W\right\} $ by the irreducibility of $ W $. Thus, 
	\[ U\overset{\text{Lemma \ref{lem:129}}}=\Cl_{\calS'}U_{K_\infty}^\infty\in\left\{0,\overline W\right\}.\] 
	This proves the claim.
	
	\vskip .2in
	
	Conversely, let $ V $ be a closed irreducible admissible $ G(\bbA) $-invariant subspace of $ \left(r',\calS'\right) $. The $ (\frakg_\infty,K_\infty)\times G(\bbA_f) $-module $ V_{K_\infty}^\infty $ is admissible by Definition \hyperref[def:129:1]{\ref*{def:129}\ref*{def:129:1}} and dense in $ V $ by Lemma \ref{lem:129}. To finish the proof of the theorem, we need to prove the following two claims:
	\begin{enumerate}[resume]
		\item\label{enum:127:4} $ V_{K_\infty}^\infty\subseteq\calA(G(k)\backslash G(\bbA)) $.  
		\item\label{enum:127:5} $ V_{K_\infty}^\infty $ is an irreducible $ (\frakg_\infty,K_\infty)\times G(\bbA_f) $-module.
	\end{enumerate}
 
	\vskip .2in
	
	\eqref{enum:127:4} Let $ L $ be an open compact subgroup of $ G(\bbA_f) $. By Definition \hyperref[def:129:2]{\ref*{def:129}\ref*{def:129:2}}, $ V^L $ is a closed admissible $ G_\infty $-invariant subspace of $ \left(\calS'\right)^L $, hence
	\[ \left(V^L\right)_{K_\infty}\overset{\eqref{eq:155}}\subseteq\left(V^L\right)_{G_\infty\text{-G\aa rding}}\subseteq\left(\left(\calS'\right)^L\right)_{G_\infty\text{-G\aa rding}}\overset{\eqref{eq:156}}=\calA_{umg}(G(k)\backslash G(\bbA))^L. \]
	Since all vectors in $ \left(V^L\right)_{K_\infty} $ are $ K_\infty $-finite by definition and $ Z(\frakg_\infty) $-finite by \cite[Theorem 3.4.1]{wallach1}, it follows that $ (V^L)_{K_\infty}\subseteq\calA(G(k)\backslash G(\bbA))^L $. Thus, $ V_{K_\infty}^\infty\subseteq\calA(G(k)\backslash G(\bbA)) $.  
	 
	\vskip .2in
	
	\eqref{enum:127:5} Let $ W $ be a non-zero $ (\frakg_\infty,K_\infty)\times G(\bbA_f) $-submodule of $ V_{K_\infty}^{\infty} $. Then, one sees as in the proof of \ref{enum:127:1} that $ \Cl_{\calS'}W $ is $ G(\bbA) $-invariant, hence $ \Cl_{\calS'}W=V $ by the irreducibility of $ V $. Thus, for every open compact subgroup $ L $ of $ G(\bbA_f) $ and every $ \delta\in\widehat K_\infty $, we have
	\[ V^L(\delta)=E_\delta^LV=E_\delta^L\Cl_{\calS'}W\subseteq\Cl_{\calS'}E_\delta^LW=\Cl_{\calS'}W^L(\delta)=W^L(\delta), \]
	where the last equality holds because $ W^L(\delta)\subseteq V^L(\delta) $ is finite-dimensional.
	It follows that $ V_{K_\infty}^\infty\subseteq W $, hence $ W=V_{K_\infty}^\infty $. This proves \eqref{enum:127:5}.
\end{proof}

\vskip .2in

\begin{Rem}
	It is well-known that every closed irreducible $ G(\bbA) $-invariant subspace of $ L^2(G(k)\backslash G(\bbA)) $ (under the right regular representation) is of the form
	\[ \Cl_{L^2(G(k)\backslash G(\bbA))}W \]
	for some irreducible (admissible) $ (\frakg_\infty,K_\infty)\times G(\bbA_f) $-submodule $ W $ of
	\[ \calA^2(G(k)\backslash G(\bbA)):=\calA(G(k)\backslash G(\bbA))\cap L^2(G(k)\backslash G(\bbA)) \]
	\cite[\S4.6]{BJ}. The representations $ \Cl_{L^2(G(k)\backslash G(\bbA))}W $ and $ \Cl_{\calS'}W $ are related as follows. One sees easily that $ L^2(G(k)\backslash G(\bbA)) $ maps into $ \calS' $ via the continuous, $ G(\bbA) $-equivariant, injective linear operator 
	\begin{equation}\label{eq:157}
	\varphi\mapsto T_\varphi,
	\end{equation}
	where $ T_\varphi $ is defined by \eqref{eq:122}. By restriction, the rule \eqref{eq:157} defines a continuous, $ G(\bbA) $-equivariant, injective linear operator
	\[ \Cl_{L^2(G(k)\backslash G(\bbA))}W\to\Cl_{\calS'}W \] 
	whose image is dense in $ \Cl_{\calS'}W $.
\end{Rem}

\section{The space $ \calS(G(\bbA)) $}\label{sec:107}

Let us define a vector space
\[ \calS(G(\bbA)):=\calS(G_\infty)\otimes C_c^\infty(G(\bbA_f)), \]
where $ \calS(G_\infty):=\calS\left(\left\{1_{G_\infty}\right\}\backslash G_\infty\right) $. We will equip $ \calS(G(\bbA)) $ with a locally convex topology with respect to which the linear operator $ P_{G(k)}:\calS(G(\bbA))\to\calS $,
\begin{equation}\label{eq:048}
P_{G(k)}f:=\sum_{\delta\in G(k)}f(\delta\spacedcdot),
\end{equation}
is well-defined, continuous and surjective.

\vskip .2in

For every open compact subgroup $ L $ of $ G(\bbA_f) $ and every compact subset $ \Omega $ of $ G(\bbA_f) $ such that $ \Omega L= \Omega $, let us define the following linear subspace of $ \calS(G(\bbA)) $:
\[ \calS(G(\bbA))_{\Omega}^L:=\left\{f\in\calS(G_\infty)\otimes C_c^\infty( G(\bbA_f))^L:\supp f\subseteq G_\infty\times \Omega\right\}. \]
Writing $ \Omega $ in the form $ \Omega=\bigsqcup_{c\in\Lambda}cL $ for some finite subset $ \Lambda $ of $ G(\bbA_f) $, we have
\begin{equation}\label{eq:049}
\calS(G(\bbA))_\Omega^L=\calS(G_\infty)\otimes\sum_{c\in \Lambda}\bbC\mathbbm1_{cL}.
\end{equation}
Let us equip the space $ \calS(G(\bbA))_\Omega^L $ with the locally convex topology generated by the seminorms
\[ \norm f_{u,-n}:=\sup_{(x,c)\in G(\bbA)}\abs{(uf)(x,c)}\norm{x}^n, \]
where $ u\in U(\frakg_\infty) $ and $ n\in\bbZ_{\geq0} $. This turns $ \calS(G(\bbA))_{\Omega}^L $ into a Fr\'echet space that is isomorphic to the direct sum $ \bigoplus_{c\in\Lambda}\calS(G_\infty) $ via the isomorphism suggested by \eqref{eq:049}.

\vskip .2in

Let us equip the space $ \calS(G(\bbA)) $ with the finest locally convex topology with respect to which the inclusion maps $ \iota_{\Omega}^L:\calS(G(\bbA))_\Omega^L\hookrightarrow\calS(G(\bbA)) $ are continuous. One sees easily that it suffices to require that the inclusion maps $ \iota_{\Omega_m}^{L_m} $, $ m\in\bbZ_{>0} $, be continuous for some sequence $ (L_m,\Omega_m)_{m\in\bbZ_{>0}} $ such that $ \Omega_m\nearrow G(\bbA_f) $, $ L_m\searrow \left\{1_{G(\bbA_f)}\right\} $, and $ \left\{L_m:m\in\bbZ_{>0}\right\} $ is a neighborhood basis of $ 1_{G(\bbA_f)} $ in $ G(\bbA_f) $. In other words, $ \calS(G(\bbA)) $ is an LF-space with a defining sequence $ \left(\calS(G(\bbA))^{L_m}_{\Omega_m}\right)_{m\in\bbZ_{>0}} $ (see Definition \ref{def:045}). In particular, by Lemma \hyperref[lem:006:1]{\ref*{lem:006}\ref*{lem:006:1}} $ \calS(G(\bbA)) $ is a complete locally convex topological vector space.

\vskip .2in

The main result of this section is Theorem \ref{thm:043}, in which we prove that the linear operator $ P_{G(k)}:\calS(G(\bbA))\to\calS $ is well-defined, continuous and surjective. The proof relies on the analogous result \cite[Proposition 1.11 and Theorem 2.2]{casselman} for the operators $ P_{\Gamma_{c,L}}:\calS(G_\infty)\to\calS(\Gamma_{c,L}\backslash G_\infty) $, and uses the following lemma.

\vskip .2in

\begin{Lem}\label{lem:047}
	Let $ A:\calS(G(\bbA))\to\calS $ be a linear operator such that for every open compact subgroup $ L $ of $ G(\bbA_f) $ and every compact subset $ \Omega $ of $ G(\bbA_f) $ such that $ \Omega L=\Omega $, there exists an open compact subgroup $ L' $ of $ G(\bbA_f) $ such that $ A\left(\calS(G(\bbA))^L_\Omega\right)\subseteq\calS^{L'} $. Then, $ A $ is continuous if and only if the restrictions $ A\big|_{\calS(G(\bbA))^L_\Omega}:\calS(G(\bbA))^L_\Omega\to\calS^{L'} $ are continuous.
\end{Lem}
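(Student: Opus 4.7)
My plan is to mirror the proof of Lemma \hyperref[lem:041:5]{\ref*{lem:041}\ref*{lem:041:5}}, whose statement is the direct analogue of the present lemma but with $ \calS $ in place of $ \calS(G(\bbA)) $. The underlying observation is that $ \calS(G(\bbA)) $ has been constructed immediately above as an LF-space whose defining family consists of the Fr\'echet spaces $ \calS(G(\bbA))^L_\Omega $, so the universal property of the inductive limit topology (as encoded in Lemma \hyperref[lem:006:7]{\ref*{lem:006}\ref*{lem:006:7}}) is at our disposal.

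The first step is to apply this universal property with codomain $ V=\calS $: a linear map $ A:\calS(G(\bbA))\to\calS $ is continuous if and only if each restriction $ A\big|_{\calS(G(\bbA))^L_\Omega}:\calS(G(\bbA))^L_\Omega\to\calS $ is continuous. This reduces the question to the continuity of these restrictions as maps into $ \calS $, rather than into $ \calS^{L'} $.

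The second step is to bridge the gap between continuity into $ \calS $ and continuity into $ \calS^{L'} $. By hypothesis, for each pair $ (L,\Omega) $ the restriction $ A\big|_{\calS(G(\bbA))^L_\Omega} $ takes values in $ \calS^{L'} $ for some open compact subgroup $ L' $ of $ G(\bbA_f) $. By Lemma \hyperref[lem:041:2]{\ref*{lem:041}\ref*{lem:041:2}}, $ \calS^{L'} $ is a closed subspace of $ \calS $, so in particular it carries the subspace topology; equivalently, the inclusion $ \calS^{L'}\hookrightarrow\calS $ is a topological embedding. Consequently, a linear map into $ \calS^{L'} $ is continuous precisely when, regarded as a map into $ \calS $, it is continuous. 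Combining this with the first step gives the lemma in both directions.

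I do not anticipate any genuine obstacle: the proof is purely formal, resting on the LF-space structures of $ \calS(G(\bbA)) $ and $ \calS $ together with the closedness of each $ \calS^{L'} $ in $ \calS $. The only thing worth a moment of care is verifying that the appendix lemma \hyperref[lem:006:7]{\ref*{lem:006}\ref*{lem:006:7}} is stated with sufficient generality to apply to the LF-space $ \calS(G(\bbA)) $ and an arbitrary locally convex codomain, which follows from its analogous use in the proof of Lemma \hyperref[lem:041:4]{\ref*{lem:041}\ref*{lem:041:4}}.
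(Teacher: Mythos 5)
Your proposal is correct and is essentially the paper's own argument: the paper proves Lemma \ref{lem:047} by declaring it "completely analogous" to Lemma \hyperref[lem:041:5]{\ref*{lem:041}\ref*{lem:041:5}}, which in turn rests precisely on the two ingredients you use — the universal property of the inductive limit topology (Lemma \hyperref[lem:006:7]{\ref*{lem:006}\ref*{lem:006:7}}, applied to $\calS(G(\bbA))$ with codomain $\calS$) and the fact that each $\calS^{L'}$ is a closed topological subspace of $\calS$ (Lemma \hyperref[lem:041:2]{\ref*{lem:041}\ref*{lem:041:2}}), so that continuity into $\calS^{L'}$ and into $\calS$ coincide.
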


\begin{proof}
	Completely analogous to the proof of Lemma \hyperref[lem:041:5]{\ref*{lem:041}\ref*{lem:041:5}}. 
\end{proof}

\vskip .2in

\begin{Thm}\label{thm:043}
	Let $ L $ be an open compact subgroup of $ G(\bbA_f) $, and let $ \Omega $ be a compact subset of $ G(\bbA_f) $ such that $ \Omega L=\Omega $. Then, we have the following:
	\begin{enumerate}[label=\textup{(\arabic*)},leftmargin=*,align=left]
		\item\label{thm:043:1} For every $ f\in\calS(G(\bbA))_\Omega^L $, the series $ P_{G(k)}f $ defined by \eqref{eq:048} converges absolutely and uniformly on compact subsets of $ G(\bbA) $, and $ P_{G(k)}f\in\calS^L $.
		\item\label{thm:043:2} If $ G(k)\Omega=G(\bbA_f) $ (e.g., if $ \Omega=CL $, where $ C $ is as in \eqref{eq:111}), then
		the linear operator
		\begin{equation}\label{eq:006}
		P_{G(k)}:\calS(G(\bbA))_{\Omega}^L\to\calS^L 
		\end{equation}
		is continuous and surjective.
		\item\label{thm:043:3} The linear operator 
		\[ P_{G(k)}:\calS(G(\bbA))\to\calS \] 
		is continuous and surjective.
	\end{enumerate}
\end{Thm}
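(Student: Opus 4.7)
The strategy is to reduce everything to Casselman's theorem on the Poincar\'e series $P_{\Gamma}:\calS(G_\infty)\to\calS(\Gamma\backslash G_\infty)$ via the finite direct-sum isomorphism $\Phi:\calS^L\xrightarrow{\sim}\bigoplus_{c\in C}\calS(\Gamma_{c,L}\backslash G_\infty)$ of Lemma \ref{lem:009}. Fix $C$ as in \eqref{eq:111}, and write $\Omega=\bigsqcup_{c'\in\Lambda}c'L$ as in \eqref{eq:049}, so that
\[
\calS(G(\bbA))^L_\Omega=\bigoplus_{c'\in\Lambda}\calS(G_\infty)\otimes\mathbbm1_{c'L}
\]
as Fr\'echet spaces.

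For part \ref{thm:043:1}, it suffices by linearity to treat an elementary tensor $f=\psi\otimes\mathbbm1_{c'L}$ with $\psi\in\calS(G_\infty)$. For $c\in C$, the sum $(P_{G(k)}f)(x,c)=\sum_{\delta\in G(k)}\psi(\delta_\infty x)\,\mathbbm1_{c'L}(\delta_f c)$ is supported on $\delta\in G(k)\cap c'Lc^{-1}$. The key observation is that this set is either empty (when $G(k)cL\neq G(k)c'L$) or equals a single right coset $\eta\,\Gamma_{c,L}$ for any $\eta\in G(k)\cap c'Lc^{-1}$; this is a direct check using $\eta_f=c'l_0c^{-1}$ and the definition $\Gamma_{c,L}=G(k)\cap cLc^{-1}$. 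In the non-empty case,
\[
(P_{G(k)}f)(x,c)=\sum_{\gamma\in\Gamma_{c,L}}\psi(\eta_\infty\gamma_\infty x)=\bigl(P_{\Gamma_{c,L}}\tilde\psi\bigr)(x),
\]
where $\tilde\psi(y):=\psi(\eta_\infty y)$ lies in $\calS(G_\infty)$ since the Schwartz space is invariant under left translation (use \ref{enum:001:2}). Casselman's \cite[Proposition 1.11]{casselman} then supplies absolute and locally uniform convergence together with $(P_{G(k)}f)_c\in\calS(\Gamma_{c,L}\backslash G_\infty)$; by Lemma \hyperref[lem:009:2]{\ref*{lem:009}\ref*{lem:009:2}} this yields $P_{G(k)}f\in\calS^L$.

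For part \ref{thm:043:2}, continuity of $P_{G(k)}:\calS(G(\bbA))^L_\Omega\to\calS^L$ is immediate: after composing with $\Phi$, the map is a $\Lambda\times C$ matrix of continuous operators, each either zero or the composition of left translation on $\calS(G_\infty)$ with Casselman's continuous $P_{\Gamma_{c,L}}$. For surjectivity, the hypothesis $G(k)\Omega=G(\bbA_f)$ combined with $\Omega L=\Omega$ lets us replace every $c\in C$ by a $G(k)$-translate lying in $\Omega$, so we may assume $C\subseteq\Omega$. Given $g\in\calS^L$ with $\Phi(g)=(g_c)_{c\in C}$, apply Casselman's \cite[Theorem 2.2]{casselman} to pick $\psi_c\in\calS(G_\infty)$ with $P_{\Gamma_{c,L}}\psi_c=g_c$, and set $f:=\sum_{c\in C}\psi_c\otimes\mathbbm1_{cL}\in\calS(G(\bbA))^L_\Omega$. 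The elementary-tensor computation of part \ref{thm:043:1}, together with the disjointness $G(\bbA_f)=\bigsqcup_{c\in C}G(k)cL$ (which ensures that for $c_0\in C$ only the diagonal term $c=c_0$ contributes), gives $(P_{G(k)}f)_{c_0}=P_{\Gamma_{c_0,L}}\psi_{c_0}=g_{c_0}$, hence $P_{G(k)}f=g$.

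Part \ref{thm:043:3} is then formal: continuity follows from Lemma \ref{lem:047} and part \ref{thm:043:1} (since $P_{G(k)}$ maps $\calS(G(\bbA))^L_\Omega$ into $\calS^L$); surjectivity follows from part \ref{thm:043:2} because any $g\in\calS$ lies in some $\calS^L$ and $\Omega:=CL$ is compact with $G(k)\Omega=G(\bbA_f)$. The main obstacle is the coset identification $G(k)\cap c'Lc^{-1}=\eta\,\Gamma_{c,L}$ in the analysis of elementary tensors and the reinterpretation of the surviving inner sum as a left-translated classical Poincar\'e series; once that is in hand, everything reduces to finite direct-sum bookkeeping over Casselman's theorem.
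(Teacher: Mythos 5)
Your proposal is correct and takes essentially the same route as the paper: both reduce matters to Casselman's Proposition 1.11 and Theorem 2.2 via the isomorphism $\Phi:\calS^L\to\bigoplus_{c\in C}\calS(\Gamma_{c,L}\backslash G_\infty)$ of Lemma \ref{lem:009}, and your surjectivity construction $f=\sum_{c\in C}\psi_c\otimes\mathbbm1_{cL}$ is exactly the paper's. The only difference is bookkeeping: you specialize to elementary tensors $\psi\otimes\mathbbm1_{c'L}$ and identify the support set $G(k)\cap c'Lc^{-1}$ as a single coset $\eta\,\Gamma_{c,L}$, whereas the paper treats a general $f\in\calS(G(\bbA))^L_\Omega$ by partitioning $G(k)$ into $\Gamma_{c,L}$-cosets, which amounts to the same computation by linearity.
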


\begin{proof}
	\ref{thm:043:1} Let $ f\in\calS(G(\bbA))_{\Omega}^L $ and $ c\in G(\bbA_f) $. Let $ \Delta_{c,L}\subseteq G(k) $ be such that $ G(k)=\bigsqcup_{\delta\in\Delta_{c,L}}\delta\Gamma_{c,L} $. We have
	\begin{equation}\label{eq:158}
	\begin{aligned}
	\left(P_{G(k)}f\right)_c&=\sum_{\delta\in G(k)}f(\delta_\infty\spacedcdot,\delta_fc)\\
	&=\sum_{\delta\in \Delta_{c,L}}\sum_{\gamma\in\Gamma_{c,L}}f\big(\delta_\infty\gamma_\infty\spacedcdot,\delta_fc\underbrace{c^{-1}\gamma_fc}_{\in L}\big)\\
	&=\sum_{\delta\in \Delta_{c,L}}P_{\Gamma_{c,L}}\big(f(\delta_\infty\spacedcdot,\delta_fc)\big).
	\end{aligned}
	\end{equation}
	Since $ f(\delta_\infty\spacedcdot,\delta_fc)\equiv0 $ if $ \delta_fc\notin \Omega $, the set $ \Delta_{c,L} $ on the right-hand side of this equality can be replaced by the set
	\[ \Delta_{c,L}':=\left\{\delta\in \Delta_{c,L}:\delta_fc\in \Omega\right\}. \]
	Note that $ \Delta_{c,L}' $ is finite since $ \Omega $ is a union of finitely many left $ L $-cosets, and the cosets $ \delta_fcL $, $ \delta\in \Delta_{c,L} $, are mutually disjoint. Moreover, for every $ \delta\in \Delta_{c,L} $ we have that $ f(\delta_\infty\spacedcdot,\delta_fc)\in\calS(G_\infty) $, hence the series $ P_{\Gamma_{c,L}}\big(f(\delta_\infty\spacedcdot,\delta_fc)\big) $ converges absolutely on $ G_\infty $ to an element of $ \calS(\Gamma_{c,L}\backslash G_\infty) $ by \cite[Proposition 1.11]{casselman}; the convergence is uniform on compact subsets of $ G_\infty $ by estimates similar to the ones in the proofs of \cite[Lemma 1.10 and Proposition 1.11]{casselman}. Combined with the right $ L $-invariance of $ f $, this implies that the series $ P_{G(k)}f $ converges absolutely and uniformly on compact subsets of $ G(\bbA) $; moreover, $ P_{G(k)}f\in C^\infty(G(k)\backslash G(\bbA))^L $ and
	\begin{equation}\label{eq:009}
	\left(P_{G(k)}f\right)_c=\sum_{\delta\in \Delta_{c,L}'}P_{\Gamma_{c,L}}\left(f(\delta_\infty\spacedcdot,\delta_fc)\right)\in\calS(\Gamma_{c,L}\backslash G_\infty)
	\end{equation}
	for every $ c\in G(\bbA_f) $. By \eqref{eq:008}, it follows that $ P_{G(k)}f\in\calS^L $.
	
	\vskip .2in 
	
	\ref{thm:043:2} To prove that the linear operator \eqref{eq:006} is continuous, let $ u\in U(\frakg_\infty) $, $ n\in\bbZ_{\geq0} $ and $ c\in G(\bbA_f) $. By the continuity of left translations $ \calS(G_\infty)\to\calS(G_\infty) $ and of the operators $ P_{\Gamma_{c,L}}:\calS(G_\infty)\to\calS(\Gamma_{c,L}\backslash G_\infty) $ \cite[Proposition 1.11]{casselman}, there exist $ m\in\bbZ_{>0} $, $ u_1,\ldots,u_m\in U(\frakg_\infty) $ and $ n_1,\ldots,n_m\in\bbZ_{\geq0} $ such that
	\begin{equation}\label{eq:007}
	\norm{P_{\Gamma_{c,L}}\left(\varphi(\delta_\infty\spacedcdot)\right)}_{u,-n,\Gamma_{c,L}}\leq\sum_{i=1}^m\norm\varphi_{u_i,-n_i,\left\{1_{G_\infty}\right\}}
	\end{equation}
	for all $ \varphi\in\calS(G_\infty) $ and $ \delta\in \Delta_{c,L}' $. Thus, we have
	\begin{align*}
	\norm{P_{G(k)}f}_{u,-n,c,L}&\underset{\eqref{eq:009}}{\overset{\eqref{eq:008}}\leq}\sum_{\delta\in \Delta_{c,L}'}\norm{P_{\Gamma_{c,L}}\left(f(\delta_\infty\spacedcdot,\delta_fc)\right)}_{u,-n,\Gamma_{c,L}}\\
	&\underset{\phantom{\eqref{eq:008}}}{\overset{\eqref{eq:007}}\leq}\sum_{\delta\in \Delta_{c,L}'}\sum_{i=1}^m\norm{f(\spacedcdot,\delta_fc)}_{u_i,-n_i,\left\{1_{G_\infty}\right\}}\\
	&\underset{\phantom{\eqref{eq:008}}}\leq\abs{\Delta_{c,L}'}\sum_{i=1}^m\norm f_{u_i,-n_i},\qquad f\in \calS(G(\bbA))_\Omega^L.
	\end{align*}
	This estimate proves that the linear operator \eqref{eq:006} is continuous.
	
	\vskip .2in 
	
	Let us prove that the operator \eqref{eq:006} is surjective. Since $ G(\bbA_f)=G(k)\Omega $, there exists a (finite) set $ C\subseteq\Omega $ such that $ G(\bbA_f)=\bigsqcup_{c\in C}G(k)cL $. Let $ \varphi\in\calS^L $. By Lemma \hyperref[lem:009:2]{\ref*{lem:009}\ref*{lem:009:2}}, $ \varphi_c\in\calS(\Gamma_{c,L}\backslash G_\infty) $ for all $ c\in C $. Thus, by \cite[Theorem 2.2]{casselman} for every $ c\in C $ there exists a function $ f_c\in\calS(G_\infty) $ such that $ P_{\Gamma_{c,L}}f_c=\varphi_c $. We define
	\begin{equation}\label{eq:121}
	f:=\sum_{c\in C}f_c\otimes\mathbbm1_{cL}\in\calS(G(\bbA))_\Omega^L.
	\end{equation}
	We will prove that $ P_{G(k)}f=\varphi $. Since both sides of this equality are $ G(k) $-invariant on the left and $ L $-invariant on the right, it suffices to show that
	$ \left(P_{G(k)}f\right)_c=\varphi_{c} $ for all $ c\in C $. We have
	\begin{align*}
	\left(P_{G(k)}f\right)_c&\overset{\eqref{eq:158}}=\sum_{\delta\in \Delta_{c,L}}P_{\Gamma_{c,L}}\big(f(\delta_\infty\spacedcdot,\delta_fc)\big)\\
	&\overset{\eqref{eq:121}}=\sum_{\delta\in\Delta_{c,L}}\sum_{a\in C}P_{\Gamma_{c,L}}\left(f_a\left(\delta_\infty\spacedcdot\right)\right)\mathbbm1_{aL}\left(\delta_fc\right)\\
	&\overset{\phantom{\eqref{eq:121}}}=P_{\Gamma_{c,L}}f_{c}\\
	&\overset{\phantom{\eqref{eq:121}}}=\varphi_{c},\qquad\qquad c\in C,
	\end{align*}	
	where the third equality holds because for $ a,c\in C $ and $ \delta\in G(k) $ the following elementary equivalence holds:
	\[ \mathbbm1_{aL}(\delta_fc)=1\quad\Leftrightarrow\quad a=c\text{ \ and \ }\delta\in \Gamma_{c,L}. \]
	
	\vskip .2in
	
	\ref{thm:043:3} The claim follows from \ref{thm:043:2} by Lemma \ref{lem:047}.
\end{proof}

\section{Poincar\'e series on $ G(\bbA) $}\label{sec:108}

In this section, we apply our results to the Poincar\'e series $ P_{G(k)}\varphi $ of functions $ \varphi\in L^1(G(\bbA)) $. The main results of this section---Propositions \ref{prop:090} and \ref{prop:091}---may be regarded as the adelic version of \cite[Proposition 6.4]{muicRadHAZU}.

\vskip .2in

It is well-known (e.g., see \cite[\S4]{muicMathAnn}) that for every $ \varphi\in L^1(G(\bbA)) $ the Poincar\'e series
\[ P_{G(k)}\varphi:=\sum_{\delta\in G(k)}\varphi(\delta\spacedcdot) \]
converges absolutely almost everywhere on $ G(\bbA) $ and that
\[ \norm{P_{G(k)}\varphi}_{L^1(G(k)\backslash G(\bbA))}\leq\norm\varphi_{L^1(G(\bbA))}. \]
Thus, $ P_{G(k)} $ is a continuous linear operator $ L^1(G(\bbA))\to L^1(G(k)\backslash G(\bbA)) $, and it is $ G(\bbA) $-equivariant with respect to the right regular representations on $ L^1(G(\bbA)) $ and $ L^1(G(k)\backslash G(\bbA)) $. Next, one checks easily that the rule $ \varphi\mapsto T_\varphi $, where $ T_\varphi $ is defined by \eqref{eq:122}, is a continuous, $ G(\bbA) $-equivariant, injective linear operator $ L^1(G(k)\backslash G(\bbA))\to\calS' $. Thus, the composition
\begin{equation}\label{eq:084}
\varphi\mapsto T_{P_{G(k)}\varphi} 
\end{equation} 
is a continuous $ G(\bbA) $-equivariant linear operator $ L^1(G(\bbA))\to\calS' $. This enables us to prove the following proposition.

\vskip .2in 
 
\begin{Prop}\label{prop:090}
	Let $ \varphi\in L^1(G(\bbA))_{G(\bbA)\text{-G\aa rding}} $. Then, we have the following:
	\begin{enumerate}[label=\textup{(\arabic*)},leftmargin=*,align=left]
		\item\label{prop:088:1} The function $ P_{G(k)}\varphi\in L^1(G(k)\backslash G(\bbA)) $ coincides almost everywhere with an element of $ \calA_{umg}(G(k)\backslash G(\bbA)) $.
		\item\label{prop:088:2} If $ \varphi $ is $ Z(\frakg_\infty) $-finite, then the function $ P_{G(k)}\varphi\in L^1(G(k)\backslash G(\bbA)) $ coincides almost everywhere with an element of $ \calA^\infty(G(k)\backslash G(\bbA)) $.
		\item\label{prop:088:3} If $ \varphi $ is $ Z(\frakg_\infty) $-finite and $ K_\infty $-finite on the right, then the function $ P_{G(k)}\varphi\in L^1(G(k)\backslash G(\bbA)) $ coincides almost everywhere with an element of $ \calA(G(k)\backslash G(\bbA)) $.		
	\end{enumerate}
\end{Prop}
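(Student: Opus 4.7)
The plan is to route everything through the continuous $G(\bbA)$-equivariant operator $\Phi:L^1(G(\bbA))\to\calS'$, $\varphi\mapsto T_{P_{G(k)}\varphi}$, that the paper has already constructed just before the proposition, and to combine it with Theorem \ref{thm:145}\ref{thm:145:2}. The first observation I would record is that the continuity of $\Phi$, together with the defining property of the Gelfand--Pettis integral, upgrades the $G(\bbA)$-equivariance of $\Phi$ to equivariance for the convolution action of $\alpha\in C_c^\infty(G_\infty)\otimes C_c^\infty(G(\bbA_f))$:
\[
\Phi(\pi(\alpha)\varphi)=\Phi\!\left(\int_{G(\bbA)}\!\alpha(y)\,r_{L^1(G(\bbA))}(y)\varphi\,dy\right)=\int_{G(\bbA)}\!\alpha(y)\,r'(y)\Phi(\varphi)\,dy=r'(\alpha)\,\Phi(\varphi),
\]
where $\pi=r_{L^1(G(\bbA))}$. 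Consequently $\Phi$ maps $L^1(G(\bbA))_{G(\bbA)\text{-G\aa rding}}$ into $\left(\calS'\right)_{G(\bbA)\text{-G\aa rding}}$, which by Theorem \hyperref[thm:145:2]{\ref*{thm:145}\ref*{thm:145:2}} coincides with $\calA_{umg}(G(k)\backslash G(\bbA))$, embedded in $\calS'$ via \eqref{eq:122}.

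For \ref{prop:088:1}, take $\varphi\in L^1(G(\bbA))_{G(\bbA)\text{-G\aa rding}}$. By the above, there exists $\psi\in\calA_{umg}(G(k)\backslash G(\bbA))$ with $T_{P_{G(k)}\varphi}=T_\psi$ in $\calS'$, i.e.
\[
\int_{G(k)\backslash G(\bbA)}\!\big(P_{G(k)}\varphi\big)(x)\,f(x)\,dx=\int_{G(k)\backslash G(\bbA)}\psi(x)\,f(x)\,dx,\qquad f\in\calS.
\]
To promote this to equality almost everywhere, I would note that $C_c^\infty(G_\infty)\otimes C_c^\infty(G(\bbA_f))^L\subseteq\calS^L\subseteq\calS$ for every open compact $L$, since compact support makes the seminorms $\norm\spacedcdot_{u,-n,c,L}$ trivially finite; thus the difference $\psi-P_{G(k)}\varphi\in L^1_{loc}(G(k)\backslash G(\bbA))$ pairs to zero against a full space of test functions, hence vanishes almost everywhere.

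For \ref{prop:088:2} and \ref{prop:088:3}, the point is that because both $L^1(G(\bbA))_{G(\bbA)\text{-G\aa rding}}\subseteq L^1(G(\bbA))_{G_\infty\text{-smooth}}$ and all of $\calS'$ is $G_\infty$-smooth (Proposition \hyperref[prop:044:2]{\ref*{prop:044}\ref*{prop:044:2}}), the continuous $G_\infty$-equivariant map $\Phi$ automatically intertwines the differentiated $U(\frakg_\infty)$-actions, so $\Phi(z\varphi)=r'(z)\Phi(\varphi)$ for every $z\in U(\frakg_\infty)$. Hence if $\{z\varphi:z\in Z(\frakg_\infty)\}$ is finite-dimensional, so is $\{r'(z)T_\psi:z\in Z(\frakg_\infty)\}$; since the embedding $\calA_{umg}(G(k)\backslash G(\bbA))\hookrightarrow\calS'$ is itself $G_\infty$-equivariant (Lemma \hyperref[lem:060:1]{\ref*{lem:060}\ref*{lem:060:1}}), this means $\psi$ is $Z(\frakg_\infty)$-finite, proving \ref{prop:088:2}. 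Applying the same argument with the $K_\infty$-action---which is continuous on $L^1(G(\bbA))$, on $\calA_{umg}(G(k)\backslash G(\bbA))$, and on $\calS'$, and intertwined by $\Phi$---gives \ref{prop:088:3}. The only place that might require some care is checking that the embedding $\calA_{umg}\hookrightarrow\calS'$ really does transport the $Z(\frakg_\infty)$- and $K_\infty$-actions faithfully, but this is immediate from the explicit formula \eqref{eq:122} after an integration by parts for $Z(\frakg_\infty)$ and a change of variables for $K_\infty$, both of which were already implicitly used in Lemma \ref{lem:060}.
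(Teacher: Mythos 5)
Your route is the paper's own: push $ \varphi $ through the continuous $ G(\bbA) $-equivariant operator \eqref{eq:084}, use the Gelfand--Pettis property to see that it carries $ L^1(G(\bbA))_{G(\bbA)\text{-G\aa rding}} $ into $ \left(\calS'\right)_{G(\bbA)\textup{-G\aa rding}} $, identify the latter with $ \calA_{umg}(G(k)\backslash G(\bbA)) $ by Theorem \ref{thm:145}, and then transport $ Z(\frakg_\infty) $- and $ K_\infty $-finiteness through the equivariant maps to get \ref{prop:088:2} and \ref{prop:088:3}; this is exactly the paper's argument, with the details (which the paper leaves as ``the other two claims follow analogously'') filled in correctly, including the identity $ r'(u)T_\psi=T_{u\psi} $ for $ u\in U(\frakg_\infty) $.

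There is, however, one step that fails as written: the inclusion $ C_c^\infty(G_\infty)\otimes C_c^\infty(G(\bbA_f))^L\subseteq\calS^L $ is false. Membership in $ \calS^L $ requires left $ G(k) $-invariance (by definition $ \calS^L\subseteq C^\infty(G(k)\backslash G(\bbA))^L $), and a nonzero compactly supported function on $ G(\bbA) $ is not $ G(k) $-invariant; finiteness of the seminorms is not the issue. So you cannot pair $ \psi-P_{G(k)}\varphi $ against such functions inside $ \calS $. The repair is routine and stays within the paper's toolkit: for $ \alpha\in C_c^\infty(G_\infty)\otimes C_c^\infty(G(\bbA_f))^L $ the Poincar\'e series $ P_{G(k)}\alpha $ does lie in $ \calS^L $ (Theorem \ref{thm:043}, or directly: a smooth, right $ L $-invariant function that is compactly supported modulo $ G(k) $ has all seminorms $ \norm\spacedcdot_{u,-n,c,L} $ finite), and for $ h:=\psi-P_{G(k)}\varphi\in L^1_{\mathrm{loc}}(G(k)\backslash G(\bbA)) $ unfolding gives $ \int_{G(k)\backslash G(\bbA)}h(x)\left(P_{G(k)}\alpha\right)(x)\,dx=\int_{G(\bbA)}h(x)\,\alpha(x)\,dx $, the interchange being justified by the compact support of $ \alpha $; vanishing of the right-hand side for all such $ \alpha $ forces $ h=0 $ almost everywhere. (Equivalently, test against smooth right-$ L $-invariant functions compactly supported modulo $ G(k) $, which are genuine elements of $ \calS^L $.) With this correction the argument is complete and coincides in substance with the paper's proof.
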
 

\begin{proof}
	The image of $ \varphi $ under the continuous $ G(\bbA) $-equivariant linear operator \eqref{eq:084} belongs to $ \left(\calS'\right)_{G(\bbA)\text{-G\aa rding}} $. By Theorem \hyperref[thm:145:2]{\ref*{thm:145}\ref*{thm:145:2}}, this means that $ T_{P_{G(k)}\varphi}\in\calA_{umg}(G(k)\backslash G(\bbA)) $. This implies \ref{prop:088:1}, and the other two claims follow analogously.
\end{proof}

\vskip .2in

Next, we turn our attention to the strong dual $ \calS(G(\bbA))' $, which is a complete locally convex topological vector space by Lemma \ref{lem:019}. One sees easily that the space $ L^1(G(\bbA)) $ maps into $ \calS(G(\bbA))' $ via the continuous injective linear operator $ \varphi\mapsto U_\varphi $, where
\begin{equation}\label{eq:109}
\scal{U_\varphi}f:=\int_{G(\bbA)}\varphi(x)\,f(x)\,dx,\qquad f\in\calS(G(\bbA)).
\end{equation}
The following proposition describes the (absolute) convergence in $ \calS(G(\bbA))' $ of the Poincar\'e series $ P_{G(k)}\varphi $, i.e., of the series
\[ \sum_{\delta\in G(k)}U_{l(\delta)\varphi}, \]
where $ l(\delta)\varphi:=\varphi\left(\delta^{-1}\spacedcdot\right) $.

\vskip .2in

\begin{Prop}\label{prop:091}
	Let $ \varphi\in L^1(G(\bbA)) $. Then, the series $ \sum_{\delta\in G(k)}U_{l(\delta)\varphi} $ converges absolutely in $ \calS(G(\bbA))' $, and we have
	\begin{equation}\label{eq:102}
	\scal{\sum_{\delta\in G(k)}U_{l(\delta)\varphi}}f=\scal{T_{P_{G(k)}\varphi}}{P_{G(k)}f},\qquad f\in\calS(G(\bbA)).
	\end{equation}
\end{Prop}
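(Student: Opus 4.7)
The plan is to establish absolute convergence of $\sum_{\delta\in G(k)}U_{l(\delta)\varphi}$ in the strong dual $\calS(G(\bbA))'$ first, and then verify the identity \eqref{eq:102} by a Fubini argument combined with the unfolding formula \eqref{eq:099}. A bounded subset $B$ of the LF-space $\calS(G(\bbA))$ is contained in, and bounded in, some Fr\'echet subspace $\calS(G(\bbA))^L_\Omega$ (the analogue for $\calS(G(\bbA))$ of Lemma \hyperref[lem:041:3]{\ref*{lem:041}\ref*{lem:041:3}}). Without loss of generality we enlarge $\Omega$ so that $G(k)\Omega=G(\bbA_f)$; we may assume $\Omega\supseteq\Omega_0:=\bigsqcup_{c\in C}cL$ with $C$ as in \eqref{eq:111}. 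Boundedness of $B$ yields, for every $n\in\bbZ_{\geq 0}$, a constant $M_n$ with $|f(x_\infty,x_f)|\leq M_n\|x_\infty\|^{-n}\mathbbm{1}_\Omega(x_f)$ for all $f\in B$. Setting $G_n(y):=\|y_\infty\|^{-n}\mathbbm{1}_\Omega(y_f)$, changing variables $y=\delta^{-1}x$ and applying Tonelli's theorem gives
\begin{equation*}
\sum_{\delta\in G(k)}\|U_{l(\delta)\varphi}\|_B\leq M_n\int_{G(\bbA)}|\varphi(x)|\,P_{G(k)}G_n(x)\,dx\leq M_n\|P_{G(k)}G_n\|_\infty\|\varphi\|_{L^1(G(\bbA))},
\end{equation*}
where the last step uses the $G(k)$-invariance of $P_{G(k)}G_n$ and the total mass estimate $\|P_{G(k)}|\varphi|\|_{L^1(G(k)\backslash G(\bbA))}\leq\|\varphi\|_{L^1(G(\bbA))}$.

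The technical heart of the argument is therefore the boundedness of $P_{G(k)}G_n$ on $G(\bbA)$ for $n$ sufficiently large. By $G(k)$-invariance and the decomposition $G(\bbA)=G(k)(G_\infty\times\Omega_0)$, it suffices to bound $P_{G(k)}G_n(x_\infty,cl)$ for $c\in C$ and $l\in L$. A coset analysis shows that $\{\delta\in G(k):\delta_f cl\in\Omega\}$ is a finite union of cosets $\delta^{(c,c')}\Gamma_{c,L}$ of the congruence subgroup $\Gamma_{c,L}$, indexed by a finite set of $c'$. Using the inequality $\|\delta^{(c,c')}_\infty\gamma_\infty x_\infty\|^{-n}\leq\|\delta^{(c,c')}_\infty\|^n\|\gamma_\infty x_\infty\|^{-n}$ (a consequence of \ref{enum:001:1} and \ref{enum:001:2}) together with Lemma \ref{lem:099} applied to $\Gamma_{c,L}$, one obtains a bound uniform in $x_\infty$ and $l$. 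Since $C$ is finite, $\|P_{G(k)}G_n\|_\infty<\infty$ for $n$ large enough, and the absolute convergence of $\sum_{\delta}U_{l(\delta)\varphi}$ in $\calS(G(\bbA))'$ follows. The completeness of $\calS(G(\bbA))'$ (Lemma \ref{lem:019}) then ensures the series converges in $\calS(G(\bbA))'$.

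To verify \eqref{eq:102}, evaluate the series at $f\in\calS(G(\bbA))^L_\Omega$. The same estimate applied to $|f|\leq M_nG_n$ justifies interchanging sum and integral:
\begin{equation*}
\sum_\delta\langle U_{l(\delta)\varphi},f\rangle=\sum_\delta\int_{G(\bbA)}\varphi(y)f(\delta y)\,dy=\int_{G(\bbA)}\varphi(y)\,P_{G(k)}f(y)\,dy.
\end{equation*}
Since $P_{G(k)}f\in\calS^L$ (Theorem \hyperref[thm:043:1]{\ref*{thm:043}\ref*{thm:043:1}}) is bounded and $G(k)$-invariant and $\varphi\in L^1(G(\bbA))$, the product $\varphi\cdot P_{G(k)}f\in L^1(G(\bbA))$, so the unfolding formula \eqref{eq:099} gives
\begin{equation*}
\int_{G(\bbA)}\varphi(y)\,P_{G(k)}f(y)\,dy=\int_{G(k)\backslash G(\bbA)}P_{G(k)}\varphi(y)\,P_{G(k)}f(y)\,dy=\langle T_{P_{G(k)}\varphi},P_{G(k)}f\rangle,
\end{equation*}
which is \eqref{eq:102}. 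The main obstacle in the whole argument is the uniform boundedness of $P_{G(k)}G_n$ in the second paragraph; everything else is a routine combination of Fubini-Tonelli and the unfolding formula.
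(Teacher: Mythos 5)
Your argument is correct, and it is organized somewhat differently from the paper's. For the absolute convergence, the paper runs a single chained estimate: it writes $\norm{U_{l(\delta)\varphi}}_B$ as a supremum of integrals over $G_\infty\times\delta_f^{-1}cL$, groups $\delta$ into $\Gamma_{c,L}$-cosets via the equivalence \eqref{eq:100}, applies Lemma \ref{lem:099} to the inner sum, and then uses the disjointness of the sets $\delta_f^{-1}cL$ to absorb the remaining sum over coset representatives into a single $\int_{G(\bbA)}\abs{\varphi(x)}\,dx$ --- so it never needs a pointwise bound on a full Poincar\'e series. You instead dominate every $f\in B$ by the majorant $G_n$ and reduce everything to the uniform bound $\sup_{G(\bbA)}P_{G(k)}G_n<\infty$ for $n$ large, which you get from the fact that $\left\{\delta\in G(k):\delta_fc\in\Omega\right\}$ is a finite union of left $\Gamma_{c,L}$-cosets together with \ref{enum:001:1}--\ref{enum:001:2} and Lemma \ref{lem:099} (to make this uniform, take $n\geq N$ using $\norm\spacedcdot\geq1$ and the maximum of the finitely many constants over $c\in C$, $c'\in\Lambda$); this isolates a reusable statement about the Poincar\'e series of $\norm{x_\infty}^{-n}\mathbbm1_\Omega(x_f)$, at the cost of constants involving the representatives $\delta^{(c,c')}$, whereas the paper's estimate is tighter and avoids that intermediate lemma. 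For \eqref{eq:102} the two proofs are mirror images: the paper first sums over $\delta$ on the $\varphi$-side (dominated convergence) to get $\int_{G(\bbA)}\left(P_{G(k)}\varphi\right)(x)f(x)\,dx$ and then applies \eqref{eq:099} to the $L^1$ function $\left(P_{G(k)}\varphi\right)f$, while you first fold $f$ into $P_{G(k)}f\in\calS^L$ (Theorem \hyperref[thm:043:1]{\ref*{thm:043}\ref*{thm:043:1}}) and apply \eqref{eq:099} to $\varphi\cdot P_{G(k)}f$, which is legitimate since $P_{G(k)}f$ is bounded and $G(k)$-invariant; both versions use the standard extension of \eqref{eq:099} from $C_c(G(\bbA))$ to $L^1(G(\bbA))$. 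Two points you leave implicit and should record: convergence of the series in $\calS(G(\bbA))'$ follows from your seminorm estimate together with completeness of the strong dual (Lemma \ref{lem:019}), and evaluating the sum at $f$ term by term is justified because $\mathrm{ev}_f$ is continuous on $\calS(G(\bbA))'$, as $\abs{\scal Uf}=\norm U_{\left\{f\right\}}$.
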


\begin{proof}
	Let $ B $ be a bounded set in $ \calS(G(\bbA)) $. To prove the first claim, we need to prove that
	\[ \sum_{\delta\in G(k)}\norm{U_{l(\delta)\varphi}}_B<\infty. \]
	By Lemma \hyperref[lem:006:4]{\ref*{lem:006}\ref*{lem:006:4}}, there exist an open compact subgroup $ L $ of $ G(\bbA_f) $ and a compact subset $ \Omega=\bigsqcup_{c\in\Lambda}c L $ of $ G(\bbA_f) $ such that $ B $ is a bounded subset of $ \calS(G(\bbA))_\Omega^L $. We note that for all $ c\in\Lambda $ and $ \delta,\delta'\in G(k) $, the following equivalence holds:
	\begin{equation}\label{eq:100}
	\delta_f^{-1}c L=\delta_f'^{-1}c L\quad\Leftrightarrow\quad\Gamma_{c,L}\delta=\Gamma_{c,L}\delta'. 
	\end{equation}
	Next, applying Lemma \ref{lem:099}, let us fix $ N,M\in\bbR_{>0} $ such that
	\begin{equation}\label{eq:101}
	\sum_{\gamma\in\Gamma_{c,L}}\norm{\gamma x}^{-N}\leq M,\qquad x\in G_\infty,\ c\in\Lambda. 
	\end{equation}
	We have
	\begin{align}
		\sum_{\delta\in G(k)}&\norm{U_{l(\delta)\varphi}}_B
		\underset{\eqref{eq:109}}{\overset{\eqref{eq:052}}\leq}\sum_{\delta\in G(k)}\sup_{f\in B}\int_{G_\infty\times\Omega}\abs{\varphi\left(\delta^{-1}x\right)f(x)}\,dx\label{eq:103}\\
		&\underset{\phantom{\eqref{eq:101}}}\leq\sum_{c\in\Lambda}\sum_{\delta\in G(k)}\sup_{f\in B}\int_{G_\infty\times\delta_f^{-1}c L}\abs{\varphi(x)f(\delta x)}\,dx\nonumber\\
		&\overset{\eqref{eq:100}}=\sum_{c\in\Lambda}\sum_{\delta\in\Gamma_{c,L}\backslash G(k)}\sum_{\gamma\in\Gamma_{c, L}}\sup_{f\in B}\int_{G_\infty\times\delta_f^{-1}c L}\abs{\varphi(x)f(\gamma\delta x)}\,dx\nonumber\\
		&\underset{\phantom{\eqref{eq:101}}}\leq\sum_{c\in\Lambda}\sum_{\delta\in\Gamma_{c, L}\backslash G(k)}\sum_{\gamma\in\Gamma_{c,L}}\sup_{f\in B}\int_{G_\infty\times\delta_f^{-1}c L}\abs{\varphi(x)}\norm{\gamma_\infty\delta_\infty x_\infty}^{-N}\norm f_{1,-N}\,dx\nonumber\\
		&\underset{\phantom{\eqref{eq:101}}}=\left(\sup_{f\in B}\norm f_{1,-N}\right)\,\sum_{c\in\Lambda}\sum_{\delta\in\Gamma_{c,L}\backslash G(k)}\int_{G_\infty\times\delta_f^{-1}cL}\abs{\varphi(x)}\sum_{\gamma\in\Gamma_{c,L}}\norm{\gamma_\infty\delta_\infty x_\infty}^{-N}\,dx\nonumber\\
		&\overset{\eqref{eq:101}}\leq\left(\sup_{f\in B}\norm f_{1,-N}\right)M\sum_{c\in\Lambda}\sum_{\delta\in\Gamma_{c,L}\backslash G(k)}\int_{G_\infty\times\delta_f^{-1}c L}\abs{\varphi(x)}\,dx\nonumber\\
		&\overset{\eqref{eq:100}}\leq\left(\sup_{f\in B}\norm f_{1,-N}\right)M\sum_{c\in\Lambda}\int_{G(\bbA)}\abs{\varphi(x)}\,dx\nonumber\\
		&\underset{\phantom{\eqref{eq:101}}}<\infty.\nonumber
	\end{align}
	
	\vskip .2in
	
	To prove \eqref{eq:102}, let $ f\in\calS(G(\bbA)) $. The linear functional $ \mathrm{ev}_f:\calS(G(\bbA))'\to\bbC $, $ U\mapsto\scal Uf $, is continuous since
	\[ \mathrm{ev}_f(U)=\abs{\scal Uf}\overset{\eqref{eq:052}}=\norm U_{\left\{f\right\}},\qquad U\in\calS(G(\bbA))'. \]
	Thus, we have
	\begin{align*}
		\scal{\sum_{\delta\in G(k)}U_{l(\delta)\varphi}}f&\overset{\phantom{\eqref{eq:099}}}=\sum_{\delta\in G(k)}\scal{U_{l(\delta)\varphi}}f\\
		&\underset{\phantom{\eqref{eq:099}}}{\overset{\eqref{eq:109}}=}\sum_{\delta\in G(k)}\int_{G(\bbA)}\varphi\left(\delta^{-1}x\right)f(x)\,dx\\
		&\overset{\phantom{\eqref{eq:099}}}=\int_{G(\bbA)}\left(P_{G(k)}\varphi\right)(x)\,f(x)\,dx\\
		&\overset{\eqref{eq:099}}=\int_{G(k)\backslash G(\bbA)}\sum_{\delta\in G(k)}\left(P_{G(k)}\varphi\right)(x)\,f(\delta x)\,dx\\
		&\overset{\phantom{\eqref{eq:099}}}=\int_{G(k)\backslash G(\bbA)}\left(P_{G(k)}\varphi\right)(x)\,\left(P_{G(k)}f\right)(x)\,dx\\
		&\overset{\phantom{\eqref{eq:099}}}=\scal{T_{P_{G(k)}\varphi}}{P_{G(k)}f},
	\end{align*}
	where the third equality holds by the dominated convergence theorem (its use is justified by the above estimate of the right-hand side of \eqref{eq:103} in the case when $ B=\left\{f\right\} $).
\end{proof}

\section{Proof of Proposition \hyperref[prop:016:1]{\ref*{prop:016}\ref*{prop:016:1}}}\label{sec:049}

This section is devoted to proving Proposition \hyperref[prop:016:1]{\ref*{prop:016}\ref*{prop:016:1}}. The proof is technical and can be skipped on first reading without loss of continuity.

\vskip .2in

\begin{Lem}\label{lem:011}
	Let $ L $ be an open compact subgroup of $ G(\bbA_f) $. Let $ x\in G_\infty $, $ a\in G(\bbA_f) $ and $ f\in\calS^L $. Then:
	\begin{enumerate}[label=\textup{(\arabic*)},leftmargin=*,align=left]
		\item\label{lem:011:1} $ r(x,a)f\in\calS^{aLa^{-1}} $.
		\item\label{lem:011:2} For all $ u\in U(\frakg_\infty) $, $ n\in\bbZ_{\geq0} $ and $ c\in G(\bbA_f) $,
		\[ \norm{r(x,a)f}_{u,-n,c,aLa^{-1}}\leq\norm x^n\norm f_{\Ad\left(x^{-1}\right)u,-n,ca,L}. \] 
	\end{enumerate}
\end{Lem}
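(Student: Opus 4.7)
The plan is to handle the two statements together: the right $aLa^{-1}$-invariance in \ref{lem:011:1} is checked directly, and the finiteness of the relevant seminorms (together with the desired inequality \ref{lem:011:2}) will then follow from a single calculation that rewrites the seminorm of $r(x,a)f$ in terms of a seminorm of $f$.

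First I would verify right invariance: for $l\in L$, using the $L$-invariance of $f$,
\[
(r(x,a)f)\bigl(y,b\cdot ala^{-1}\bigr)=f(yx,bal)=f(yx,ba)=(r(x,a)f)(y,b),
\]
so $r(x,a)f$ is right $aLa^{-1}$-invariant. Smoothness is preserved by right translation, so $r(x,a)f\in C^\infty(G(k)\backslash G(\bbA))^{aLa^{-1}}$.

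Next, for the seminorm estimate, I would reduce to a computation on $G_\infty$. Since $(r(x,a)f)_c(y)=f(yx,ca)=\bigl(r_{G_\infty}(x)f_{ca}\bigr)(y)$, the standard identity for left-invariant differential operators gives
\[
u\circ r_{G_\infty}(x)=r_{G_\infty}(x)\circ\Ad(x^{-1})u,
\]
so $\bigl(u\cdot r(x,a)f\bigr)(y,c)=\bigl((\Ad(x^{-1})u)f_{ca}\bigr)(yx)$. The crucial algebraic point is the identity
\[
\Gamma_{c,aLa^{-1}}=G(k)\cap c(aLa^{-1})c^{-1}=G(k)\cap (ca)L(ca)^{-1}=\Gamma_{ca,L},
\]
which, combined with properties \ref{enum:001:1} and \ref{enum:001:2} of the norm, yields
\[
\norm{zx^{-1}}_{\Gamma_{ca,L}\backslash G_\infty}\le \norm{z}_{\Gamma_{ca,L}\backslash G_\infty}\norm x.
\]

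Finally I would change variables $z=yx$ in the supremum defining $\norm{r(x,a)f}_{u,-n,c,aLa^{-1}}$ and assemble the pieces:
\[
\begin{aligned}
\norm{r(x,a)f}_{u,-n,c,aLa^{-1}}
&=\sup_{y\in G_\infty}\bigl|((\Ad(x^{-1})u)f_{ca})(yx)\bigr|\,\norm{y}_{\Gamma_{ca,L}\backslash G_\infty}^n\\
&=\sup_{z\in G_\infty}\bigl|((\Ad(x^{-1})u)f_{ca})(z)\bigr|\,\norm{zx^{-1}}_{\Gamma_{ca,L}\backslash G_\infty}^n\\
&\le \norm x^n\sup_{z\in G_\infty}\bigl|((\Ad(x^{-1})u)f_{ca})(z)\bigr|\,\norm{z}_{\Gamma_{ca,L}\backslash G_\infty}^n\\
&=\norm x^n\,\norm{f}_{\Ad(x^{-1})u,-n,ca,L}.
\end{aligned}
\]
Since $f\in\calS^L$, the right-hand side is finite for every $u$, $n$, $c$; this simultaneously proves the estimate in \ref{lem:011:2} and shows that all the seminorms defining $\calS^{aLa^{-1}}$ are finite on $r(x,a)f$, completing \ref{lem:011:1}. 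There is no genuine obstacle here; the only point requiring attention is correctly identifying $\Gamma_{c,aLa^{-1}}$ with $\Gamma_{ca,L}$ and bookkeeping the $\Ad$-twist that arises when commuting $u$ past right translation.
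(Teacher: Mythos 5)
Your proof is correct and follows essentially the same route as the paper: the identity $u\circ r(x)=r(x)\circ\Ad\left(x^{-1}\right)u$, the identification $\Gamma_{c,aLa^{-1}}=\Gamma_{ca,L}$, and the norm estimate via \ref{enum:001:1}--\ref{enum:001:2}, with your explicit change of variables $z=yx$ being only a cosmetic rearrangement of the paper's chain of inequalities. The explicit verification of right $aLa^{-1}$-invariance is a detail the paper dismisses as obvious, so nothing differs in substance.
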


\begin{proof}
	Obviously $ r(x,a)f\in C^\infty\left(G(k)\backslash G(\bbA)\right)^{aLa^{-1}} $. Moreover, we have
	\begin{align*}
	\norm{r(x,a)f}_{u,-n,c,aLa^{-1}}
	&=\sup_{y\in G_\infty}\abs{u\left(f_{ca}(\spacedcdot x)\right)(y)}\norm y_{\Gamma_{c,aLa^{-1}}\backslash G_\infty}^n\\
	&=\sup_{y\in G_\infty}\abs{\left(\left(\Ad\left(x^{-1}\right)u\right)f_{ca}\right)(yx)}\norm y^n_{\Gamma_{ca,L}\backslash G_\infty}\\
	&\leq\sup_{y\in G_\infty}\abs{\left(\left(\Ad\left(x^{-1}\right)u\right)f_{ca}\right)(yx)}\norm{yx}^n_{\Gamma_{ca,L}\backslash G_\infty}\norm x^n\\
	&=\norm f_{\Ad\left(x^{-1}\right)u,-n,ca,L}\norm x^n,
	\end{align*}
	where in the second equality we applied the identity
	\[ u\circ r(x)=r(x)\circ\Ad\left(x^{-1}\right)u,\qquad u\in U(\frakg_\infty),\ x\in G_\infty, \]
	where $ r(x):C^\infty(G_\infty)\to C^\infty(G_\infty) $ is the right translation $ \varphi\mapsto \varphi(\spacedcdot x) $.
\end{proof}

\vskip .2in 

\begin{proof}[Proof of Proposition {\hyperref[prop:016:1]{\ref*{prop:016}}}(1)]
	By Lemma \hyperref[lem:011:1]{\ref*{lem:011}\ref*{lem:011:1}} the group $ G(\bbA) $ acts on $ \calS $ by right translations. We need to prove that this action is continuous, i.e, that for every $ (x_0,f_0)\in G(\bbA)\times\calS $,
	\[ \lim_{(y,f)\to(1_{G(\bbA)},f_0)}\left(r(x_0y)f-r(x_0)f_0\right)=0. \]
	Writing the function under the limit sign in the form
	\[ r(x_0)\big(r(y)(f-f_0)+(r(y)f_0-f_0)\big), \]
	we see that it suffices to prove the following three claims:
	\begin{enumerate}[label=\textup{(\arabic*)},leftmargin=*,align=left]
		\item\label{enum:010:1} The linear operator $ r(x):\calS\to\calS $ is continuous for every $ x\in G(\bbA) $.
		\item\label{enum:010:2} The function $ G(\bbA)\times\calS\to\calS $, $ (x,f)\mapsto r(x)f $, is continuous at $ \left(1_{G(\bbA)},0\right) $.		
		\item\label{enum:010:3} The function $ r(\spacedcdot)f:G(\bbA)\to\calS $ is continuous at $ 1_{G(\bbA)} $ for every $ f\in\calS $.
	\end{enumerate}
	
	\vskip .2in 
	
	\ref{enum:010:1} By Lemma \ref{lem:011} the restrictions $ r(x)\big|_{\calS^L}:{\calS^L}\to{\calS^{x_fLx_f^{-1}}} $ are continuous, hence $ r(x) $ is continuous by Lemma \hyperref[lem:041:5]{\ref*{lem:041}\ref*{lem:041:5}}.
	
	\vskip .2in 
	
	\ref{enum:010:2} Let us fix a decreasing sequence $ \left(L_m\right)_{m\in\bbZ_{>0}} $ of open compact subgroups of $ G(\bbA_f) $ such that $ \left\{L_m:m\in\bbZ_{>0}\right\} $ is a neighborhood basis of $ 1_{G(\bbA_f)} $ in $ G(\bbA_f) $ and that
	\begin{equation}\label{eq:012}
	L_{m+1}\subseteq\bigcap_{a\in L_1/L_m}aL_ma^{-1},\qquad m\in\bbZ_{>0}.
	\end{equation}
	We also fix a compact neighborhood $ W $ of $ 1_{G_\infty} $ in $ G_\infty $.
	
	\vskip .2in 
	
	Our first goal is to construct a neighborhood basis of $ 0 $ in $ \calS $. First, we need some notation. Let $ \aconv(A) $ denote the absolutely convex hull of a set $ A\subseteq\calS $ (see Definition \ref{def:042}). Moreover, for every $ m\in\bbZ_{>0} $, let $ \calU_m $ be the family of open balls 
	\[ B_\sigma(\varepsilon):=\left\{f\in\calS^{L_m}:\sigma(f)<\varepsilon\right\}, \]
	where $ \sigma:\calS^{L_m}\to\bbR_{\geq0} $ is a continuous seminorm and $ \varepsilon\in\bbR_{>0} $.
	The family $ \calU_m $ is a neighborhood basis of $ 0 $ in $ \calS^{L_m} $. Thus, applying Lemma \hyperref[lem:006:2b]{\ref*{lem:006}\ref*{lem:006:2b}}, we can define a neighborhood basis $ \calU $ of $ 0 $ in $ \calS $ to consist of the sets
	\[ U_{(\sigma_m),(\varepsilon_m)}:=\aconv\left(\bigcup_{m\in\bbZ_{>0}}B_{\sigma_m}(\varepsilon_m)\right), \]
	where $ (\sigma_m)_{m\in\bbZ_{>0}} $ is a sequence of continuous seminorms $ \sigma_m:\calS^{L_m}\to\bbR_{\geq0} $, and $ (\varepsilon_m)_{m\in\bbZ_{>0}}\subseteq\bbR_{>0} $. A smaller neighborhood basis $ \calU_0 $ of $ 0 $ in $ \calS $ can be obtained by requiring that the seminorms $ \sigma_m $ be finite sums of seminorms
	\[ \norm\spacedcdot_{u,-n,c,L_m},\qquad u\in U(\frakg_\infty),\ n\in\bbZ_{\geq0},\ c\in G(\bbA_f). \]
	
	\vskip .2in
	
	Let $ U=U_{(\sigma_m),(\varepsilon_m)}\in\calU_0 $ be fixed. To prove \ref{enum:010:2}, it suffices to find a set $ U'=U_{(\sigma_m'),(\varepsilon_m')}\in\calU $ such that for all $ (x,a)\in W\times L_1 $, the following implication holds:
	\[ f\in U'\ \Rightarrow\ r(x,a)f\in U, \]
	i.e.,
	\[ f\in\aconv\left(\bigcup_{m\in\bbZ_{>0}}B_{\sigma_m'}\left(\varepsilon_m'\right)\right)\ \Rightarrow\ r(x,a)f\in\aconv\left(\bigcup_{m\in\bbZ_{>0}}B_{\sigma_m}(\varepsilon_m)\right). \]
	We will prove that for a suitable choice of $ \left(\sigma_m'\right) $ and $ \left(\varepsilon_m'\right) $, the following stronger claim holds: for all $ (x,a)\in W\times L_1 $ and $ m\in\bbZ_{>0} $,
	\begin{equation}\label{eq:036}
	f\in B_{\sigma_m'}\left(\varepsilon_m'\right)\ \Rightarrow\ r(x,a)f\in B_{\sigma_{m+1}}(\varepsilon_{m+1}).
	\end{equation}
	Our transition to \eqref{eq:036} is motivated by the following fact: For every choice of $ \sigma_m' $ and $ \varepsilon_m' $, the operators $ r(x,a) $, $ (x,a)\in W\times L_1 $, restrict to operators $ \calS^{L_m}\to\calS^{L_{m+1}} $. Namely, for all $ (x,a)\in W\times L_1 $,
	\[ r(x,a)\calS^{L_m}\overset{\text{Lemma \ref{lem:011}\ref{lem:011:1}}}\subseteq\cal S^{aL_ma^{-1}}\underset{\text{Lemma \ref{lem:015}}}{\overset{\eqref{eq:012}}\subseteq}\cal S^{L_{m+1}}. \]
	
	\vskip .2in
	
	Let us fix $ m\in\bbZ_{>0} $. To finish the proof, we need to find a continuous seminorm $ \sigma_m':\calS^{L_m}\to\bbR_{\geq0} $ and a real number $ \varepsilon_m'>0 $ such that for all $ (x,a)\in W\times L_1 $ and $ f\in\calS^{L_m} $, the following implication holds:
	\begin{equation}\label{eq:038}
	\sigma_m'(f)<\varepsilon_m'\quad\Rightarrow\quad\sigma_{m+1}(r(x,a)f)<\varepsilon_{m+1}. 
	\end{equation}
	Let us suppose without loss of generality that
	\[ \sigma_{m+1}=\norm\spacedcdot_{u,-n,c,L_{m+1}} \]
	for some $ u\in U(\frakg_\infty) $, $ n\in\bbZ_{\geq0} $ and $ c\in G(\bbA_f) $. 
	
	\vskip .2in
	
	Before proceeding, we recall the standard filtration
	\[ \bbC=U^0(\frakg_\infty)\subseteq U^1(\frakg_\infty)\subseteq U^2(\frakg_\infty)\subseteq\ldots \]
	of $ U(\frakg_\infty) $ by finite-dimensional $ G_\infty $-invariant subspaces. Let $ N\in\bbZ_{>0} $ such that $ u\in U^N(\frakg_\infty) $, and let $ \left\{u_i\right\}_{i=1}^d $ be a basis of $ U^N(\frakg_\infty) $. Then,
	\begin{equation}\label{eq:037}
	\Ad\left(x^{-1}\right)u=\sum_{i=1}^d\eta_i(x)u_i,\qquad x\in G_\infty,
	\end{equation}
	for some functions $ \eta_1,\ldots,\eta_d\in C^\infty(G_\infty) $.
	
	\vskip .2in
	
	For all $ (x,a)\in W\times L_1 $ and $ f\in\calS^{L_m} $, we have
	\begin{align*}
	\sigma_{m+1}&(r(x,a)f)=\lVert r(x,a)f\rVert_{u,-n,c,L_{m+1}}\\
	&\overset{\text{Lemma \hyperref[lem:011:2]{\ref*{lem:011}\ref*{lem:011:2}}}}\leq\norm x^n\norm f_{\Ad\left(x^{-1}\right)u,-n,ca,a^{-1}L_{m+1}a}\\
	&\underset{\text{\phantom{Lemma \ref{lem:011}\ref{lem:011:2}}}}{\overset{\eqref{eq:039},\,\eqref{eq:037}}\leq} \norm x^n M_{c,L_{m+1},ca,L_m}^n\norm f_{\sum_{i=1}^d\eta_i(x)u_i,-n,ca,L_m}\\
	&\overset{\phantom{\text{Lemma \ref{lem:011}\ref{lem:011:2}}}}\leq\sum_{i=1}^d\left(\max_{x\in W}\norm x^n\abs{\eta_i(x)}\right)\max_{b\in L_1/L_{m}}M_{c,L_{m+1},cb,L_m}^n\norm f_{u_i,-n,cb,L_m}\\
	&\overset{\phantom{\text{Lemma \ref{lem:011}\ref{lem:011:2}}}}{=:}\sigma_m'(f).
	\end{align*}
	The seminorm $ \sigma_m' $ defined in this way and $ \varepsilon_m':=\varepsilon_{m+1} $ clearly satisfy \eqref{eq:038}. This finishes the proof of \ref{enum:010:2}.
	
	\vskip .2in
	
	\ref{enum:010:3} Fix $ f\in\calS $, and let $ L $ be an open compact subgroup of $ G(\bbA_f) $ such that $f\in\calS^L $. For every $ (x,a)\in G_\infty\times L $, $ r(x,a)f=r(x,1_{G(\bbA_f)})f\in\calS^L $ by Lemma \ref{lem:011}, and for all $ u\in U(\frakg_\infty) $, $ n\in\bbZ_{\geq0} $ and $ c\in G(\bbA_f) $ we have
	\[ \norm{r(x,a)f-f}_{u,-n,c,L}=\norm{r_L(x)f_{c}-f_{c}}_{u,-n,\Gamma_{c,L}}\xrightarrow{(x,a)\to(1_{G_\infty},1_{G(\bbA_f)})}0 \]
	by the continuity of the representation $ r_L $. This proves \ref{enum:010:3}.
\end{proof}

\section{Proof of Proposition \hyperref[prop:044:1]{\ref*{prop:044}}}\label{sec:050}

Proposition \hyperref[prop:044:1]{\ref*{prop:044}\ref*{prop:044:1}} states that $ \left(r',\calS'\right) $ is a continuous represenation of $ G(\bbA) $. We note that this is not immediately obvious:
in general, if $ \pi $ is a continuous representation of a locally compact Hausdorff group $ \calG $ on an LF-space $ V $, its contragredient representation $ \pi' $ on the strong dual $ V' $ need not be continuous (simple counterexamples are regular representations of $ \calG $ on $ L^1(\calG) $, where $ \calG $ is a Lie group of positive dimension \cite[\S4.1.2, p.~224]{warner}). 
However, the closed subrepresentation
\begin{align}
V^\vee&:=\left\{T\in V':\pi'\left(\spacedcdot\right)T:\calG\to V' \text{ is continuous}\right\}\label{eq:112}\\
&=\Cl_{V'}\mathrm{span}_\bbC\left\{\pi'(\varphi)T:\varphi\in C_c(\calG),\ T\in V'\right\}\nonumber
\end{align}
of $ \pi' $ is continuous \cite[4.1.2, pp.~223-224]{warner}.

\begin{proof}[Proof of Proposition {\hyperref[prop:044:1]{\ref*{prop:044}}}]
	\ref{prop:044:1} We will prove that $ \calS^\vee=\calS' $ or equivalently that the function $ r'(\spacedcdot)T:G(\bbA)\to\calS' $ is continuous for every $ T\in\calS' $. Clearly, it suffices to prove that each of these functions is continuous at $ 1_{G(\bbA)} $. 
	
	\vskip .2in
	
	Let $ T\in\calS' $. We need to prove that for every bounded set $ B $ in $ \calS $,
	\[ \norm{r'(x,a)T-T}_B\xrightarrow{(x,a)\to1_{G(\bbA)}}0. \]
	By Lemma \hyperref[lem:041:3]{\ref*{lem:041}\ref*{lem:041:3}} there exists an open compact subgroup $ L $ of $ G(\bbA_f) $ such that $ B $ is a bounded subset of $ \calS^L $. Note that $ r(x,a)f=r(x,1_{G(\bbA_f)})f $ for $ (x,a)\in G_\infty\times L $ and $ f\in B $, hence we have, for all $ (x,a)\in G_\infty\times L $,
	\begin{align*}
	\norm{r'(x,a)T-T}_B&=\sup_{f\in B}\abs{\scal{\vphantom{\Big(}r'(x,a)T-T}f}\\
	&=\sup_{f\in B}\abs{\scal{\vphantom{\Big(}T}{r\left(x^{-1},a^{-1}\right)f-f}}\\
	&=\sup_{f\in B}\abs{\scal{\vphantom{\Big(}T\big|_{\calS^L}}{r_L\left(x^{-1}\right)f-f}}\\
	&=\sup_{f\in B}\abs{\scal{r_L'(x)\left(\vphantom{\Big(}T\big|_{\calS^L}\right)-T\big|_{\calS^L}}f}\\
	&=\norm{r_L'(x)\left(\vphantom{\Big(}T\big|_{\calS^L}\right)-T\big|_{\calS^L}}_{B},
	\end{align*}
	and the right-hand side tends to $ 0 $ as $ (x,a)\to\left(1_{G_\infty},1_{G(\bbA_f)}\right) $ by the continuity of the representation $ r_L' $ (Lemma \ref{lem:022}\ref{lem:022:1}). This proves the claim.
	
	\vskip .2in
	
	\ref{prop:044:2} 
	Let $ \left(\calS'\right)_{G_\infty\textup{-smooth}} $ denote the space of $ G_\infty $-smooth vectors in $ \calS' $, i.e., the space of all $ T\in\calS' $ such that the map $ G_\infty\to\calS' $, $ x\mapsto r'(x,1_{G(\bbA_f)})T $, is smooth. We need to show that $ \left(\calS'\right)_{G_\infty\textup{-smooth}}=\calS' $.
	
	\vskip .2in

	Let $ u\mapsto u^\# $ be the unique $ \bbC $-linear anti-automorphism of the algebra $ U(\frakg_\infty) $ such that $ X^\#=-X $ for all $ X\in\frakg_\infty $. 
	One sees easily that
	\begin{equation}\label{eq:061}
	\scal{r'(u)T}f=\scal T{r\left(u^\#\right)f}
	\end{equation}
	for all $ u\in U(\frakg_\infty) $, $ T\in\left(\calS'\right)_{G_\infty\textup{-smooth}} $ and $ f\in\calS $.
	
	\vskip .2in
	
	By the argument from the proof of \cite[Lemma 1.6.4(1)]{wallach1} (of course, one needs to work with nets instead of sequences),
	the space $ \left(\calS'\right)_{G_\infty\textup{-smooth}} $ is a complete  topological vector space when equipped with the locally convex topology $ \tau $ generated by the seminorms $ \norm\spacedcdot_{B,u}:\left(\calS'\right)_{G_\infty\textup{-smooth}}\to\bbR_{\geq0} $,
	\[ \norm T_{B,u}:=\norm{r'(u)T}_B, \]
	where $ B $ goes over bounded subsets of $ \calS $ and $ u $ goes over $ U(\frakg_\infty) $. 
	But $ \tau $ coincides with the relative topology on  $ \left(\calS'\right)_{G_\infty\textup{-smooth}} $ inherited from $ \calS' $; namely, by \eqref{eq:061} we have that \[ \norm\spacedcdot_{B,u}=\norm\spacedcdot_{r\left(u^\#\right)B}\Big|_{\left(\calS'\right)_{G_\infty\textup{-smooth}}} \] for all $ B $ and $ u $. 
	We note that the set $ r\left(u^\#\right)B $ on the right-hand side is bounded in $ \calS $, or equivalently in $ \calS^L $ for $ L $ such that $ B\subseteq\calS^L $ (see Lemma \hyperref[lem:041:3]{\ref*{lem:041}\ref*{lem:041:3}}), since
	\[ \sup_{f\in r\left(u^\#\right)B}\norm f_{u',-n,c,L}=\sup_{f\in B}\norm {r\left(u^\#\right)f}_{u',-n,c,L}=\sup_{f\in B}\norm f_{u'u^\#,-n,c,L}<\infty \]
	for all $ u'\in U(\frakg_\infty) $, $ n\in\bbZ_{\geq0} $, and $ c\in G(\bbA_f) $. 
	
	\vskip .2in
	
	Thus, $ \left(\calS'\right)_{G_\infty\textup{-smooth}} $ is a complete, hence closed, subspace of $ \calS' $. Since by \cite[Corollary 1 of Lemma 3]{hc} it is also dense in $ \calS' $, it follows that $ \left(\calS'\right)_{G_\infty\textup{-smooth}}=\calS' $.	
\end{proof}

\appendix
\section*{Appendix}\label{appendix}
\renewcommand{\thesection}{A} 
\setcounter{equation}{0}

Here we collect a few facts from functional analysis used in the paper. All vector spaces are assumed to be complex. All locally convex topological vector spaces are assumed to be Hausdorff.

\vskip .2in 

We start by recalling the definition and basic properties of the Gelfand-Pettis integral (e.g., see \cite[\S14]{garrett}).

\vskip .2in

\begin{Thm}\label{thm:148}
	Let $ X $ be a compact Hausdorff space with a Radon measure $ dx $, and let $ E $ and $ F $ be quasi-complete (e.g., complete) locally convex topological vector spaces. Then, for every continuous function $ f:X\to E $ there exists a unique $ \int_Xf(x)\,dx\in E $ (the Gelfand-Pettis integral of $ f $) such that for every continuous linear functional $ T:E\to\bbC $, we have
	\begin{equation}\label{eq:150}
	\scal T{\int_Xf(x)\,dx}=\int_X\scal T{f(x)}\,dx.
	\end{equation}
	Moreover, the following holds:
	\begin{enumerate}[label=\textup{(\arabic*)},leftmargin=*,align=left]
		\item For every continuous linear operator $ A:E\to F $,
		\begin{equation}\label{eq:149}
		A\left(\int_Xf(x)\,dx\right)=\int_XA(f(x))\,dx. 
		\end{equation}
		\item For every continuous seminorm $ \nu:E\to\bbR_{\geq0} $,
		\[ \norm{\int_Xf(x)\,dx}_\nu\leq\int_X\norm{f(x)}_\nu\,dx. \]
	\end{enumerate}
\end{Thm}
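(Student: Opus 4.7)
Uniqueness is immediate: if $I_1,I_2\in E$ both satisfy \eqref{eq:150}, then $\scal T{I_1-I_2}=0$ for every $T\in E'$, so $I_1=I_2$ since continuous linear functionals separate points on the Hausdorff locally convex space $E$. The plan for existence is to build the integral as the limit of a Cauchy net of Riemann sums, invoking quasi-completeness of $E$, and then to derive the two auxiliary properties formally.

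For existence I would work with the directed set $\mathcal P$ of pairs $\alpha=(\{A_i^\alpha\}_{i=1}^{n_\alpha},\{x_i^\alpha\}_{i=1}^{n_\alpha})$, where $\{A_i^\alpha\}$ is a finite Borel partition of $X$ and $x_i^\alpha\in A_i^\alpha$, ordered by refinement, and form the Riemann sums $S_\alpha:=\sum_{i=1}^{n_\alpha}\mu(A_i^\alpha)\,f(x_i^\alpha)\in E$. The first step is to show $(S_\alpha)_{\alpha\in\mathcal P}$ is a Cauchy net in $E$: given a continuous seminorm $\nu$ on $E$ and $\varepsilon>0$, compactness of $X$ together with continuity of $f$ yields a finite open cover $\{U_1,\ldots,U_m\}$ of $X$ with $\nu(f(x)-f(y))<\varepsilon$ whenever $x,y$ share a $U_j$; for partitions $\alpha,\beta$ refining the Borel partition generated by $\{U_j\}$, pairing pieces $A_i^\alpha\cap A_j^\beta$ that sit inside a common $U_j$ bounds $\nu(S_\alpha-S_\beta)$ by $2\varepsilon\,\mu(X)$. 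Every $S_\alpha$ lies in the bounded set $\mu(X)\cdot\overline{\mathrm{conv}}(f(X))$---recall $f(X)$ is compact as a continuous image of $X$, hence bounded, and convex hull and closure preserve boundedness---so quasi-completeness of $E$ produces a limit $I\in E$, which I would declare to be $\int_X f(x)\,dx$. Applying any $T\in E'$ to $S_\alpha$ gives a genuine scalar Riemann sum for $\int_X\scal T{f(x)}\,dx$, and continuity of $T$ together with $S_\alpha\to I$ delivers \eqref{eq:150}.

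Properties (1) and (2) then follow formally. For (1), both $A(\int_Xf(x)\,dx)$ and $\int_XA(f(x))\,dx$ pair with any $S\in F'$ to the common scalar $\int_X\scal{S\circ A}{f(x)}\,dx$, so uniqueness in $F$ (again Hahn--Banach) forces \eqref{eq:149}. For (2), using the Hahn--Banach characterization $\nu(e)=\sup\{|\scal Te|:T\in E',\ |\scal T{\,\cdot\,}|\le\nu\}$ of a continuous seminorm, the claim reduces to the scalar estimate $\bigl|\int_X\scal T{f(x)}\,dx\bigr|\le\int_X|\scal T{f(x)}|\,dx\le\int_X\nu(f(x))\,dx$, which is immediate.

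The step I expect to be most delicate is the Cauchy property of $(S_\alpha)$, because two arbitrary partitions need not be comparable; the cleanest remedy is to restrict attention to the cofinal subnet of partitions subordinate to a chosen open cover of $X$ and lift convergence to all of $\mathcal P$ by cofinality, accompanied by a careful pairing of intersections $A_i^\alpha\cap A_j^\beta$ across refining partitions in order to control the error in a single continuous seminorm $\nu$ uniformly over sample-point choices.
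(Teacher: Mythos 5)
Your argument is correct, but it is a genuinely different route from the one the paper relies on: the paper does not prove Theorem \ref{thm:148} at all---it is recalled as a known fact with a citation to Garrett's book, where the standard proof establishes existence by showing that the closed convex hull of the compact set $f(X)$ is compact in a quasi-complete space and then locating the integral inside $\mu(X)\,\overline{\mathrm{conv}}(f(X))$ by a finite-intersection/compactness argument over finite subsets of $E'$. Your Riemann-sum construction over tagged Borel partitions is a self-contained alternative: the oscillation estimate from a finite cover with $\nu(f(x)-f(y))<\varepsilon$ on each member gives the Cauchy property, the containment $S_\alpha\in\mu(X)\,\mathrm{conv}(f(X))$ supplies the closed bounded set on which quasi-completeness yields convergence, and \eqref{eq:150}, \eqref{eq:149} and the seminorm bound follow formally exactly as you say (the last via the Hahn--Banach description of $\nu$ as a supremum over functionals dominated by $\nu$). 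Two small remarks: the cofinal-subnet maneuver you flag as delicate is unnecessary---once both $\alpha$ and $\beta$ refine the partition $\pi_0$ generated by the cover, any pieces $A_i^\alpha$, $A_j^\beta$ with nonempty intersection lie in a common atom of $\pi_0$, so their tags automatically share a covering set and the Cauchy tail condition holds outright over the whole directed set; and you should record (by the same oscillation estimate applied to the continuous scalar function $T\circ f$) that the scalar Riemann sums do converge to $\int_X\scal T{f(x)}\,dx$, which is the step that turns $S_\alpha\to I$ into \eqref{eq:150}. What your approach buys is an explicit construction needing only elementary measure theory; what the cited compactness approach buys is the sharper localization of the integral in the closed convex hull and no bookkeeping with nets of partitions.
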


\vskip .2in

Next, we recall the definition and basic properties of LF-spaces---strict inductive limits of increasing sequences of Fr\'echet spaces (see e.g. \cite[\S13]{treves}, \cite[\S II.6.3]{schaefer} or \cite[\S12.1]{narici}). 

\vskip .2in 

\begin{Def}\label{def:045}
	Let $ (E_m)_{m\in\bbZ_{>0}} $ be a sequence of Fr\'echet spaces such that $ E_m $ is a closed subspace of $ E_{m+1} $ for every $ m\in\bbZ_{>0} $. The vector space $ E:=\bigcup_{m\in\bbZ_{>0}}E_m $ equipped with the finest locally convex topology with respect to which the inclusion maps $ E_m\hookrightarrow E $ are continuous is called the LF-space with a defining sequence $ (E_m)_{m\in\bbZ_{>0}} $.
\end{Def}

\vskip .2in 

The next lemma uses the notion of bounded sets in a locally convex topological vector space. We recall their definition \cite[Definition 6.1.1 and Theorem 6.1.5]{narici}.

\vskip .2in 

\begin{Def}\label{def:017}
	Let $ E $ be a locally convex topological vector space, and let $ \calC $ be a family of continuous seminorms generating its topology. A subset $ B\subseteq E $ is bounded in $ E $ if the following equivalent conditions hold:
	\begin{enumerate}[label=\textup{(\arabic*)},leftmargin=*,align=left]
		\item For every neighborhood $ U $ of $ 0 $ in $ E $, there exists a $ t_0\in\bbR_{>0} $ such that $ B\subseteq tU $ for all $ t\in\bbC $ such that $ \abs t\geq t_0 $.
		\item $ \sup_{v\in B}\norm v_\rho<\infty $ for all $ \rho\in\calC $.
	\end{enumerate}
\end{Def}

\vskip .2in

We also need the following definition.

\vskip .2in

\begin{Def}\label{def:042}
	Let $ E $ be a vector space.
	The absolutely convex hull of a subset $ A\subseteq E $ is the set
	\[ \aconv(A):=\left\{\sum_{i=1}^na_iv_i:n\in\bbZ_{>0},\ v_i\in A,\ a_i\in\bbC,\ \sum_{i=1}^n\abs{a_i}\leq1\right\}. \]
\end{Def}

\vskip .2in 

\begin{Lem}\label{lem:006}
	Let $ E $ be an LF-space with a defining sequence $ (E_m)_{m\in\bbZ_{>0}} $. Then, we have the following:
	\begin{enumerate}[label=\textup{(\arabic*)},leftmargin=*,align=left]
		\item\label{lem:006:1} The space $ E $ is a complete locally convex topological vector space.
		\item\label{lem:006:2b} For every $ m\in\bbZ_{>0} $, let $ \calU_m $ be a neighborhood basis of $ 0 $ in $ E_m $. Let $ \calU $ be the family of subsets $ U\subseteq E $ of the form
		\[ U=\aconv\left(\bigcup_{m\in\bbZ_{>0}}U_m\right), \]
		where $ U_m\in\calU_m $. Then, $ \calU $ is a neighborhood basis of $ 0 $ in $ E $.
		\item\label{lem:006:3} For every $ m\in\bbZ_{>0} $, $ E_m $ is a closed subspace of $ E $.
		\item\label{lem:006:4} A subset $ B $ of $ E $ is bounded in $ E $ if and only if there exists $ m\in\bbZ_{>0} $ such that $ B\subseteq E_m $ and $ B $ is bounded in $ E_m $.
		\item\label{lem:006:6} Let $ (v_k)_{k\in\bbZ_{>0}}\subseteq E $ and $ v\in E $. Then, $ v_k\to v $ in $ E $ if and only if there exists $ m\in\bbZ_{>0} $ such that $ (v_k)_{k\in\bbZ_{>0}}\subseteq E_m $, $ v\in E_m $, and $ v_k\to v $ in $ E_m $.
		\item\label{lem:006:7} Let $ F $ be a locally convex topological vector space. Then, a linear operator $ A:E\to F $ is continuous if and only if the restrictions $ A\big|_{E_m}:E_m\to F $, $ m\in\bbZ_{>0} $, are continuous.
		\item\label{lem:006:9} A seminorm $ p:E\to\bbR_{\geq0} $ is continuous if and only if the restrictions $ p\big|_{E_m}:E_m\to\bbR_{\geq0} $, $ m\in\bbZ_{>0} $, are continuous.		
	\end{enumerate}
\end{Lem}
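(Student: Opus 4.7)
These are all standard properties of strict LF-spaces (see, e.g., \cite[\S 12]{narici}); the plan is to prove them in the order $(7), (9), (2b), (3), (4), (6), (1)$, reflecting the natural dependencies between the claims.

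Claims $(7)$ and $(9)$ are restatements of the universal property of the inductive limit topology. One direction of $(7)$ is immediate since the inclusions $\iota_m\colon E_m\hookrightarrow E$ are continuous by definition, so each composition $A\circ\iota_m=A|_{E_m}$ inherits continuity; the converse is precisely the defining feature of $E$ as the finest locally convex topology making every $\iota_m$ continuous. Claim $(9)$ is $(7)$ applied to $F=\bbR$ via the Minkowski functional of $\{p<\varepsilon\}$. For $(2b)$, each set $U=\aconv\bigl(\bigcup_m U_m\bigr)$ is absolutely convex and satisfies $U\cap E_m\supseteq U_m$, so its Minkowski functional is continuous on every $E_m$ and hence on $E$ by $(9)$; this shows $U$ is a neighborhood of $0$ in $E$. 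Conversely, any absolutely convex neighborhood $V$ of $0$ in $E$ contains some $U_m\in\calU_m$ in $V\cap E_m$ for each $m$, and the absolutely convex hull of $\bigcup_m U_m$ still lies inside $V$.

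For $(3)$, given $v_0\in E\setminus E_m$, pick $n\geq m$ with $v_0\in E_n$. Since $E_m$ is closed in the Fr\'echet space $E_n$ by hypothesis, Hahn--Banach produces a continuous linear functional $\lambda_n\colon E_n\to\bbC$ with $\lambda_n|_{E_m}=0$ and $\lambda_n(v_0)=1$. Using Hahn--Banach for locally convex spaces, extend $\lambda_n$ inductively to continuous linear functionals $\lambda_{n+j}$ on $E_{n+j}$ for all $j\geq1$, and restrict to $E_k$ for $k<n$; the resulting glued linear functional $\lambda\colon E\to\bbC$ is continuous by $(7)$. Then $\lambda^{-1}\{0\}\supseteq E_m$ is a closed subspace of $E$ not containing $v_0$, so $E_m$ is closed. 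For $(4)$, the ``if'' direction is obvious from continuity of the inclusions. For ``only if'', suppose first that $B\subseteq E$ is bounded yet not contained in any $E_m$: pick $v_k\in B\setminus E_{n_k}$ along an increasing sequence $n_k\to\infty$, apply the extension procedure of $(3)$ to produce functionals $\lambda_k\in E'$ with $\lambda_k(v_k)=k$ and $|\lambda_k|$ controlled on $E_{n_k}$, and assemble them (via $(2b)$) into a continuous seminorm on $E$ unbounded on $B$, contradicting boundedness. If instead $B\subseteq E_m$ but is unbounded in $E_m$, extend a witnessing continuous seminorm on $E_m$ to a continuous seminorm on $E$ by the same step-by-step Hahn--Banach procedure, reaching the same contradiction.

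Once $(4)$ is in hand, $(6)$ is immediate: a convergent sequence is bounded, hence lies in some $E_m$ by $(4)$; by $(3)$ the subspace $E_m$ is closed in $E$ and carries the subspace topology (which agrees with its own Fr\'echet topology, by the open mapping theorem applied to the continuous bijection $E_m\to E_m$ between its two comparable complete topologies), so the limit lies in $E_m$ and convergence holds there. Claim $(1)$ follows by the same mechanism: a Cauchy net in $E$ is bounded, hence eventually lies in some $E_m$ by $(4)$, where it converges by Fr\'echet completeness to a limit in $E_m\subseteq E$. \textbf{The main obstacle is $(4)$}, the regularity of the strict inductive limit: the diagonal Hahn--Banach construction stitching compatible functionals or absolutely convex neighborhoods across the increasing chain $(E_k)$ is the technical heart of the proof, and claims $(6)$ and $(1)$ depend essentially on it.
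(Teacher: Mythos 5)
The paper itself disposes of this lemma by citation (Narici--Beckenstein, Theorems 12.1.1, 12.1.3, 12.1.7, 12.1.10, 12.2.2), so a self-contained proof like yours is a legitimately different route, and parts of it are sound: (7), (9), (2b) and (3) are essentially correct, and your proof of the hard direction of (4) when $B$ is contained in no $E_m$ --- functionals $\lambda_k\in E'$ vanishing on $E_{k}$ with $\lambda_k(v_k)=k$, glued into the seminorm $\sup_k\abs{\lambda_k}$, which is continuous by (9) because on each $E_m$ only finitely many $\lambda_k$ survive --- is a clean and correct argument for regularity.

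There are, however, two genuine gaps. First, your proof of completeness (1) fails at the first step: a Cauchy \emph{net} in a locally convex space need not be bounded (already in $\bbR$ a convergent net can have unbounded range), and since a strict LF-space with $E_m\subsetneq E_{m+1}$ is never metrizable you cannot reduce to sequences; so you cannot invoke (4) to push a Cauchy net into some $E_m$. Completeness of strict inductive limits is the Dieudonn\'e--Schwartz/K\"othe theorem, and its standard proof (e.g.\ \cite[Theorem 13.1]{treves}) works with Cauchy filters and a separate construction of neighborhoods; it is not a corollary of regularity. Second, both (6) and the second case of (4) rest on the fact that $E$ induces on $E_m$ its original Fr\'echet topology, and your justification --- the open mapping theorem applied to the identity between the two comparable topologies on $E_m$ --- is invalid: the subspace topology on $E_m$ is not known at that point to be metrizable or complete, so no Fr\'echet open mapping theorem applies, and a continuous bijection between comparable complete non-metrizable topologies need not be a homeomorphism. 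What is actually needed is the standard one-step extension lemma: if $V$ is an absolutely convex neighborhood of $0$ in $E_m$ and $W$ is an absolutely convex neighborhood of $0$ in $E_{m+1}$ with $W\cap E_m\subseteq V$, then $U:=\aconv(V\cup W)$ satisfies $U\cap E_m=V$; iterating this up the chain and using (2b) shows both that $E_m$ is a topological subspace of $E$ and that every continuous seminorm on $E_m$ extends to a continuous seminorm on $E$. That lemma, not Hahn--Banach (which extends linear functionals, not seminorms), is the tool your ``step-by-step'' extension in the second case of (4) tacitly requires. With the extension lemma supplied and with (1) either proved by the filter argument or simply cited, the rest of your outline goes through.
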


\begin{proof}
	The claim \ref{lem:006:1} holds because $ E $ is locally convex by definition (it is Hausdorff by \cite[Theorem 12.1.3(b)]{narici}) and complete by \cite[Theorem 12.1.10]{narici}; 
	\ref{lem:006:2b} follows from \cite[Theorems 12.1.1 and 4.2.11]{narici};
	\ref{lem:006:3} holds by \cite[Theorem 12.1.3(a)]{narici}, \ref{lem:006:4} by \cite[Theorem 12.1.7(a)]{narici}; 
	\ref{lem:006:6} by \cite[Theorem 12.1.7(b)]{narici}, 
	\ref{lem:006:7} by \cite[Theorem 12.2.2]{narici}, 
	and \ref{lem:006:9} follows easily from the definition of topology on $ E $.
\end{proof} 

\vskip .2in 

Next, we recall the notion of the strong dual of a locally convex topological vector space \cite[\S IV.5]{schaefer} (see also \cite[Theorem 3.18]{rudin}).

\vskip .2in 

\begin{Def}\label{def:018}
	Let $ E $ be a locally convex topological vector space. The strong dual $ E' $ of $ E $ is the space of continuous linear functionals $ E\to\bbC $ equipped with the locally convex topology generated by the seminorms $ \norm\spacedcdot_B:E'\to\bbR_{\geq0} $,
	\begin{equation}\label{eq:052}
	 \norm T_B:=\sup_{f\in B}\abs{\scal Tf} 
	\end{equation}
	where $ B $ goes over all bounded sets in $ E $.
\end{Def}

\vskip .2in 

\begin{Lem}[{\cite[Corollary 2 of Theorem 32.2]{treves}}]\label{lem:019}
	Let $ E $ be a metrizable locally convex topological vector space (e.g., a Fr\'echet space) or an LF-space. Then, the strong dual $ E' $ is complete.
\end{Lem}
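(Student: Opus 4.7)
The plan is to handle the metrizable case first and then reduce the LF-space case to it using the defining-sequence structure. In both situations we begin the same way: let $(T_\alpha)$ be a net in $E'$ that is Cauchy for the strong topology. Applying the Cauchy condition to the bounded singleton $\{v\}\subseteq E$ shows that $(\langle T_\alpha,v\rangle)$ is Cauchy in $\bbC$ for every $v\in E$, hence converges to a scalar $T(v)$; linearity of $T:E\to\bbC$ follows from that of each $T_\alpha$. For any bounded set $B\subseteq E$ and $\varepsilon>0$, pick $\alpha_0$ so that $\norm{T_\alpha-T_\beta}_B<\varepsilon$ for all $\alpha,\beta\geq\alpha_0$; letting $\beta$ pass to the limit gives $\norm{T_\alpha-T}_B\leq\varepsilon$. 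Thus $T_\alpha\to T$ strongly, provided we can show $T\in E'$.

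For the metrizable case I would exploit the fact that a linear functional on a metrizable locally convex space is continuous as soon as it is sequentially continuous at $0$. Accordingly, let $(v_n)\subseteq E$ with $v_n\to0$. Then $B:=\{v_n:n\in\bbZ_{>0}\}\cup\{0\}$ is bounded in $E$, and the uniform convergence established above produces, for each $\varepsilon>0$, some $\alpha_0$ with $\sup_n\abs{T_{\alpha_0}(v_n)-T(v_n)}<\varepsilon/2$. The continuity of the single functional $T_{\alpha_0}$ then yields $\abs{T_{\alpha_0}(v_n)}<\varepsilon/2$ for all sufficiently large $n$, so $\abs{T(v_n)}<\varepsilon$. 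Hence $T(v_n)\to0$, so $T$ is sequentially continuous at $0$ and therefore continuous.

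For the LF-space case I would write $E=\bigcup_{m\in\bbZ_{>0}}E_m$ with defining Fr\'echet sequence $(E_m)$. By Lemma \hyperref[lem:006:7]{\ref*{lem:006}\ref*{lem:006:7}}, $T\in E'$ if and only if each restriction $T|_{E_m}$ lies in $E_m'$. Since bounded sets in $E_m$ are bounded in $E$ (the inclusion $E_m\hookrightarrow E$ being continuous), restriction defines a continuous linear map $E'\to E_m'$ for the strong topologies; in particular $(T_\alpha|_{E_m})$ is strongly Cauchy in $E_m'$ with pointwise limit $T|_{E_m}$. Applying the already-proved Fr\'echet (hence metrizable) case to each $E_m$ yields $T|_{E_m}\in E_m'$ for every $m$, so $T\in E'$ as required.

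The main obstacle is the final step of the metrizable case, namely extracting continuity of $T$ from mere pointwise convergence that is uniform on bounded sets. The argument uses metrizability essentially: once to reduce continuity to sequential continuity at $0$, and once implicitly to guarantee that probing by convergent sequences is enough. Without this, there is no automatic way to promote pointwise, uniformly-bounded convergence into continuity of the limit; this is precisely why the lemma is stated for metrizable or LF-spaces, and why the LF-case is best handled by reducing to its metrizable pieces $E_m$.
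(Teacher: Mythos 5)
Your argument is correct. Note, however, that the paper does not prove this lemma at all: it is quoted directly from Treves (Corollary 2 of Theorem 32.2), so what you have produced is a self-contained substitute for that citation rather than a variant of an argument in the text. Your route is the elementary one: pointwise limits of a strongly Cauchy net exist because singletons are bounded, the limit functional is approximated uniformly on every bounded set, continuity in the metrizable case is extracted via sequential continuity at $0$ (using that a null sequence together with $0$ is bounded and that first countability makes sequences suffice), and the LF case is reduced to the Fr\'echet pieces through Lemma \hyperref[lem:006:7]{\ref*{lem:006}\ref*{lem:006:7}} together with the observation that restriction $E'\to E_m'$ is strongly continuous. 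The standard textbook route is slightly different and more structural: metrizable spaces and LF-spaces are bornological, and the strong dual of a bornological space is complete, because a functional that is a uniform-on-bounded-sets limit of continuous functionals is bounded on bounded sets and boundedness already implies continuity on a bornological space; Treves packages the metrizable case inside the theory of DF-spaces. Your proof trades that machinery for a direct verification, at the cost of running the continuity argument twice (once for sequences, once through the defining sequence $(E_m)$), which is a perfectly sound exchange; the bornological argument buys a single uniform statement covering both cases at once. One small streamlining available to you even within your framework: in the metrizable case you could skip sequences entirely by noting $\norm{T}_B\leq\norm{T_{\alpha_0}}_B+\varepsilon<\infty$ for every bounded $B$, and then invoking the fact that metrizable locally convex spaces are bornological, so boundedness on bounded sets gives continuity of $T$ directly.
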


\vskip .2in 

\begin{Lem}\label{lem:020}
	Let $ E_1,\ldots,E_n $ be locally convex topological vector spaces and for each $ i\in\left\{1,\ldots,n\right\} $, let $ \iota_i $ be the canonical inclusion $ E_i\to\bigoplus_{i=1}^n E_i $. Let us equip the direct sums $ \bigoplus_{i=1}^nE_i $ and $ \bigoplus_{i=1}^nE_i' $ with product topologies. Then, we have the following:
	\begin{enumerate}[label=\textup{(\arabic*)},leftmargin=*,align=left]
		\item\label{lem:020:1} A subset $ B $ of $ \bigoplus_{i=1}^nE_i $ is bounded if and only if $ B\subseteq \bigoplus_{i=1}^n B_i $ for some bounded subsets $ B_i $ of $ E_i $.
		\item\label{lem:020:2} The linear operator
		\[ \Psi:\left(\bigoplus_{i=1}^nE_i\right)'\to\bigoplus_{i=1}^nE_i',\qquad T\mapsto\left(T\circ\iota_i\right)_{i=1}^n, \]
		is an isomorphism of topological vector spaces.
	\end{enumerate}
\end{Lem}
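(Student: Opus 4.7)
For part \ref{lem:020:1}, the plan is to use the continuous canonical projections $p_j:\bigoplus_{i=1}^n E_i\to E_j$ and inclusions $\iota_j:E_j\to\bigoplus_{i=1}^n E_i$. In the forward direction, given a bounded $B\subseteq\bigoplus_{i=1}^n E_i$, set $B_j:=p_j(B)$; each $B_j$ is bounded in $E_j$ since continuous linear maps send bounded sets to bounded sets, and by construction $B\subseteq\bigoplus_{i=1}^n B_i$. Conversely, if $B\subseteq\bigoplus_{i=1}^n B_i$ with each $B_i$ bounded in $E_i$, then for any continuous seminorm $\rho$ on $\bigoplus_{i=1}^n E_i$ (which is dominated by a seminorm of the form $v\mapsto\sum_{i=1}^n\rho_i(p_i(v))$ for continuous seminorms $\rho_i$ on $E_i$, by the definition of the product topology), one has $\sup_{v\in B}\rho(v)\leq\sum_{i=1}^n\sup_{v_i\in B_i}\rho_i(v_i)<\infty$, applying Definition \ref{def:017}.

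For part \ref{lem:020:2}, the plan is to verify in turn that $\Psi$ is well-defined, linear, bijective, and a homeomorphism. Well-definedness and linearity are immediate from the continuity and linearity of the $\iota_i$. Injectivity follows from the decomposition $v=\sum_{i=1}^n\iota_i(p_i(v))$ for $v\in\bigoplus_{i=1}^n E_i$: if $T\circ\iota_i=0$ for all $i$, then $T(v)=\sum_i T(\iota_i(p_i(v)))=0$. For surjectivity, given $(S_i)_{i=1}^n\in\bigoplus_{i=1}^n E_i'$, define $T:\bigoplus_{i=1}^n E_i\to\bbC$ by $T(v):=\sum_{i=1}^n S_i(p_i(v))$; then $T=\sum_{i=1}^n S_i\circ p_i$ is a sum of continuous linear functionals, hence continuous, and $T\circ\iota_j=S_j$ since $p_i\circ\iota_j=\delta_{ij}\,\mathrm{id}_{E_j}$.

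It remains to check that $\Psi$ and $\Psi^{-1}$ are continuous. For $\Psi$, the topology on $\bigoplus_{i=1}^n E_i'$ is generated by seminorms of the form $(T_i)_{i=1}^n\mapsto\sum_{i=1}^n\norm{T_i}_{B_i}$ with each $B_i$ bounded in $E_i$; since $\iota_i(B_i)$ is bounded in $\bigoplus_{i=1}^n E_i$ (by part \ref{lem:020:1}), we have
\[ \sum_{i=1}^n\norm{T\circ\iota_i}_{B_i}=\sum_{i=1}^n\norm{T}_{\iota_i(B_i)}, \]
which is a continuous seminorm on $\bigl(\bigoplus_{i=1}^n E_i\bigr)'$. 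For $\Psi^{-1}$, given $B$ bounded in $\bigoplus_{i=1}^n E_i$, use part \ref{lem:020:1} to write $B\subseteq\bigoplus_{i=1}^n B_i$ with each $B_i$ bounded; then for every $T\in\bigl(\bigoplus_{i=1}^n E_i\bigr)'$ and every $v=\sum_{i=1}^n\iota_i(v_i)\in B$ with $v_i\in B_i$,
\[ \abs{\scal{T}{v}}\leq\sum_{i=1}^n\abs{\scal{T\circ\iota_i}{v_i}}\leq\sum_{i=1}^n\norm{T\circ\iota_i}_{B_i}, \]
so $\norm{T}_B\leq\sum_{i=1}^n\norm{T\circ\iota_i}_{B_i}$, which is a continuous seminorm on $\bigoplus_{i=1}^n E_i'$. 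I expect the only mildly subtle point to be keeping the strong-dual and product-topology seminorms straight in these last two estimates; everything else is essentially bookkeeping from the universal property of finite direct sums in the category of locally convex topological vector spaces.
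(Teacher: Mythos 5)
Your proof is correct and follows essentially the same route as the paper, which simply cites \cite[I.5.5]{schaefer} for part (1) and declares part (2) an elementary linear isomorphism whose homeomorphism property follows from (1). You have merely written out the routine seminorm estimates and the decomposition $v=\sum_i\iota_i(p_i(v))$ that the paper leaves to the reader, and these details are all accurate.
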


\begin{proof}
	\ref{lem:020:1} This is a special case of \cite[I.5.5]{schaefer}.
	
	\ref{lem:020:2} It is elementary and well-known that $ \Psi $ is a linear isomorphism. It follows easily from \ref{lem:020:1} that it is also a homeomorphism.
\end{proof}

\end{document}